\def\eqref#1{Eq.~\ref{#1}}
\def\peqref#1{\ref{#1}}
\def\1{\bm{1}}
\def\vc{{\bm{c}}}
\def\vh{{\bm{h}}}
\def\vp{{\bm{p}}}
\def\vq{{\bm{q}}}
\def\vw{{\bm{w}}}
\def\vx{{\bm{x}}}
\def\vy{{\bm{y}}}
\def\vz{{\bm{z}}}
\def\mA{{\bm{A}}}
\def\mB{{\bm{B}}}
\def\mC{{\bm{C}}}
\def\mH{{\bm{H}}}
\def\mN{{\bm{N}}}
\def\mQ{{\bm{Q}}}
\DeclareMathAlphabet{\mathsfit}{\encodingdefault}{\sfdefault}{m}{sl}
\SetMathAlphabet{\mathsfit}{bold}{\encodingdefault}{\sfdefault}{bx}{n}
\def\gI{{\mathcal{I}}}
\def\sI{{\mathbb{I}}}
\DeclareMathOperator*{\argmax}{arg\,max}
\DeclareMathOperator*{\argmin}{arg\,min}
\newcommand{\cO}{\mathcal{O}}
\newcommand{\RR}{\mathbb{R}}
\newcommand{\la}{\langle}
\newcommand{\ra}{\rangle}
\newcommand{\norm}[1]{\left\lVert#1\right\rVert}
\numberwithin{equation}{section}
\theoremstyle{plain}
\newtheorem{theorem}{Theorem}[section]
\newtheorem{proposition}[theorem]{Proposition}
\newtheorem{lemma}[theorem]{Lemma}
\newtheorem{corollary}[theorem]{Corollary}
\theoremstyle{definition}
\newtheorem{example}{Example}[section]
\newtheorem{assumption}[theorem]{Assumption}
\theoremstyle{remark}
\newtheorem{remark}[theorem]{Remark}
\renewcommand{\d}[1]{\ensuremath{\operatorname{d}\!{#1}}}
\setlist[itemize]{itemsep=0pt, topsep=0pt}
\setlist[enumerate]{itemsep=-2pt, topsep=0pt}
\providecommand{\keywords}[1]
{
  \small	
  \textbf{\textit{Keywords---}} #1
}
\title{Primal-Dual Damping algorithms for optimization}
\author{Xinzhe Zuo$^{\star}$,  Stanley Osher$^{\star}$,  Wuchen Li$^{\dagger}$\ \ \\
\textsuperscript{$\star$} Department of Mathematics, University of California, Los Angles\\
\texttt{\{zxz, sjo\}@math.ucla.edu}\\
\textsuperscript{$\dagger$} Department of Mathematics, University of South Carolina  \\
 \texttt{wuchen@mailbox.sc.edu}\\
}
\date{}
\begin{document}

\maketitle

\begin{abstract}  
    We propose an unconstrained optimization method based on the well-known primal-dual hybrid gradient (PDHG) algorithm. We first formulate the optimality condition of the unconstrained optimization problem as a saddle point problem. We then compute the minimizer by applying generalized primal-dual hybrid gradient algorithms. Theoretically, we demonstrate the continuous-time limit of the proposed algorithm forms a class of second-order differential equations, which contains and extends the heavy ball ODEs and Hessian-driven damping dynamics. Following the Lyapunov analysis of the ODE system, we prove the linear convergence of the algorithm for strongly convex functions. Experimentally, we showcase the advantage of algorithms on several convex and non-convex optimization problems by comparing the performance with other well-known algorithms, such as Nesterov's accelerated gradient methods. In particular, we demonstrate that our algorithm is efficient in training two-layer and convolution neural networks in supervised learning problems. 
\end{abstract}\hspace{10pt}
\keywords{ Optimization; Primal-dual hybrid gradient algorithms; Primal-dual damping dynamics.}

\section{Introduction}
 Optimization is one of the essential building blocks in many applications, including scientific computing and machine learning problems. One of the classical algorithms for unconstrained optimization problems is the gradient descent method, which updates the state variable in the negative gradient direction at each step \citep{boyd2004convex}. Nowadays, accelerated gradient descent methods have been widely studied. Typical examples include Nesterov's accelerated gradient method \citep{nesterov1983method}, Polyak's heavy ball method \citep{polyak1964some}, and Hessian-driven damping methods \citep{chen2019first,attouch2019fast,attouch2020first,attouch2021convergence}.

On the other hand, some first-order methods are introduced to solve linear-constrained optimization problems. Typical examples include the primal-dual hybrid gradient (PDHG) method \citep{chambolle2011first} and the alternating direction method of multipliers (ADMM) \citep{boyd2011distributed,GABAY197617}. They are designed to solve an inf-sup saddle point type problem, which updates the gradient descent direction for the minimization variable and applies the gradient ascent direction for the maximization variable. Both PDHG and ADMM are designed for solving optimization problems with affine constraints. \citet{ouyang2015accelerated} proposed accelerated linearized ADMM, which incorporates a multi-step acceleration scheme into linearized ADMM. 
Recently, the PDHG method has been extended into solving nonlinear-constrained minimization problems \citep{valkonen2014primal}. 


In this paper, we study a general class of accelerated first-order methods for unconstrained optimization problems. We reformulate the original optimization problem into an inf-sup type saddle point problem, whose saddle point solves the optimality condition. We then apply a linearized preconditioned primal-dual hybrid gradient algorithm to compute the proposed saddle point problem. 

The main description of the algorithm is as follows. Consider the following inf-sup problem for a $\mathcal{C}^2$ strongly convex function $f$ over $\RR^d$ \begin{equation}\label{eq:pdd_formulation_regu}
    \inf_{\vx\in\mathbb{\RR}^d} \sup_{\vp\in\mathbb{\RR}^d} \quad \la \nabla f(\vx), \vp \ra - \frac{\varepsilon}{2}\norm{\vp}^2 \,,
\end{equation}
where $\vp$ is a constructed ``dual variable'', $\varepsilon>0$ is a constant, $\la\cdot,\cdot\ra$ is an Euclidean inner product, and $\|\cdot\|$ is an Euclidean norm. 
We later prove that the solution to the saddle point problem \peqref{eq:pdd_formulation_regu} gives the global minimum of $f$. We propose a linearized preconditioned PDHG algorithm for solving the above inf-sup problem: 
\begin{subequations}\label{eq:discrete_linearized_pdhg}
\begin{align}
    \vp^{n+1} &= \vp^n + \sigma \mA(\vx^n) \nabla f(\vx^n)-\sigma \varepsilon \mA(\vx^n) \vp^{n+1} \,,\\
    \tilde{\vp}^{n+1} &= \vp^{n+1} + \omega(\vp^{n+1}-\vp^n)\,, \\
    \vx^{n+1} &= \vx^n - \tau \mC(\vx^n) \tilde{\vp}^{n+1}\,,
\end{align}
\end{subequations}
where $n=1, 2,\cdots$ is the iteration step, $\tau$, $\sigma>0$ are stepsizes for the updates of $\vx$, $\vp$,  respectively, and $\omega>0$ is a parameter. In the above algorithm, $\mC(\vx^n) = \mB(\vx^n) \nabla^2 f(\vx^n)$, where $\mA(\vx^n)\in\mathbb{\RR}^{d\times d}$, and $\mB(\vx^n)\in\mathbb{\RR}^{d\times d}$ act as preconditioners on the updates of $\vp^{n+1}$ and $\vx^{n+1}$, respectively. This paper will only focus on the simple case where $\mA(\vx) = A \sI$ for some constant $A>0$. Although there is a second-order term $\nabla^2 f(\vx^n)$ in the update of $\vx^{n+1}$ (hidden in $\mC(\vx^n)$), our algorithm is still a first-order algorithm by choosing $\mB(\vx^n) \nabla^2 f(\vx^n) = \mC(\vx^n)$ for some $\mC$ that is easy to compute. For example, we test that $\mC = \sI$ is a very good choice in most of our numerical examples. See empirical choices of parameters in our numerical sections. 

Our method forms a class of ordinary differential equation systems
in terms of $(\vx, \vp)$ in the continuous limit $\tau$, $\sigma\rightarrow 0$. We call it the primal-dual damping (PDD) dynamics.  We show that the PDD dynamics form a class of second-order ODEs, which contains and extends the inertia Hessian-driven damping dynamics \citep{chen2019first,attouch2019fast}. Theoretically, we analyze the convergence property of PDD dynamics. If $f$ is a quadratic function of $\vx$, with constant $\mA$, $\mB$, the PDD dynamic satisfies a linear ODE system. Under suitable choices of parameters, we obtain a similar convergence acceleration in heavy ball ODE \citep{siegel2019accelerated}. Moreover, for general nonlinear function $f$, we have the following informal theorem characterizing the convergence speed of our algorithm: 
\begin{theorem}[Informal]\label{thm:informal}
    Let $f:\RR^d \to \RR $ be a $\mathcal{C}^4$ strongly convex function. Let $\vx^*$ be the global minimum of $f$ and $\vp^* = 0$. 
    Then, the iteration $(\vx^n,\vp^n)$ produced by \peqref{eq:discrete_linearized_pdhg} converges to the saddle point $(\vx^*,\vp^*)$ if $\tau$, $\sigma$, are small enough. Moreover, 
{
\begin{equation*}
  \|\vp^n\|^2+\|\nabla f(\vx^n)\|^2\leq (\|\vp^0\|^2+\|\nabla f(\vx^0)\|^2) (1- \frac{\mu^2}{M+\delta})^n,    
\end{equation*}
where $\mu=\min_{\vx}\lambda_{\min}(\nabla^2 f(\vx)\mC(\vx))$, $\mC(\vx) = \mB(\vx) \nabla^2 f(\vx)$, $\delta>0$ depends on the initial condition, 
and $M>0$ depends on $\mC(\vx)^T\big(\nabla^3 f(\vx) \nabla f(\vx)+(\nabla^2 f(\vx))^2\big)\mC(\vx)$, $\tau$, $\sigma$, $A$, $\varepsilon$, and $\omega$. The detailed version is given in Theorem \ref{thm:restatement}.
}
\end{theorem}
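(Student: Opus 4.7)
The plan is to set up a discrete Lyapunov analysis using $V_n := \|\vp^n\|^2 + \|\nabla f(\vx^n)\|^2$, which is exactly the functional whose decay is claimed and which vanishes only at the saddle point $(\vx^*, 0)$. After rewriting the three updates in \peqref{eq:discrete_linearized_pdhg} with $\mA(\vx) = A\sI$ so that the increments are explicit,
\[
\vp^{n+1}-\vp^n = \sigma A(\nabla f(\vx^n) - \varepsilon \vp^{n+1}), \quad \tilde{\vp}^{n+1} = (1+\omega)\vp^{n+1} - \omega\vp^n, \quad \vx^{n+1}-\vx^n = -\tau\mC(\vx^n)\tilde{\vp}^{n+1},
\]
the core task is to upper bound $V_{n+1} - V_n$ by $-\frac{\mu^2}{M+\delta}V_n$.

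For the dual piece I would expand $\|\vp^{n+1}\|^2 - \|\vp^n\|^2 = 2\langle\vp^n, \vp^{n+1}-\vp^n\rangle + \|\vp^{n+1}-\vp^n\|^2$, whose leading linear contribution produces the damping $-2\sigma\varepsilon A\langle\vp^n,\vp^{n+1}\rangle$ and the coupling $+2\sigma A\langle\vp^n,\nabla f(\vx^n)\rangle$. For the primal piece I would Taylor-expand to second order around $\vx^n$,
\[
\nabla f(\vx^{n+1}) = \nabla f(\vx^n) - \tau\nabla^2 f(\vx^n)\mC(\vx^n)\tilde{\vp}^{n+1} + \tfrac{\tau^2}{2}\nabla^3 f(\vx^n)\bigl[\mC(\vx^n)\tilde{\vp}^{n+1}, \mC(\vx^n)\tilde{\vp}^{n+1}\bigr] + O(\tau^3),
\]
and square. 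This yields the leading coupling $-2\tau\langle\nabla f(\vx^n), \nabla^2 f(\vx^n)\mC(\vx^n)\tilde{\vp}^{n+1}\rangle$, a $\tau^2$ self-term controlled by the largest eigenvalue of $\mC^T(\nabla^2 f)^2\mC$, and a $\tau^2$ remainder featuring $\mC^T\nabla^3 f(\vx^n)\nabla f(\vx^n)\mC$; these are exactly the two terms inside the bracket that defines $M$.

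Summing and collecting in orders of $\tau,\sigma$, the dual coupling $+2\sigma A\langle\vp^n,\nabla f(\vx^n)\rangle$ and the primal coupling $-2\tau\langle\nabla f(\vx^n),\nabla^2 f\mC\tilde{\vp}^{n+1}\rangle$ must be balanced through the extrapolation weight $\omega$ and the stepsize ratio, mirroring the continuous-time Hessian-damping cancellation that underlies the PDD ODE analysis in the paper. Once these cross-terms are arranged to be nonpositive, the remaining dissipation is of the form $-c_1 \sigma\varepsilon A\|\vp^n\|^2 - c_2\tau \|\nabla^2 f(\vx^n)\mC(\vx^n)\vp^n\|^2$, and the second piece is bounded below by $c_2\tau\mu^2\|\vp^n\|^2$ by the definition $\mu = \min_\vx \lambda_{\min}(\nabla^2 f(\vx)\mC(\vx))$. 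To translate this dissipation of $\|\vp^n\|^2$ into dissipation of $V_n$, I would exploit the near-equilibrium identity $\nabla f(\vx^n) = \varepsilon\vp^{n+1} + \sigma^{-1}A^{-1}(\vp^{n+1}-\vp^n)$ from the first update, which relates $\|\nabla f(\vx^n)\|$ to $\|\vp^n\|$ up to increments that are themselves controlled by $V_n-V_{n+1}$, producing the squared rate $\mu^2$ advertised in the theorem.

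The main obstacle, which pushes this beyond a routine computation, is controlling the third-derivative remainder uniformly along the trajectory: since $f$ is only assumed $\mathcal{C}^4$ and strongly convex, $\|\nabla^3 f\|$ cannot be bounded globally, so a bootstrap argument is needed in which the iterates are shown to remain inside a sublevel set of $V_0$—this is precisely the origin of the initial-condition-dependent constant $\delta$. With that invariance, $\nabla^3 f$, $\nabla^2 f$, and $\nabla f$ are uniformly bounded along the orbit, giving the explicit dependence of $M$ on $\mC^T(\nabla^3 f\,\nabla f + (\nabla^2 f)^2)\mC$, $\tau$, $\sigma$, $A$, $\varepsilon$, and $\omega$ as stated. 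Combining the leading-order dissipation $\geq \mu^2 V_n$ with the remainder $\leq M\,V_n$ and rearranging then produces $V_{n+1}\le (1-\mu^2/(M+\delta))V_n$, and the claimed geometric rate follows by induction, which also closes the bootstrap by guaranteeing $V_n\le V_0$ for all $n$.
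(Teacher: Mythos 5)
Your overall architecture --- the Lyapunov functional $\gI=\tfrac{1}{2}(\|\nabla f\|^2+\|\vp\|^2)$, a one-step Taylor expansion of $\gI$, and an induction that keeps $\gI(\vx^n,\vp^n)\le \gI(\vx^0,\vp^0)$ so that a higher-order remainder stays controlled (the origin of $\delta$) --- is the same as the paper's (Theorems \ref{thm:discrete_exp_decay_2} and \ref{thm:restatement}). However, the quantitative core of your argument fails in two places. First, the dissipation structure you claim does not exist. Substituting $\tilde{\vp}^{n+1}=\frac{(1-\gamma\varepsilon A)\vp^n+(\sigma+\gamma)A\nabla f(\vx^n)}{1+\sigma\varepsilon A}$ (with $\gamma=\sigma\omega$) into your expansion, the order-$\tau$ terms are: a primal dissipation $-\frac{2\tau(\sigma+\gamma)A}{1+\sigma\varepsilon A}\la \nabla f,\nabla^2 f\,\mC\,\nabla f\ra$ (linear in $\mC_0=\nabla^2 f\,\mC$ and acting on $\nabla f$, not on $\vp$), a dual dissipation of size roughly $2\sigma\varepsilon A\|\vp^n\|^2$, and cross terms of the form $\la \nabla f,\cdot\,\vp^n\ra$. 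Your term $-c_2\tau\|\nabla^2 f\,\mC\,\vp^n\|^2$ never appears at order $\tau$: quantities quadratic in $\mC_0$ arise only at order $\tau^2$, and with a \emph{plus} sign --- they belong to the constant $M$, not to the dissipation. Consequently your route to the rate $\mu^2$, via $\|\mC_0\vp^n\|^2\ge\mu^2\|\vp^n\|^2$, collapses, and the ``near-equilibrium identity'' you invoke to transfer dissipation from $\vp$ to $\nabla f$ is patching a problem created by this misidentification. In the paper, $\mu^2$ has a different origin: the symmetrized drift matrix satisfies $\mH\succeq\frac{\mu}{4}\sI$ (Corollary \ref{cor:H}), the second-order term is bounded by $\tau^2(M_2+\delta)\gI$, and choosing $\tau\propto\mu/(M_2+\delta)$ makes the per-step decay $\tau\lambda-\tau^2(M_2+\delta)\sim\mu^2/(M_2+\delta)$; the square comes from stepsize times dissipation rate, not from squaring an eigenvalue bound.

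Second, the cross terms cannot simply be ``arranged to be nonpositive'' by tuning $\omega$ and the stepsize ratio: the primal cross term carries the matrix $\tau(1-\gamma\varepsilon A)\,\mC_0$ while the dual one carries the scalar matrix $\sigma A\,\sI$, and these cannot cancel unless $\mC_0$ is a multiple of the identity. The paper's actual mechanism (Lemma \ref{lemma:matrix_CS} together with Corollary \ref{cor:H}) dominates the residual cross terms by sacrificing part of the diagonal dissipation, and this works only under the two-sided bound $\mu\sI\preceq\mC_0\preceq L\sI$ of Assumption \ref{assumption:mu_L} and for the specific choices $\varepsilon=1$, $\gamma=\frac{1-\sigma\mu}{\mu}$, $A=\frac{\mu+L}{2+(\mu+L)\varepsilon\gamma}$; this is the technical heart of the proof, and your outline assumes it away. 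A smaller discrepancy: the paper does not bootstrap local bounds on $\nabla^3 f$; it \emph{assumes} the global bound of Assumption \ref{assumption:L'} and finiteness of $D_0$, and uses the induction $\gI(\vx^n,\vp^n)<\gI(\vx^0,\vp^0)$ only to keep the cubic remainder $\tau^3 D_0\|\mN\|_2^3\,\gI^{1/2}\gI$ below $\tau^2\delta\,\gI$, which is exactly where $\delta$ and condition \peqref{condition:x0p0} enter. Your compactness-based bootstrap (sublevel sets of $\|\nabla f\|$ are compact for strongly convex $f$) could substitute for those global assumptions and would be a genuine strengthening, but then $M$, not only $\delta$, would depend on the initial condition, which is not what the statement asserts.
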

Numerically, we test the algorithms in both convex and non-convex optimization problems. In convex optimization, we demonstrate the fast convergence results of the proposed algorithm with selected preconditioners, compared with the gradient descent method, Nesterov accelerated gradient method, and Heavy ball damping method. This justifies the convergence analysis. We also test our algorithm for several well-known non-convex optimization problems. Some examples, such as the Rosenbrock and Ackley functions, demonstrate the potential advantage of our algorithms in converging to the global minimizer. In particular, we compare our algorithms with the stochastic gradient descent method, Adam, for training two-layer and convolutional neural network functions in supervised learning problems. This showcases the potential advantage of the proposed methods in terms of convergence speed and test accuracy.

PDHG has been widely used in linear-constrained optimization problems \citep{chambolle2011first}. Recently, \citet{valkonen2014primal} applied the PDHG for nonlinearly constrained optimization problems. They proved the asymptotic convergence for the nonlinear coupling saddle point problems. It is different from our PDHG algorithm for computing unconstrained optimizations. And we show the linear convergence for a particular nonlinear coupling saddle point problem. Meanwhile, Nesterov accelerated gradient methods and Hessian damping algorithms can also be formulated in both discrete-time updates and continuous-time second-order ODEs.  \citet{wibisono2016variational} also introduced the idea of Bregman Lagrangian to study a family of accelerated methods in continuous time limit. It forms a nonlinear second-order ODE. 
Compared to them, the PDD algorithm induces a generalized second-order ODE system, which contains both heavy ball ODE \citep{siegel2019accelerated} and Hessian damping dynamics \citep{chen2019first,attouch2019fast,attouch2020first,attouch2021convergence}. 
For example, when $C=\sI$, algorithm  \eqref{eq:discrete_linearized_pdhg} can be viewed as the other time discretization of Hessian damping dynamics \citep{chen2019first,attouch2019fast}. It provides a different update in discrete time update. We only evaluate the gradient of $f$ once, whereas Attouch's algorithm \citep{attouch2020first} evaluates the gradient of $f$ twice. In numerical experiments, we demonstrate that the proposed algorithm outperforms Nesterov accelerated methods and Hessian-driven damping methods in some non-convex optimization problems, including supervised learning problems for training neural network functions. 

Our work is also related to preconditioning, an important technique in numerical linear algebra \citep{trefethen2022numerical} and numerical PDEs \citep{rees2010preconditioning,park2021preconditioned}. In general, preconditioning aims to reduce the condition number of some operators to improve convergence speed. One famous example would be preconditioning gradient descent by the inverse of the Hessian matrix, which gives rise to Newton's method. In recent years, preconditioning techniques have also been developed in training neural networks \cite{osher2022laplacian,kingma2014adam}. Adam \citep{kingma2014adam} is arguably one of the most popular optimizers in training deep neural networks. It can also be viewed as a preconditioned algorithm using a diagonal preconditioner that approximates the diagonal of the Fisher information matrix \citep{pascanu2013revisiting}. Shortly after \citet{chambolle2011first} developed PDHG for constrained optimization, the same authors also studied preconditioned PDHG method \citep{chambolle2011diagonal}, in which they developed a simple diagonal preconditioner that can guarantee convergence without the need to compute step sizes. \citet{liu2021acceleration} proposed non-diagonal preconditioners for PDHG and showed close connections between preconditioned PDHG and ADMM. \citet{park2021preconditioned} studied the preconditioned Nesterov's accelerated gradient method and proved convergence in the induced norm. \citet{jacobs2019solving} introduced a preconditioned norm in the primal update of the PDHG method and improved the step size restriction of the PDHG method. 

Our paper is organized as follows. In Section \ref{section:pdd_for_optimization} we provide some background and derivations of our algorithm. We also provide the ODE formulations for our primal-dual damping dynamics. In Section \ref{section:Lyapunov_analysis}, we prove our main convergence results for the algorithm. In Section \ref{section:numerics} we showcase the advantage of our algorithm through several convex and non-convex examples. In particular, we show that our algorithm can train neural networks and is competitive with commonly used optimizers, such as SGD with Nesterov's momentum and Adam. We conclude in Section \ref{section:discussion} with more discussions and future directions.  

\section{Primal-dual damping algorithms for optimizations}\label{section:pdd_for_optimization}
In this section, we first review PDHG algorithms for constrained optimization problems. We then construct a saddle point problem for the unconstrained optimization problem and apply the preconditioned PDHG algorithm to compute the proposed saddle point problem. We last derive an ODE system, which takes the limit of stepsizes in the PDHG algorithm. It forms a second-order ODE, which generalizes the Hessian-driven damping dynamics. We analyze the convergence properties of the ODE system for quadratic optimization problems.     

\subsection{Review PDHG for constrained optimization}

In \citet{chambolle2011first}, the following saddle point problem was considered: 
\begin{equation}
    \min_{x\in X} \max_{y\in Y} \la Kx,y\ra + G(x) - F^*(y) \,,\label{eq:saddle_point_problem}
\end{equation}
where $X$ and $Y$ are two finite-dimensional real vector spaces equipped with inner product $\la\cdot,\cdot \ra$ and norm $\|\cdot\| = \la\cdot,\cdot \ra^{1/2}$. The map $K:X\to Y$ is a continuous linear operator. $G:X\to [0,+ \infty]$ and $F^*:Y\to [0,+\infty]$ are proper, convex, lower semi-continuous (l.s.c.) functions. $F^*$ is the convex conjugate of a convex l.s.c. function $F$. It is straightforward to verify that \peqref{eq:saddle_point_problem} is the primal-dual formulation of the nonlinear primal problem 
$$ \min_{x\in X} F(Kx) + G(x)\,. 
$$
Then the PDHG algorithm for saddle point problem \peqref{eq:saddle_point_problem} is given by 
\begin{subequations}
\begin{align}
    y^{n+1} &= (I + \sigma \partial F^*)^{-1}(y^n + \sigma K \tilde{x}^n)\,, \\
    x^{n+1} &= (I + \tau \partial G)^{-1}(x^n + \tau K^* y^{n+1})\,, \\
    \tilde{x}^{n+1} &= x^{n+1} + \omega(x^{n+1}-x^n)\,,
\end{align}\label{eq:original_PDHG}
\end{subequations}
where $I$ is the identity operator and $(I + \sigma \partial F)^{-1}$ is the resolvent operator, which is defined the same way as the proximal operator
\begin{align*}
     (I+\tau \partial F)^{-1}(y) &= \argmin_x \frac{\|x-y\|^2}{2\tau} + F(x) \\
    &= \textrm{prox}_{\tau F}(y) 
\end{align*}
When $\omega = 1$, \citet{chambolle2011first} proved convergence if $\tau\sigma \|K\|^2<1$, where $\|\cdot\|$ is the induced operator norm. It is worth noting that the convergence analysis requires that $K$ is a linear operator. 
\subsection{Saddle point problem for unconstrained optimization}
We consider the problem of minimizing a $\mathcal{C}^2$ strongly convex function $f:\RR^d \to \RR$ over $\RR^d$. Instead of directly solving for $\nabla f(\vx^*) = 0$, we consider the following saddle point problem: 
\begin{equation}\label{eq:pdd_formulation}
    \inf_{\vx \in \RR^d} \sup_{\vp\in\RR^d} \quad\la \nabla f(\vx), \vp \ra \,,
\end{equation}
due to the following proposition. 
\begin{proposition}\label{prop:saddle_optimality}
    Let $f:\RR^d \to \RR$ be a $\mathcal{C}^2$ strongly convex function. Then the saddle point to \peqref{eq:pdd_formulation} is the unique global minimum of $f$. 
\end{proposition}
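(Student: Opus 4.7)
The plan is to characterize the saddle point $(\vx^*,\vp^*)$ of the bilinear-in-$\vp$ Lagrangian $L(\vx,\vp):=\langle \nabla f(\vx),\vp\rangle$ by writing out the two defining saddle inequalities
\[
L(\vx^*,\vp)\le L(\vx^*,\vp^*)\le L(\vx,\vp^*)\qquad\forall\,\vx,\vp\in\RR^d,
\]
and analyzing each separately.

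First I would exploit the sup-inequality. Since $\vp\mapsto L(\vx^*,\vp)=\langle\nabla f(\vx^*),\vp\rangle$ is linear on all of $\RR^d$, it is bounded above only when $\nabla f(\vx^*)=0$. Strong convexity of $f$ then forces $\vx^*$ to be the unique global minimizer of $f$ (this is the standard first-order optimality for strongly convex functions). This step also gives $L(\vx^*,\vp^*)=0$.

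Second, I would use the inf-inequality to pin down $\vp^*$. The inequality $L(\vx^*,\vp^*)\le L(\vx,\vp^*)$ says that $\vx^*$ is an interior minimizer of the smooth map $\vx\mapsto\langle\nabla f(\vx),\vp^*\rangle$, so the first-order condition gives $\nabla^2 f(\vx^*)\vp^*=0$. Strong convexity means $\nabla^2 f(\vx^*)$ is positive definite, hence invertible, so $\vp^*=0$. (An equivalent perturbation argument: if $\vp^*\neq 0$, set $\vx=\vx^*-t\vp^*$ and Taylor expand to obtain $\langle\nabla f(\vx),\vp^*\rangle=-t\langle\nabla^2 f(\vx^*)\vp^*,\vp^*\rangle+o(t)<0$ for small $t>0$, contradicting $L(\vx,\vp^*)\ge 0$.)

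Finally I would verify that the candidate $(\vx^*,0)$ is in fact a saddle point: $L(\vx^*,\vp)=\langle 0,\vp\rangle=0=L(\vx^*,0)$ and $L(\vx,0)=0=L(\vx^*,0)$, so both saddle inequalities hold with equality. Uniqueness of $\vx^*$ then yields uniqueness of the saddle point. I do not expect a serious obstacle here; the only subtlety is the second step, where one must remember to invoke strong convexity through positive definiteness of $\nabla^2 f(\vx^*)$ rather than merely through uniqueness of the critical point.
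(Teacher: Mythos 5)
Your proof is correct and takes essentially the same route as the paper's: both reduce the problem to the stationarity conditions $\nabla f(\vx^*)=0$ and $\nabla^2 f(\vx^*)\vp^*=0$, and then invoke strong convexity (positive definiteness of $\nabla^2 f(\vx^*)$) to conclude $\vp^*=0$ and that $\vx^*$ is the unique global minimizer. Your write-up is somewhat more careful than the paper's one-line argument---you derive the stationarity conditions from the two saddle inequalities (linearity in $\vp$ forcing boundedness, interior minimality in $\vx$) instead of ``directly differentiating,'' and you also verify that $(\vx^*,0)$ is in fact a saddle point, an existence step the paper leaves implicit.
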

\begin{proof}
    Directly differentiating \peqref{eq:pdd_formulation} and setting the derivatives to 0 yields 
    \begin{align*}
        \nabla f(\vx^*) &= 0 \,,\\
        \nabla^2 f(\vx^*) \vp^* &= 0 \,.
    \end{align*}
By the strong convexity of $f$, we obtain that $\vx^*$ is the unique global minimum and $\vp^* = 0$. 
\end{proof}
 Recall that $\vp^*=0$ by the optimality condition. Thus we make the following change to our saddle point formulation.
We add a regularization term in \peqref{eq:pdd_formulation}:
\begin{equation}\label{eq:saddle_regu}
    \inf_{\vx\in \RR^d} \sup_{\vp\in \RR^d} \quad \la \nabla f(\vx), \vp \ra - \frac{\varepsilon}{2}\norm{\vp}^2 \,,
\end{equation}
where $\varepsilon>0$ is a constant. This regularization term further drives $\vp$ to $0$. Similar to Proposition \ref{prop:saddle_optimality}, we have the following proposition
\begin{proposition}
    Let $f:\RR^d \to \RR$ be a $\mathcal{C}^2$ strongly convex function. Then the saddle point to \peqref{eq:saddle_regu} is the unique global minimum of $f$. 
\end{proposition}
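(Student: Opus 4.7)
The plan is to mimic the argument of Proposition~\ref{prop:saddle_optimality} with the minor modification produced by the quadratic regularizer. Writing $L(\vx,\vp) = \la \nabla f(\vx), \vp\ra - \frac{\varepsilon}{2}\|\vp\|^2$, I will characterize any saddle point $(\vx^*,\vp^*)$ through its first-order necessary conditions, and then exploit strong convexity of $f$ to pin it down uniquely.

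First I would differentiate $L$ with respect to $\vp$, which gives the clean stationarity condition
\begin{equation*}
    \nabla f(\vx^*) - \varepsilon \vp^* = 0,
\end{equation*}
so $\vp^* = \nabla f(\vx^*)/\varepsilon$. Next I would differentiate with respect to $\vx$, obtaining
\begin{equation*}
    \nabla^2 f(\vx^*)\,\vp^* = 0.
\end{equation*}
Since $f$ is $\mathcal{C}^2$ strongly convex, $\nabla^2 f(\vx^*)$ is positive definite, hence invertible, forcing $\vp^* = 0$. Substituting back into the first stationarity condition gives $\nabla f(\vx^*) = 0$, and by strong convexity this identifies $\vx^*$ as the unique global minimum of $f$. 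So the candidate saddle point is $(\vx^*,0)$.

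To finish, I would verify the full saddle inequality $L(\vx^*,\vp)\le L(\vx^*,\vp^*)\le L(\vx,\vp^*)$ for all $\vx,\vp\in\RR^d$, rather than relying on first-order conditions alone. Because $\nabla f(\vx^*) = 0$, we have $L(\vx^*,\vp) = -\frac{\varepsilon}{2}\|\vp\|^2 \le 0 = L(\vx^*,0)$, giving the right-to-max inequality with equality iff $\vp=0$; and because $\vp^* = 0$, we have $L(\vx,0) = 0 = L(\vx^*,\vp^*)$ for every $\vx$, giving the left-to-min inequality trivially. Thus $(\vx^*,0)$ is a saddle point, and the preceding derivation shows any saddle point must take this form, so the saddle point is unique.

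I do not expect any real obstacle here, since the proof is essentially a one-line extension of Proposition~\ref{prop:saddle_optimality}; the only thing to be careful about is that the outer minimization inequality $L(\vx,\vp^*)\ge L(\vx^*,\vp^*)$ degenerates to $0\ge 0$ because $\vp^*=0$, so establishing uniqueness of the saddle point requires using the maximization side (which pins down $\vp^*$ as a strongly concave quadratic) together with strong convexity of $f$, rather than trying to argue from the outer minimization directly. An alternative and perhaps cleanest presentation is to solve the inner supremum explicitly to get $\sup_\vp L(\vx,\vp) = \tfrac{1}{2\varepsilon}\|\nabla f(\vx)\|^2$, whose infimum is $0$, attained uniquely at the global minimizer of $f$ by strong convexity.
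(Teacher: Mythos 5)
Your proof is correct and takes essentially the same approach as the paper's: differentiate the saddle objective, use positive definiteness of $\nabla^2 f(\vx^*)$ (from strong convexity) to force $\vp^* = 0$, conclude $\nabla f(\vx^*) = 0$, and invoke strong convexity for uniqueness. Your extra verification of the full saddle inequality and the explicit inner-supremum remark go slightly beyond the paper, which stops at the first-order conditions, but the core argument is identical.
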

\begin{proof}
    Directly differentiating \peqref{eq:saddle_regu} and setting derivatives to 0 yields 
    \begin{align*}
        \nabla f(\vx^*) &= \varepsilon \vp^*\, ,\\
        \nabla^2 f(\vx^*) \vp^* &= 0\, .
    \end{align*}
    Since $f$ is strongly convex, we have $\nabla^2 f(\vx^*) \succ 0$ and the second equation implies $\vp^* = 0$. Then the first equation implies $\nabla f(\vx^*) =0$. Since $f$ is strongly convex, we conclude that $\vx^*$ is the unique global minimum.
\end{proof}
\subsection{PDHG for unconstrained optimization}
We apply the scheme given by \peqref{eq:original_PDHG} to the saddle point problem \peqref{eq:saddle_regu} (set $F=G=0$ and identify $K\vx = \nabla f(\vx)$ in \peqref{eq:saddle_point_problem}). Thus,  
\begin{subequations}\label{eq:pdd_pdhg}
\begin{align}
    \vp^{n+1} &= \argmax_{\vp} \quad \la \nabla f(\vx^n), \vp \ra - \frac{\varepsilon}{2}\norm{\vp}^2 - \frac{\norm{\vp - \vp^n}^2_{\mA(\vx^n)^{-1}}}{2\sigma}\,, \\
     \widetilde{\vp}^{n+1} &= \omega(\vp^{n+1} - \vp^n) + \vp^{n+1}\,, \label{eq:pdd_pdhg_2}\\
    \vx^{n+1} &=\argmin_{\vx} \quad \la \nabla f(\vx),\widetilde{\vp}^{n+1} \ra + \frac{\norm{\vx - \vx^n}^2_{\mB(\vx^n)^{-1}}}{2\tau}\,,\label{eq:pdd_pdhg_3}
\end{align}
\end{subequations}
where we have added symmetric positive definite matrices $\mA(\vx^n)$, $\mB(\vx^n)\in\mathbb{R}^{d\times d}$, as preconditioners for updates of $\vp$, $\vx$, respectively. We also denote the norm $\norm{\vh}^2_{\mA^{-1}}$ as 
$\vh^T\mA^{-1}\vh$, where $\vh\in\mathbb{R}^d$. 

As mentioned, the convergence analysis of PDHG relies on the assumption that $K$ is a linear operator. So we can not apply the same convergence analysis to \peqref{eq:pdd_pdhg} since $\nabla f(\vx)$ is not necessarily linear in $\vx$. By taking the optimality conditions of \peqref{eq:pdd_pdhg}, we find that $\vp^{n+1}$ and $\vx^{n+1}$ solves 
\begin{subequations} \label{eq:pdd_pdhg_system}
\begin{align}
    \vp^{n+1} + \sigma\varepsilon  \mA(\vx^n) \vp^{n+1} -\vp^n - \sigma \mA(\vx^n) \nabla f(\vx^n) &= 0\,, \label{eq:p_updates} \\
    \tau \mB(\vx^n) \nabla^2f(\vx^{n+1}) \big[(1+\omega)\big(\sI + \sigma \varepsilon \mA(\vx^n)\big)^{-1}\sigma \mA(\vx^n) \nabla f(\vx^n) \nonumber \\ 
    +\big((\omega+1)\big(\sI + \sigma \varepsilon \mA(\vx^n)\big)^{-1}-\omega \sI \big) \vp^n \big] + (\vx^{n+1} - \vx^n) &= 0\ ,  \label{eq:x_updates_2}
\end{align}
\end{subequations}
where we substitute the update \eqref{eq:pdd_pdhg_2} into update \eqref{eq:x_updates_2}. We use $\sI$ to represent the identity matrix in \eqref{eq:x_updates_2}. 

Note that the update for $\vx^{n+1}$ in \eqref{eq:x_updates_2} is implicit, unless $\nabla^2 f(\vx)$ does not depend on $\vx$. We also remark that the update for $\vx^{n+1}$ in \eqref{eq:x_updates_2} will be explicit if we perform a gradient step instead of a proximal step in \eqref{eq:pdd_pdhg_3}. To be more precise, when $\mB = \sI$, the linearized version of \eqref{eq:pdd_pdhg_3} can be written as 
$$ \vx^{n+1} = \mathrm{prox}_{\tau \la \nabla f(\cdot),\widetilde{\vp}^{n+1} \ra} (\vx^n)\,. 
$$
Taking a gradient step instead of proximal step yields
\begin{equation}\label{eq:pdd_pdhg_3_explicit}
    \vx^{n+1} = \vx^n - \tau  \nabla^2 f(\vx^n)\widetilde{\vp}^{n+1} 
\end{equation}
For general choice of preconditioner $\mB(\vx^n)$,  the linearized version of \eqref{eq:pdd_pdhg_3} satisfies
 \begin{equation*}
    \vx^{n+1} = \vx^n - \tau  \mB(x^n)\nabla^2 f(\vx^n)\widetilde{\vp}^{n+1}= \vx^n - \tau \mC(x^n)\widetilde{\vp}^{n+1}. 
\end{equation*}
Here we always denote a matrix function $\mC$, such that
\begin{equation*}
    \mC(\vx^n):=\mB(\vx^n)\nabla^2f(\vx^n). 
\end{equation*}
For simplicity of presentation, we only consider the simple case where $\mA(\vx^n) = A \sI $ for some $A>0$. We now summarize the linearized update \eqref{eq:pdd_pdhg_system} into the following algorithm. 
\begin{algorithm}
\caption{Linearized Primal-Dual Damping Algorithm}\label{alg:pdd}
\begin{algorithmic}
\Require Initial guesses $\vx^0\in \mathbb{R}^d$, $\vp^0\in\mathbb{R}^d$; Stepsizes $\tau>0$, $\sigma>0$; Parameters $A>0$, $\varepsilon>0$, $\omega>0$, $\mC\succ 0$.  
\While{$n=1,2,\cdots,$ not converge}
\State $\vp^{n+1} = \frac{1}{1+\sigma \varepsilon A}\vp^n + \frac{\sigma A}{1+\sigma \varepsilon A}\nabla f(\vx^n);$
 \State   $\tilde{\vp}^{n+1} = \vp^{n+1} + \omega(\vp^{n+1}-\vp^n);$
 \State   $\vx^{n+1} = \vx^n - \tau \mC(\vx^n) \tilde{\vp}^{n+1};$ 
\EndWhile
\end{algorithmic}
\end{algorithm}

We note that Algorithm \ref{alg:pdd} and update \eqref{eq:pdd_pdhg_system} are different methods for solving saddle point problem \eqref{eq:pdd_formulation}. In this paper, we focus on the computation and analysis of Algorithm \ref{alg:pdd}. 
\subsection{PDD dynamics}

An approach for analyzing optimization algorithms is by first studying the continuous limit of the algorithm using ODEs \citep{su2015differential,siegel2019accelerated,attouch2019fast}. The advantage of doing so is that ODEs provide insights into the convergence property of the algorithm. 

We first reformulate the proposed algorithm \eqref{eq:pdd_pdhg_system} into a first-order ODE system. 
\begin{proposition}
    As $\tau,\sigma\to 0$ and $\sigma\omega \to \gamma$, both updates in \peqref{eq:pdd_pdhg_system} and Algorithm \ref{alg:pdd} can be formulated as a discrete-time update of the following ODE system. 
\begin{subequations}\label{eq:p_x_updates_ode_regu}
\begin{align}
    \Dot{\vp} &= \mA(x) \nabla f(\vx) - \varepsilon \mA(x)\vp \,,\label{eq:p_updates_ode_regu} \\
    \Dot{\vx} &= -\mC(\vx)(\vp + \gamma (\mA(x) \nabla f(\vx) - \varepsilon \mA(x)\vp)) \,,\label{eq:x_updates_ode_regu}
\end{align}
\end{subequations}
 where $\mC(\vx)=\mB(\vx)\nabla^2f(\vx)$ and the initial condition satisfies $\vx(0)=\vx^0$, $\vp(0)=\vp^0$. {Suppose that $\nabla f$ is Lispchitz continuous and each index in matrix $\mA$, $\mC$ is continuous and bounded. Then, there exists a unique solution for the ODE system \eqref{eq:p_x_updates_ode_regu}.}  A stationary state $(\vx^*, \vp^*)$ of ODE system \eqref{eq:p_x_updates_ode_regu} satisfies 
\begin{equation*}
\nabla f(\vx^*)=0, \quad \vp^*=0.
\end{equation*}
\end{proposition}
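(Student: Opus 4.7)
The plan is to verify the three claims of the proposition in sequence: consistency of the discrete scheme with the ODE, well-posedness of the ODE, and characterization of its stationary state.

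For the ODE derivation, I would rewrite the $\vp$-step of the implicit system \eqref{eq:pdd_pdhg_system} as
\begin{equation*}
\frac{\vp^{n+1}-\vp^n}{\sigma} \;=\; \mA(\vx^n)\nabla f(\vx^n) - \varepsilon \mA(\vx^n)\vp^{n+1},
\end{equation*}
so that sending $\sigma \to 0$ collapses the forward difference to $\dot{\vp}$ and yields \eqref{eq:p_updates_ode_regu}; the corresponding step of Algorithm \ref{alg:pdd} is the special case $\mA = A\sI$. Substituting the resulting identity $\vp^{n+1}-\vp^n = \sigma(\mA(\vx^n)\nabla f(\vx^n) - \varepsilon \mA(\vx^n)\vp^{n+1})$ into the over-relaxation step gives $\widetilde{\vp}^{n+1} = \vp^{n+1} + \sigma\omega(\mA(\vx^n)\nabla f(\vx^n) - \varepsilon \mA(\vx^n)\vp^{n+1})$, which tends to $\vp + \gamma(\mA(\vx)\nabla f(\vx) - \varepsilon \mA(\vx)\vp)$ as $\sigma\omega \to \gamma$. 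Dividing $\vx^{n+1} - \vx^n = -\tau \mC(\vx^n)\widetilde{\vp}^{n+1}$ by $\tau$ and sending $\tau \to 0$ then produces \eqref{eq:x_updates_ode_regu}.

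For existence and uniqueness I would invoke the Picard--Lindel\"of theorem on $\RR^{2d}$ applied to the right-hand side of \eqref{eq:p_x_updates_ode_regu}. Under the stated hypotheses the maps $\vx \mapsto \mA(\vx)\nabla f(\vx)$, $(\vx,\vp) \mapsto \mA(\vx)\vp$, and $(\vx,\vp)\mapsto \mC(\vx)\vp$ are each locally Lipschitz (each is a product of a bounded continuous matrix-valued function and a Lipschitz vector-valued function), and the full right-hand side is a linear combination of these. Global existence then follows from the at-most-linear growth of the vector field in $\vp$ combined with Gr\"onwall's inequality. For the stationary state, setting both components to zero: the first equation gives $\mA(\vx^*)(\nabla f(\vx^*) - \varepsilon \vp^*) = 0$, which forces $\nabla f(\vx^*) = \varepsilon \vp^*$ since $\mA \succ 0$. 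Substituting back into the second component reduces it to $\mC(\vx^*)\vp^* = \mB(\vx^*)\nabla^2 f(\vx^*)\vp^* = 0$; invertibility of both $\mB$ and $\nabla^2 f$ (the latter by strong convexity of $f$) yields $\vp^* = 0$, and then $\nabla f(\vx^*) = 0$.

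The main obstacle is making the continuous-time limit fully rigorous rather than merely formal. The calculation above establishes \emph{consistency} of the discretization with the ODE, but concluding that the iterates $(\vx^n,\vp^n)$ actually track the ODE trajectory on compact time intervals would require a convergence theorem for one-step schemes --- consistency plus stability, closed by Gr\"onwall's inequality. If the proposition is read as the scheme being a consistent first-order discretization of the ODE, the sketch above suffices; a full trajectory-convergence statement would require controlling local truncation errors uniformly over $\Theta(1/\tau)$ and $\Theta(1/\sigma)$ steps, which is standard but notationally heavy.
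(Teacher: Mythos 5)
Your derivation of the ODE limit is essentially the paper's own proof: the paper likewise rearranges the updates of \peqref{eq:pdd_pdhg_system} into difference quotients $\frac{\vp^{n+1}-\vp^n}{\sigma}$ and $\frac{\vx^{n+1}-\vx^n}{\tau}$ and sends $\tau,\sigma\to 0$, $\sigma\omega\to\gamma$; your route through the over-relaxation variable $\widetilde{\vp}^{n+1}$ is the same computation organized around Algorithm \ref{alg:pdd}, which the paper dismisses with ``similarly.'' You actually do more than the paper in two places: the stationary-state argument (the paper writes ``Clearly''; your chain $\mA\succ 0 \Rightarrow \nabla f(\vx^*)=\varepsilon\vp^*$, then $\nabla^2 f(\vx^*)\vp^*=0 \Rightarrow \vp^*=0$ by strong convexity is the intended reasoning), and the well-posedness claim, which the paper asserts in the proposition statement but never proves. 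Your closing remark that the limit argument establishes only consistency, not trajectory convergence, is an accurate description of what the paper itself proves.

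One step in your well-posedness argument would fail as written: the parenthetical claim that a product of a bounded continuous matrix-valued function and a Lipschitz vector-valued function is locally Lipschitz is false in general (take $d=1$, $\mC(x)=\sqrt{|x|}$ near $0$ and the constant function $1$; the product is not Lipschitz at $0$). Picard--Lindel\"of needs the right-hand side locally Lipschitz in the full state $(\vx,\vp)$, and since $\vx$ enters through $\mA(\vx)$ and $\mC(\vx)$, this requires those matrix functions to be (locally) Lipschitz in $\vx$, not merely continuous and bounded. Under the hypotheses as literally stated, Peano gives existence but uniqueness can genuinely fail. This gap is inherited from the proposition itself --- the paper's hypotheses are too weak for the uniqueness claim, and its proof simply does not address the point --- but if you invoke Picard--Lindel\"of you should either strengthen the hypothesis to Lipschitz continuity of $\mA$ and $\mC$ or note that in the paper's actual setting ($\mA=A\sI$ constant, $\mC$ built from $\nabla^2 f$ with $f\in\mathcal{C}^2$ or better) the needed regularity holds locally.
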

\begin{proof}
    Rearranging \eqref{eq:p_updates} and \eqref{eq:x_updates_2}, we have
    \begin{align*}
        \frac{\vp^{n+1}-\vp^n}{\sigma} &= \mA(\vx^n) \nabla f(\vx^n) - \varepsilon  \mA(\vx^n) \vp^{n+1} \,,\\
          \frac{\vx^{n+1} - \vx^n}{\tau} &=  -\mB(\vx^n) \nabla^2f(\vx^{n+1}) \big[(1+\omega)\big(\sI + \sigma \varepsilon \mA(\vx^n)\big)^{-1}\sigma \mA(\vx^n) \nabla f(\vx^n) \nonumber \\ 
    &\qquad +\big((\omega+1)\big(\sI + \sigma \varepsilon \mA(\vx^n)\big)^{-1}-\omega \sI \big) \vp^n \big]\,.
    \end{align*}
Taking the limit as $\tau,\sigma \to 0$ and $\sigma \omega \to \gamma$, we obtain 
\begin{align*}
    \Dot{\vp} &= \mA(x)\nabla f(\vx) - \varepsilon \mA(x)  \vp\,,  \\
    \Dot{\vx} &= -\mB(\vx)\nabla^2 f(\vx)(\vp + \gamma (\mA(x) \nabla f(\vx) - \varepsilon \mA(x)\vp)) \,.
\end{align*}
Similarly, the update in Algorithm \ref{alg:pdd} also converges to the ODE system \eqref{eq:p_x_updates_ode_regu}. Clearly, a stationary state satisfies $\vp^*=0$, $\nabla f(\vx^*)=0$.
\end{proof}


\begin{proposition}[Primal-dual damping second order ODE]\label{prop:pdd_ode_regu}
   The ODE system \eqref{eq:p_x_updates_ode_regu} satisfies the following second-order ODE 
\begin{equation}\label{eq:time_dependent_general_formulation}
    \ddot{\vx} +\big[ \varepsilon \mA  + \gamma \mC \mA\nabla^2 f(\vx)-\dot{\mC} \mC^{-1}\big] \dot{\vx} +\mC\mA \nabla f(\vx)=0\,.
\end{equation}
Here $\dot \mC=\frac{d}{dt}\mC(\vx(t))$. 
\end{proposition}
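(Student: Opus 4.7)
The plan is to eliminate the dual variable $\vp$ from the first-order system \eqref{eq:p_x_updates_ode_regu} by differentiating once more in time and substituting, producing a closed second-order ODE in $\vx$. Two reformulations will carry the argument: equation \eqref{eq:x_updates_ode_regu} can be written compactly as $\dot{\vx} = -\mC(\vp + \gamma \dot{\vp})$, which yields the key identity $\vp + \gamma \dot{\vp} = -\mC^{-1}\dot{\vx}$, while \eqref{eq:p_updates_ode_regu} is equivalent to $\dot{\vp} + \varepsilon \mA \vp = \mA \nabla f(\vx)$.

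First I would differentiate $\dot{\vx} = -\mC(\vp + \gamma \dot{\vp})$ in $t$ to obtain
\begin{equation*}
\ddot{\vx} = -\dot{\mC}(\vp + \gamma \dot{\vp}) - \mC(\dot{\vp} + \gamma \ddot{\vp}).
\end{equation*}
Substituting the first identity into the $\dot{\mC}$ term gives $-\dot{\mC}(\vp + \gamma \dot{\vp}) = \dot{\mC}\mC^{-1}\dot{\vx}$, which already accounts for the $-\dot{\mC}\mC^{-1}\dot{\vx}$ contribution to the coefficient of $\dot{\vx}$ in \eqref{eq:time_dependent_general_formulation}. Next I would add $\varepsilon \mA \dot{\vx}$ to both sides; on the right this equals $-\varepsilon \mA \mC(\vp + \gamma \dot{\vp})$, and under the paper's standing assumption $\mA = A\sI$ the factor $\mA$ commutes with $\mC$, so that the result regroups to
\begin{equation*}
\ddot{\vx} - \dot{\mC}\mC^{-1}\dot{\vx} + \varepsilon \mA \dot{\vx} = -\mC(\dot{\vp} + \varepsilon \mA \vp) - \gamma \mC(\ddot{\vp} + \varepsilon \mA \dot{\vp}).
\end{equation*}

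The second identity $\dot{\vp} + \varepsilon \mA \vp = \mA \nabla f(\vx)$ handles the first grouped term directly. Differentiating it in $t$, with $\mA$ time-independent, gives $\ddot{\vp} + \varepsilon \mA \dot{\vp} = \mA \nabla^2 f(\vx)\dot{\vx}$ by the chain rule, which handles the second grouped term. Substituting both yields $-\mC \mA \nabla f - \gamma \mC \mA \nabla^2 f \, \dot{\vx}$ on the right, and moving all $\dot{\vx}$-terms to the left produces exactly \eqref{eq:time_dependent_general_formulation}.

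The main obstacle is essentially bookkeeping with non-commuting matrix products, together with being explicit about when the commutation between $\mA$ and $\mC$ is invoked. The computation goes through cleanly under the paper's standing choice $\mA = A\sI$, but in full generality one would need the compatibility conditions $\mA\mC = \mC\mA$ and $\dot{\mA} = 0$, which is worth flagging.
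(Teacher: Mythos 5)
Your proof is correct, and it takes a recognizably different algebraic route from the paper's, so a comparison is worthwhile. The paper's proof (Appendix \ref{appendix:proof_pdd_ode_regu}) first expands the primal equation as $\dot{\vx} = -\mC\big[(\sI-\gamma\varepsilon\mA)\vp + \gamma\mA\nabla f(\vx)\big]$, differentiates it, substitutes $\dot{\vp} = \mA\nabla f - \varepsilon\mA\vp$, and then eliminates the leftover $\vp$ by inverting that same expanded relation; notably, it never forms $\ddot{\vp}$, and the cancellation of the $\gamma\varepsilon$-cross terms is carried out by hand. You instead keep the compact form $\dot{\vx} = -\mC(\vp+\gamma\dot{\vp})$, add $\varepsilon\mA\dot{\vx}$ to both sides, and regroup into the damped combinations $\dot{\vp}+\varepsilon\mA\vp$ and $\ddot{\vp}+\varepsilon\mA\dot{\vp}$, which are then evaluated using the dual equation and its time derivative. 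This buys you two things: you never need to solve for $\vp$ explicitly, and the $\gamma\varepsilon$-cancellation is automatic rather than checked term by term; the mild price is that you invoke $\ddot{\vp}$, whose existence follows from $\vx,\vp \in \mathcal{C}^1$ and $f \in \mathcal{C}^2$, so nothing is lost. Your closing caveat is also a genuine improvement in precision: both proofs require $\mA$ constant in time and commuting with $\mC$ (the paper uses this tacitly in its final step, where $\mC\varepsilon\mA\mC^{-1}$ is replaced by $\varepsilon\mA$), and this is exactly what the standing assumption $\mA = A\sI$ supplies; stating it explicitly, as you do, makes clear that the proposition as written does not hold for a general state-dependent $\mA(\vx)$.
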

The proof follows by direct calculations and can be found in Appendix \ref{appendix:proof_pdd_ode_regu}. We note that the formulation given by \eqref{eq:time_dependent_general_formulation} includes several important special cases in the literature. In a word, we view \eqref{prop:pdd_ode_regu} as a preconditioned accelerated gradient flow. 

\begin{example}
Let $\mC=\mA=\mathbb{I}$ and $\gamma\neq 0$. Then equation \eqref{prop:pdd_ode_regu} satisfies 
\begin{equation}\label{eq:Hessian_driven_damping_general}
      \ddot{\vx} +\epsilon \dot \vx  + \gamma \nabla^2 f(\vx) \dot{\vx} + \nabla f(\vx)=0\,,
\end{equation}
which is an inertial system with Hessian-driven damping \citep{attouch2020first}.
\end{example}
\begin{remark}
    In the case of $\mC =\mA= \sI$, although the derived second order ODE \eqref{eq:time_dependent_general_formulation} is the same as the one in \citet{attouch2020first} at a continuous time level, our algorithm \ref{alg:pdd} provides a different time discretization from the one in \citet{attouch2020first}. 
\end{remark}
\begin{example}
   Let $\mC=\mA = \sI$, $\gamma(t)=0$. Then equation \eqref{prop:pdd_ode_regu} satisfies the heavy ball ODE \citep{siegel2019accelerated} 
    \begin{equation} \label{eq:hb_ode}
    \ddot{\vx} + \varepsilon \dot{\vx} + \nabla f(\vx) = 0\,.
    \end{equation}
\end{example}
\begin{example}
    Let $\mC=\mA = \sI$,  $\gamma(t)=0$, $\varepsilon(t) = \frac{3}{t}$. Then equation \eqref{prop:pdd_ode_regu} satisfies the Nesterov ODE \citep{su2015differential}: 
    \begin{equation} \label{eq:nesterov_ode}
    \ddot{\vx} + \frac{3}{t} \dot{\vx} + \nabla f(\vx) = 0\,.
    \end{equation}
\end{example}

We next provide a convergence analysis of ODE \eqref{eq:p_x_updates_ode_regu} for quadratic optimization problems. We demonstrate the importance of preconditioners in characterizing the convergence speed of ODE \eqref{eq:p_x_updates_ode_regu}.

\begin{theorem}\label{prop:convergence_rate_original_pdd}
Suppose $f(\vx) = \frac{1}{2}\vx^T \mQ \vx$ for some symmetric positive definite matrix $\mQ\in \mathbb{R}^{d\times d}$. Assume $\mA$, $\mB$ are constant matrices. In this case, equation \eqref{eq:p_x_updates_ode_regu} satisfies the linear ODE system: 
\begin{equation*}
    \begin{pmatrix}
        \dot{\vx} \\ \dot{\vp} 
    \end{pmatrix} = \begin{pmatrix}
        -\gamma \mB \mQ\mA\mQ &-\mB\mQ(\sI-\gamma \varepsilon \mA ) \\
         \mA\mQ & -\varepsilon \mA 
    \end{pmatrix} \begin{pmatrix}
        \vx \\ \vp 
    \end{pmatrix}\,.
\end{equation*}
Suppose that $\mA$ commutes with $\mQ$, such that $\mA\mQ=\mQ\mA$. Suppose $\mA$ and $\mB\mQ\mA\mQ$ are simultaneously diagonalizable and have positive eigenvalues. Let $\mu_1\geq \ldots \geq \mu_n >0$ be the eigenvalues of $\mB\mQ\mA\mQ$ and $a_i$ the $i$-th eigenvalue of $\mA$ (not necessarily in descending order) in the same basis.  Then 
\begin{enumerate}
    \item[(a)] The solution of ODE system \peqref{eq:p_x_updates_ode_regu} converges to $(\vx^*,\vp^*) = (0,0)$: 
$$
\|(\vx(t),\vp(t))\| \leq  \|(\vx_0,\vp_0)\| \exp(\alpha t)\,,
$$
where 
$$
\alpha = \max_i \frac{1}{2}\big[-\gamma \mu_i - \varepsilon a_i + \Re\big(\sqrt{(\gamma\mu_i + \varepsilon)^2-4\mu_i} \big)   \big]\,.
$$
\item[(b)] When $\mA = \sI, \varepsilon=0$, the optimal convergence rate is achieved at $\gamma^* = \frac{2\sqrt{\mu_1}}{\sqrt{\mu_n(2\mu_1-\mu_n)}} $. The corresponding rate is $\alpha = \frac{-\sqrt{\mu_n}}{\sqrt{2-\frac{1}{\kappa}}}$, where $\kappa = \mu_1/\mu_n >1$. 
\item[(c)] Moreover, when $\gamma=\varepsilon = 0$, the system will not converge for any initial data $(\vx_0,\vp_0) \neq (0,0)$. 
\item[(d)] If $\mA = \sI$, $\gamma \leq \frac{1}{\sqrt{\mu_1}}$, $\varepsilon = 2\sqrt{\mu'}-\gamma \mu'$ for some $\mu' \leq \mu_n$, then 
$$
\alpha = -\sqrt{\mu'}-\frac{\gamma}{2}(\mu_n - \mu')\leq -\sqrt{\mu'} \,. 
$$
\end{enumerate}
\end{theorem}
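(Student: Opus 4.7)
The plan is to use the simultaneous-diagonalization hypothesis to decouple the $2d$-dimensional linear system into $d$ independent $2\times 2$ blocks, and then analyze each block by elementary spectral methods. Since $\mA,\mQ$ are symmetric positive definite and commute, they share an orthonormal eigenbasis in which $\mA=\operatorname{diag}(a_i)$ and $\mQ=\operatorname{diag}(q_i)$; the hypothesis that $\mA$ and $\mB\mQ\mA\mQ$ are simultaneously diagonalizable in the same basis forces $\mB\mQ\mA\mQ=\operatorname{diag}(\mu_i)$, and hence $\mB\mQ=\operatorname{diag}(\mu_i/(a_iq_i))$ and $\mB\mQ\mA=\operatorname{diag}(\mu_i/q_i)$. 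After a permutation of coordinates, the coefficient matrix becomes block-diagonal with blocks
\begin{equation*}
M_i=\begin{pmatrix}-\gamma\mu_i & -\tfrac{\mu_i}{a_iq_i}+\tfrac{\gamma\varepsilon\mu_i}{q_i}\\[2pt] a_iq_i & -\varepsilon a_i\end{pmatrix},
\end{equation*}
for which a direct calculation yields $\operatorname{tr}M_i=-\gamma\mu_i-\varepsilon a_i$ and (after cancellation) $\det M_i=\mu_i$. The eigenvalues are therefore $\lambda_i^{\pm}=\tfrac12\bigl[-\gamma\mu_i-\varepsilon a_i\pm\sqrt{(\gamma\mu_i+\varepsilon a_i)^2-4\mu_i}\,\bigr]$, and part (a) follows from the standard bound $\|e^{tM}\|\le C\exp(t\max_i\Re\lambda_i^{\pm})$.

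For part (b), with $\mA=\sI$ and $\varepsilon=0$ one has $\alpha_i(\gamma)=\tfrac12[-\gamma\mu_i+\Re\sqrt{\gamma^2\mu_i^2-4\mu_i}]$, which equals $-\gamma\mu_i/2$ on the complex regime $\gamma\le 2/\sqrt{\mu_i}$ (decreasing in $\gamma$) and $\tfrac12[-\gamma\mu_i+\sqrt{\gamma^2\mu_i^2-4\mu_i}]$ on the real regime (increasing in $\gamma$, tending to $0^-$). Comparing $(\gamma^2\mu-2)^2$ with $\gamma^2(\gamma^2\mu^2-4\mu)$ shows that on the real regime $\alpha_i$ is also increasing in $\mu_i$, so the worst case is controlled by $\mu_n$ (complex regime) and $\mu_1$ (real regime). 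The optimum equates these two: solving $-\gamma\mu_n/2=\tfrac12[-\gamma\mu_1+\sqrt{\gamma^2\mu_1^2-4\mu_1}]$ gives $\gamma^{*2}=4\mu_1/(\mu_n(2\mu_1-\mu_n))$ and, after simplification, $\alpha=-\sqrt{\mu_n}/\sqrt{2-1/\kappa}$.

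Part (c) is immediate from the eigenvalue formula: with $\gamma=\varepsilon=0$ each block has zero trace and positive determinant $\mu_i$, so $\lambda_i^{\pm}=\pm i\sqrt{\mu_i}$ and every nonzero orbit is a purely oscillatory superposition. For part (d), substituting $\mA=\sI$ and $\varepsilon=2\sqrt{\mu'}-\gamma\mu'$ into the discriminant and factoring gives
\begin{equation*}
(\gamma\mu_i+\varepsilon)^2-4\mu_i=(\mu_i-\mu')\bigl[\gamma^2(\mu_i-\mu')+4\gamma\sqrt{\mu'}-4\bigr].
\end{equation*}
The bracket is a concave-up quadratic in $\gamma$ with positive root $2/(\sqrt{\mu_i}+\sqrt{\mu'})$; since $\mu_i,\mu'\le\mu_1$ implies $\sqrt{\mu_i}+\sqrt{\mu'}\le 2\sqrt{\mu_1}$, the hypothesis $\gamma\le 1/\sqrt{\mu_1}$ yields $\gamma\le 2/(\sqrt{\mu_i}+\sqrt{\mu'})$, so the discriminant is $\le 0$. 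Then $\Re\lambda_i^{\pm}=-\tfrac12(\gamma\mu_i+\varepsilon)=-\sqrt{\mu'}-\gamma(\mu_i-\mu')/2$, maximized over $i$ at $\mu_i=\mu_n$, which gives the asserted bound.

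The main obstacle is the min--max argument in part (b): one must verify that $\max_i\alpha_i(\gamma)$, a nonsmooth function of $\gamma$ built from piecewise smooth pieces, attains its minimum precisely where the two extremal curves $\alpha_{\mu_n}$ (complex regime) and $\alpha_{\mu_1}$ (real regime) meet, and confirm that no intermediate $\mu_i$ exceeds this common value at $\gamma^*$. The within-regime monotonicity of $\alpha_i$ in $\mu_i$, established above by comparing $(\gamma^2\mu-2)^2$ with $\gamma^2(\gamma^2\mu^2-4\mu)$, closes this gap and shows $\gamma^*$ is truly optimal.
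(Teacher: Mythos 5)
Your overall route is the same as the paper's: reduce the $2d\times 2d$ linear system to $d$ scalar quadratics $\alpha^2+\alpha(\varepsilon a_i+\gamma\mu_i)+\mu_i=0$, then run the identical regime analysis in (b) (complex regime controlled by $\mu_n$, real regime by $\mu_1$, optimum at the crossing of the two extremal curves), the trace/determinant observation in (c), and a discriminant-sign computation in (d). Your part (d) is a mild variant: you factor the discriminant as $(\mu_i-\mu')\bigl[\gamma^2(\mu_i-\mu')+4\gamma\sqrt{\mu'}-4\bigr]$, where the paper instead argues via monotonicity of $\mu\mapsto 2\sqrt{\mu}-\gamma\mu$; both are correct, and yours is self-contained. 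Your check that the real-regime rate is increasing in $\mu_i$, by comparing $(\gamma^2\mu-2)^2$ with $\gamma^2(\gamma^2\mu^2-4\mu)$, is exactly the derivative computation the paper leaves to the reader. Note also that your discriminant correctly carries $\varepsilon a_i$ inside the square root, whereas the theorem statement and the paper's proof write $(\gamma\mu_i+\varepsilon)^2-4\mu_i$ while keeping $\varepsilon a_i$ in the linear term; that inconsistency in the paper is immaterial in (b)--(d), where $\mA=\sI$, and your version is the correct algebra.

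The one genuine gap is in your reduction step for (a). Decoupling into the $2\times2$ blocks $M_i$ requires $\mQ$ (hence $\mB\mQ$) to be diagonal in the \emph{same} basis that diagonalizes $\mA$ and $\mB\mQ\mA\mQ$. That alignment is automatic when $\mA$ has simple spectrum, but it does not follow from the stated hypotheses, and it can fail precisely in the case relevant to (b)--(d), namely $\mA=\sI$: take $\mA=\sI$, $\mQ=\mathrm{diag}(1,2)$, $\mB=\begin{pmatrix}2&1\\1&1\end{pmatrix}$. All hypotheses hold ($\mB\mQ\mA\mQ=\begin{pmatrix}2&4\\1&4\end{pmatrix}$ has positive eigenvalues $3\pm\sqrt{5}$, and $\sI$ is diagonal in any basis), yet $\mQ$ and $\mB\mQ\mA\mQ$ do not commute, so they admit no common eigenbasis and the system does not block-decouple. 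The paper avoids this by never decoupling: it computes the characteristic polynomial through a block-determinant identity and uses $\mA\mQ=\mQ\mA$ to reduce it to $\det\bigl(\alpha^2\sI+\alpha(\varepsilon\mA+\gamma\mB\mQ\mA\mQ)+\mB\mQ\mA\mQ\bigr)=0$, which factors into the scalar quadratics using only the simultaneous diagonalizability of $\mA$ and $\mB\mQ\mA\mQ$. Replacing your block decomposition with this determinant computation repairs the gap, and your (b)--(d) then go through unchanged, since they only use the eigenvalue formula. (A caveat you share with the paper: eigenvalue location alone yields $\|(\vx(t),\vp(t))\|\le C\,\|(\vx_0,\vp_0)\|e^{\alpha t}$ with some constant $C$ -- which you at least write -- not the constant-one bound asserted in (a), since the system matrix is not normal; neither proof addresses this.)
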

We defer the proof to Appendix \ref{appendix:proof_proposition_rate}.
\begin{remark}
    If $\omega$ is bounded, then we have $\gamma = \cO(\sigma)$. Then, in the limit as $\sigma \to 0$, we also have that $\gamma \to 0$. By Theorem \ref{prop:convergence_rate_original_pdd} (c), the ODE system \peqref{eq:p_x_updates_ode_regu} does not converge for any initial data. 
\end{remark}
\begin{remark}
    If $\mu'$ is an estimate of the smallest eigenvalue $\mu_n$, then the convergence speed for the solution of heavy ball ODE is $\exp(-\sqrt{\mu'}t)$. In Theorem \ref{prop:convergence_rate_original_pdd} (d), if $\gamma=0$ and $\mu' = \mu_n$, then $\alpha = -\sqrt{\mu_n}$ which is the same as the convergence rate of the heavy ball ODE \citep{siegel2019accelerated}. However, if $\gamma >0$ and $\mu' < \mu_n$, then we have $\alpha = -\sqrt{\mu'}-\gamma(\mu_n - \mu') < -\sqrt{\mu'} $, which converges faster than the heavy ball ODE.  
\end{remark}

\section{Lyapunov Analysis}\label{section:Lyapunov_analysis}
In this section, we present the main theoretical result of this paper. We provide the convergence analysis for general objective functions in both continuous-time ODEs \eqref{eq:p_x_updates_ode_regu} and discrete-time Algorithm \ref{alg:pdd}. From now on, we make the following two assumptions for the convergence analysis. 
\begin{assumption}\label{assumption:mu_L}
 There exists two constants $L\geq \mu>0$ such that $\mu \sI \preceq \mC_0(\vx) \preceq L \sI$ for all $\vx$, where $\mC_0(\vx) = \nabla^2 f(\vx) \mB(\vx) \nabla^2 f(\vx) $, and $\mu \leq 1$. 
\end{assumption}
\begin{assumption}\label{assumption:L'}
     There exists a constant $L'>0$ such that 
\begin{equation}\label{condition:L'}
    \mC(\vx)^T\big(\nabla^3 f(\vx) \nabla f(\vx)+(\nabla^2 f(\vx))^2\big)\mC(\vx) \preceq L' \sI 
\end{equation}
for all $\vx$, where $\mC(\vx) = \mB(\vx) \nabla^2  f(\vx)$. 
\end{assumption}

\subsection{Continuous time Lyapunov analysis}
In this subsection, we establish convergence results of the ODE system \eqref{eq:p_x_updates_ode_regu}. 

\begin{theorem}\label{thm:continuous_Lyapunov_eps}
Consider the ODE system \eqref{eq:p_x_updates_ode_regu} with an initial condition $(x(0), p(0))\in\mathbb{R}^{2d}$. Define the functional
\begin{equation}\label{eq:Lyapunov_2}
    \gI(\vx,\vp) = \frac{1}{2}(\|\vp\|^2 + \|\nabla f (\vx)\|^2) \,.
\end{equation}
Suppose Assumption \ref{assumption:mu_L} holds, we have 
\begin{equation}
    \gI(\vx(t),\vp(t)) \leq \gI(\vx(0),\vp(0)) \exp(-2\lambda t)\,,
\end{equation}
where
\begin{align}
    \lambda = \min\Big\{&\mu\gamma A - \frac{1}{2}|A - \mu(1 - \varepsilon \gamma A)|, L\gamma A - \frac{1}{2}|A - L(1 - \varepsilon \gamma A)|, \nonumber \\
    & \varepsilon A - \frac{1}{2}|A - \mu(1 - \varepsilon \gamma A)|,   \varepsilon A - \frac{1}{2}|A - L(1 - \varepsilon \gamma A)| \Big\} \nonumber
\end{align}
In particular, when $\gamma = \frac{1}{\mu}, \varepsilon = 1, A =  \frac{\mu+L}{2+(\mu+L)\varepsilon\gamma}$, then $\lambda = \frac{\mu}{2}$.   
\end{theorem}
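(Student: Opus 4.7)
The plan is to show exponential decay of $\gI$ along trajectories by computing $\dot{\gI}$ and bounding it by $-2\lambda \gI$, then applying Gronwall's inequality. Differentiating along the flow and using the ODE \eqref{eq:p_x_updates_ode_regu} with $\mA=A\sI$,
\begin{equation*}
\dot{\gI} = \vp^T\dot{\vp} + \nabla f(\vx)^T\nabla^2 f(\vx)\dot{\vx}.
\end{equation*}
Substituting the two updates and using the crucial identity $\nabla^2 f(\vx)\,\mC(\vx) = \nabla^2 f(\vx)\,\mB(\vx)\,\nabla^2 f(\vx) = \mC_0(\vx)$, one obtains, after gathering terms,
\begin{equation*}
-\dot{\gI} = \varepsilon A\|\vp\|^2 + \gamma A\,\nabla f(\vx)^T\mC_0(\vx)\nabla f(\vx) + \vp^T\bigl[(1-\gamma\varepsilon A)\mC_0(\vx) - A\sI\bigr]\nabla f(\vx).
\end{equation*}
The symmetric and diagonal pieces are manifestly controlled by $\mu\sI \preceq \mC_0 \preceq L\sI$; the only nontrivial step is the mixed term.

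To extract a sharp constant, I would diagonalize $\mC_0(\vx(t))$ pointwise as $\mC_0 = U\Lambda U^T$ with $\lambda_i \in [\mu,L]$, and set $\tilde\vp = U^T\vp$, $\widetilde{\nabla f} = U^T\nabla f$. In these coordinates $-\dot{\gI}$ becomes a sum over $i$ of
\begin{equation*}
\varepsilon A\,\tilde\vp_i^2 + \gamma A\lambda_i\,\widetilde{\nabla f}_i^{\,2} + \bigl((1-\gamma\varepsilon A)\lambda_i - A\bigr)\tilde\vp_i\widetilde{\nabla f}_i.
\end{equation*}
Young's inequality $|\tilde\vp_i\widetilde{\nabla f}_i|\leq \tfrac12(\tilde\vp_i^2 + \widetilde{\nabla f}_i^{\,2})$ then yields a per-mode lower bound
\begin{equation*}
\Bigl(\varepsilon A - \tfrac12|A-(1-\gamma\varepsilon A)\lambda_i|\Bigr)\tilde\vp_i^2 + \Bigl(\gamma A\lambda_i - \tfrac12|A-(1-\gamma\varepsilon A)\lambda_i|\Bigr)\widetilde{\nabla f}_i^{\,2}.
\end{equation*}
Each of these two coefficients is piecewise linear in $\lambda_i$ on $[\mu,L]$, so its minimum over the spectrum of $\mC_0$ is attained at an endpoint $\lambda_i\in\{\mu,L\}$. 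Taking the worst case over the four endpoint/coefficient combinations produces exactly the four quantities in the definition of $\lambda$, giving $-\dot{\gI}\geq \lambda(\|\vp\|^2+\|\nabla f(\vx)\|^2) = 2\lambda \gI$. Gronwall then delivers the stated exponential bound.

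The specific choice $\gamma=1/\mu$, $\varepsilon=1$, $A=(\mu+L)/(2+(\mu+L)\varepsilon\gamma)$ is verified by a direct computation: one checks $\mu\varepsilon\gamma=1$, so $(1+\mu\varepsilon\gamma)A = 2A$, whence $|A-\mu(1-\varepsilon\gamma A)| = |2A-\mu| = \mu(L-\mu)/(3\mu+L)$ and similarly for the $L$-endpoint; substituting into $\mu\gamma A - \tfrac12|A-\mu(1-\varepsilon\gamma A)|$ collapses to $\mu/2$, and the other three candidates can be checked to be at least $\mu/2$ as well, so $\lambda=\mu/2$.

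The main obstacle is the indefinite cross term $\vp^T[(1-\gamma\varepsilon A)\mC_0 - A\sI]\nabla f$: if one naively applies Young's inequality with a global operator-norm bound, the resulting $\lambda$ is strictly weaker than the stated one, because $\|(1-\gamma\varepsilon A)\mC_0 - A\sI\|$ has to be paid against a single quadratic form rather than against the two different forms $\varepsilon A\|\vp\|^2$ and $\gamma A \nabla f^T\mC_0\nabla f$. The diagonalization of $\mC_0$ and mode-by-mode Young's inequality is what lets the $\|\vp\|^2$ part be balanced against $\varepsilon A$ and the $\|\nabla f\|^2$ part against $\gamma A\lambda_i$ separately, producing the four-term minimum and in particular recovering $\lambda=\mu/2$ in the distinguished case.
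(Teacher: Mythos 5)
Your proof is correct and follows essentially the same route as the paper: the paper also computes $\dot{\gI}$, controls the cross term $\vp^T[(1-\gamma\varepsilon A)\mC_0 - A\sI]\nabla f$ via simultaneous diagonalization and per-mode Young's inequality (this is exactly its Lemma \ref{lemma:matrix_CS}), uses piecewise linearity in the eigenvalues of $\mC_0$ to reduce to the endpoints $\{\mu,L\}$, and concludes by Gronwall. The only cosmetic difference is that you inline the matrix lemma and verify the special parameter choice by direct substitution, whereas the paper derives it by equalizing the endpoint bounds and optimizing over $\gamma$.
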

\begin{proof}
It is straightforward to compute the following
\begin{align}
    \frac{\d \gI}{\d t} &= \la \vp,\dot{\vp} \ra + \la \nabla f, \nabla^2 f \dot{\vx} \ra  \nonumber \\
    &= -\nabla f^T \mC_0 \gamma \mA \nabla f - \vp^T\varepsilon \mA \vp + \nabla f^T \big(\mA  -\mC_0(\sI-\varepsilon \gamma \mA )\big)\vp 
\end{align}
We shall find $\lambda$ such that $\frac{\d \gI}{\d t} + 2\lambda \gI \leq 0$. Then we obtain the exponential convergence by Gronwall's inequality, i.e., 
\begin{equation*}
    \gI(\vx(t),\vp(t)) \leq \gI(\vx(0),\vp(0)) \exp(-2\lambda t)\ .
\end{equation*}
We can compute 
\begin{align}\label{eq:Lyapunov_continuous_derivative}
    \frac{\d \gI}{\d t} + 2\lambda \gI &= \nabla f^T \big(-\mC_0 \gamma \mA +\lambda \sI \big)\nabla f  + \vp^T\big(-\varepsilon \mA   + \lambda \sI \big) \vp \nonumber \\
    &\qquad + \nabla f^T \big(\mA  -\mC_0(\sI-\varepsilon \gamma \mA ) \big)\vp \,.
\end{align}
By Lemma \ref{lemma:matrix_CS}, we obtain
the following sufficient conditions for $\frac{\d \gI}{\d t} + 2\lambda \gI \leq 0$
\begin{subequations}\label{eq:conditions_on_lambda_epsilon}
\begin{align}
     -\varepsilon A  + \lambda + \frac{1}{2}|\xi_i(1-\varepsilon\gamma A ) -A| &\leq 0  \\
   \lambda -\xi_i \gamma A + \frac{1}{2}| \xi_i(1-\varepsilon\gamma A  ) -A| &\leq 0
\end{align}
\end{subequations}
where $\xi_i(\vx)$ is the eigenvalue of $\mC_0(\vx)$. By our assumptions, we have $L\geq \xi_1(\vx) \geq \ldots \geq \xi_n(\vx) \geq \mu$. \eqref{eq:conditions_on_lambda_epsilon} give two upper bounds on $\lambda$. Define $g_1(\xi) = \varepsilon A + \frac{1}{2}| \xi(1-\varepsilon\gamma A  ) -A|$, and $g_2(\xi) = \xi \gamma A - \frac{1}{2}| \xi(1-\varepsilon\gamma A  ) -A| $ on the interval $[\mu,L]$. Then \eqref{eq:conditions_on_lambda_epsilon} implies that 
\begin{equation}
    \lambda \leq g_j(\xi_i) \,,
\end{equation}
for all $i=1,\ldots,n$ and $j=1,2$. Since each $g_j(\xi)$ is a piece-wise linear in $\xi$, it is not hard to see that 
\begin{align*}
    \min_{\xi \in[\mu,L]} g_j(\xi) = \min \{ g_j(\mu),g_j(L) \} \,,
\end{align*}
for $j=1,2$. This proves the formula for $\lambda$. When $A =\frac{\mu+L}{2+(\mu+L)\varepsilon\gamma} $, we have $g_1(\mu) = g_1(L)$, and  
$$\mu(1-\varepsilon\gamma A ) -A = - L(1-\varepsilon\gamma A ) + A\,.
$$
Further, requiring $g_1(\mu) = g_2(\mu)$ yields $\varepsilon = \mu\gamma$. And we obtain 
\begin{align}
    \lambda &= \mu \gamma A - \frac{1}{2}|A - \mu(1 - \varepsilon \gamma \mA)|\nonumber \\
    &= \mu \gamma A - \frac{1}{2}(A - \mu(1 - \varepsilon \gamma A))\nonumber \\
    &= \frac{\mu}{2}+ A(\gamma \mu-\frac{1}{2}\gamma^2\mu^2-\frac{1}{2}) \nonumber \\
    &= \frac{\mu}{2} - \frac{A}{2}(\gamma\mu-1)^2.
\end{align}
We note that $\lambda$ is maximized when taking $\gamma=\mu^{-1}$. We obtain $\lambda =\frac{\mu}{2}$. 

\end{proof}

\subsection{Discrete time Lyapunov analysis}
In this subsection, we study the convergence criterion for the discretized linearized PDHG flow given by \eqref{eq:discrete_linearized_pdhg} and Algorithm \ref{alg:pdd}.

From now on, we assume that $f$ is a $\mathcal{C}^4$ strongly convex function.

We can rewrite the iterations as 
\begin{subequations}
\begin{align}
    \vp^{n+1} &= \frac{1}{1+\sigma \varepsilon A}\vp^n + \frac{\sigma A}{1+\sigma \varepsilon A}\nabla f(\vx^n)\,,\\
    \vx^{n+1} &= \vx^n - \tau \mB(\vx^n) \nabla^2 f(\vx^n)\left(\frac{1-\varepsilon \gamma A}{1+\sigma \varepsilon A}\vp^n + \frac{\sigma A + \gamma A}{1+\sigma \varepsilon A} \nabla f(\vx^n) \right)\,,
\end{align}
\end{subequations}
where $\gamma = \sigma \omega$. We define the following notations which will be used later. 
\begin{equation}
\mN(\vx^n) = \frac{1}{1+\sigma \varepsilon A}\begin{pmatrix}
\mB(\vx^n) \nabla^2 f(\vx^n)(\sigma A+\gamma A) & \mB(\vx^n) \nabla^2 f(\vx^n)(1-\varepsilon\gamma A)\\
-\frac{\sigma}{\tau}A & \frac{\sigma}{\tau}\varepsilon A
\end{pmatrix}\,.
\end{equation}
And 
$$\mH(\vx^n)= \rm{sym} \begin{pmatrix}
\begin{pmatrix}
    \nabla^2 f(\vx^n) & 0 \\
    0 &\sI 
    \end{pmatrix} \cdot \mN(\vx^n)
\end{pmatrix}\,.$$ 
\begin{remark}
    The matrix $\mN(\vx^n)$ and $\mH(\vx^n)$ also depends on the $\tau$, $\sigma$, $A$, $\varepsilon$ and $\omega$. 
\end{remark}
Define the Lyapunov functional in discrete time as 
$$ \gI(\vx^n,\vp^n) = \frac{1}{2}\|\nabla f(\vx^n) \|^2 + \frac{1}{2} \|\vp^n\|^2\,. 
$$

\begin{theorem}\label{thm:discrete_exp_decay_1}
Suppose that there exists positive constants $\lambda, M_1 \in \RR_+$, such that 
\begin{align*}
    \mH(\vx) &\succeq \lambda \sI\,,\\
    \mN(\vx)^T \nabla^2 \gI(\tilde{\vx},\tilde{\vp})
    \mN(\vx) &\preceq M_1 \sI,
\end{align*}
for all $\vx,\tilde{\vx}\in \RR^n$. If $\tau = a \frac{\lambda}{M}$ for some $a \in (0,2)$, then the functional $\gI(\vx^n,\vp^n)$ decreases geometrically, i.e. 
$$\gI(\vx^{n},\vp^{n}) \leq \gI(\vx^{0},\vp^{0})\big(1+(a^2-2a)\frac{\lambda^2}{2M_1}\big)^n\,.
$$
\end{theorem}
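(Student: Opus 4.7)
The plan is to perform a second-order Taylor expansion of the Lyapunov functional $\gI$ around the current iterate and extract descent from the first-order term while controlling the remainder via the second-order hypothesis. First, I would observe that the update in Algorithm \ref{alg:pdd} can be written compactly as
\[
\begin{pmatrix}\vx^{n+1}-\vx^n\\ \vp^{n+1}-\vp^n\end{pmatrix} = -\tau\,\mN(\vx^n)\,\vy^n,\qquad \vy^n:=\begin{pmatrix}\nabla f(\vx^n)\\ \vp^n\end{pmatrix},
\]
which follows from a direct calculation and is exactly what the block matrix $\mN(\vx^n)$ was introduced to encode. Since $f\in\mathcal{C}^4$ implies $\gI\in\mathcal{C}^2$, the mean-value form of Taylor's theorem yields
\[
\gI(\vx^{n+1},\vp^{n+1}) - \gI(\vx^n,\vp^n) = \langle \nabla\gI(\vx^n,\vp^n),\Delta^n\rangle + \tfrac{1}{2}(\Delta^n)^T\nabla^2\gI(\tilde{\vx},\tilde{\vp})\,\Delta^n,
\]
where $\Delta^n=-\tau\mN(\vx^n)\vy^n$ and $(\tilde{\vx},\tilde{\vp})$ lies on the segment joining the two iterates.

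Next I would handle the first-order term. From $\nabla\gI(\vx,\vp)=(\nabla^2 f(\vx)\nabla f(\vx),\vp)=\mathrm{diag}(\nabla^2 f(\vx),\sI)\,\vy$ one gets
\[
\langle \nabla\gI(\vx^n,\vp^n),\Delta^n\rangle = -\tau(\vy^n)^T\mathrm{diag}(\nabla^2 f(\vx^n),\sI)\,\mN(\vx^n)\,\vy^n = -\tau(\vy^n)^T\mH(\vx^n)\vy^n,
\]
using that a scalar quadratic form sees only the symmetric part of the matrix, which is precisely $\mH(\vx^n)$ by definition. Invoking $\mH(\vx^n)\succeq\lambda\sI$ bounds this by $-\tau\lambda\|\vy^n\|^2$, and since $\gI=\tfrac{1}{2}\|\vy\|^2$, this equals $-2\tau\lambda\,\gI(\vx^n,\vp^n)$.

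For the remainder, I would apply the hypothesis $\mN^T\nabla^2\gI\,\mN\preceq M_1\sI$ at the midpoint $(\tilde{\vx},\tilde{\vp})$, which is permissible because the bound is assumed uniform over the whole space, to obtain
\[
\tfrac{1}{2}(\Delta^n)^T\nabla^2\gI(\tilde{\vx},\tilde{\vp})\Delta^n = \tfrac{\tau^2}{2}(\vy^n)^T\mN(\vx^n)^T\nabla^2\gI(\tilde{\vx},\tilde{\vp})\mN(\vx^n)\vy^n \leq \tau^2 M_1\,\gI(\vx^n,\vp^n).
\]
Combining the two estimates gives $\gI^{n+1}\leq (1-2\tau\lambda+\tau^2 M_1)\,\gI^n$, and substituting $\tau=a\lambda/M_1$ with $a\in(0,2)$ reduces the contraction factor to the stated form; iterating yields the claim.

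The main obstacle, modest as it is, is that the Hessian of $\gI$ must be evaluated at the unknown midpoint $(\tilde{\vx},\tilde{\vp})$ rather than at the iterate, which is precisely why the hypothesis on $\mN^T\nabla^2\gI\,\mN$ is formulated with a universal quantifier over both arguments. In the broader theorem the constant $M_1$ is then identified with the bound from Assumption \ref{assumption:L'}, since $\nabla^2\gI$ contains exactly the block $\nabla^3 f\,\nabla f + (\nabla^2 f)^2$ together with the stepsize parameters packed into $\mN$; this also explains the $\mathcal{C}^4$ requirement on $f$. Apart from this regularity bookkeeping, the argument is a routine Lyapunov contraction step.
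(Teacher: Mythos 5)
Your proof is correct and follows essentially the same route as the paper's: the compact rewriting of the iteration via $\mN(\vx^n)$, the mean-value (second-order Taylor) expansion of $\gI$, extraction of the symmetric part $\mH(\vx^n)$ from the first-order term, and the uniform midpoint bound on $\mN^T\nabla^2\gI\,\mN$, followed by substituting $\tau = a\lambda/M_1$. One small note: since $\|\vy^n\|^2 = 2\,\gI(\vx^n,\vp^n)$, your bookkeeping actually yields the contraction factor $1+(a^2-2a)\frac{\lambda^2}{M_1}$, which is stronger than (and, because $a^2-2a<0$, implies) the stated factor $1+(a^2-2a)\frac{\lambda^2}{2M_1}$; the paper's own proof uses the looser combined bound $\big(-\tau\lambda+\frac{\tau^2M_1}{2}\big)\gI$ to land exactly on the stated constant, so your phrase ``reduces the contraction factor to the stated form'' should more precisely read ``is bounded above by the stated form.''
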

\begin{proof}
It follows from our definition of $\mN(\vx^n)$ that 
\begin{equation}\label{eq:xp_N_gradf_p}
    \begin{pmatrix}
    \vx^{n+1} - \vx^n \\
    \vp^{n+1} - \vp^n
    \end{pmatrix} = -\tau \mN(\vx^n) \begin{pmatrix}
    \nabla f(\vx^n) \\
    \vp^n
    \end{pmatrix} \,,
\end{equation}

By the mean-value theorem, we obtain 
\begin{align}
    &\gI(\vx^{n+1},\vp^{n+1})-\gI(\vx^{n},\vp^{n})\nonumber \\
    = &\begin{pmatrix}
    \nabla_{\vx}\gI(\vx^n,\vp^n)\\
    \nabla_{\vp}\gI(\vx^n,\vp^n)
    \end{pmatrix}^T 
    \begin{pmatrix}
    \vx^{n+1}-\vx^n \\
    \vp^{n+1} - \vp^n
    \end{pmatrix} + \frac{1}{2}\begin{pmatrix}
    \vx^{n+1}-\vx^n \\
    \vp^{n+1} - \vp^n
    \end{pmatrix}^T  \nabla^2 \gI(\tilde{\vx},\tilde{\vp})\begin{pmatrix}
    \vx^{n+1}-\vx^n \\
    \vp^{n+1} - \vp^n 
    \end{pmatrix} \nonumber 
\end{align}
where $(\tilde{\vx},\tilde{\vp})$ is in between $(\vx^{n+1},\vp^{n+1})$ and $(\vx^n,\vp^n)$. And 
\begin{align*}
    \nabla_{\vx}\gI(\vx^n,\vp^n) &= \nabla^2 f(\vx^n) \nabla f(\vx^n) \,,\\
    \nabla_{\vp}\gI(\vx^n,\vp^n) &= \vp^n\,, \\
    \nabla^2 \gI(\vx^n,\vp^n)&= \begin{pmatrix}
    \nabla^3 f(\vx^n) \nabla f(\vx^n)+\nabla^2 f(\vx^n)\nabla^2 f(\vx^n) & 0 \\
    0& \sI
    \end{pmatrix}\,.
\end{align*}
Then using \eqref{eq:xp_N_gradf_p} and definition of $\mH(\vx^n)$, we obtain 
\begin{align}\label{eq:I_second_taylor_inexact}
    &\gI(\vx^{n+1},\vp^{n+1})-\gI(\vx^{n},\vp^{n})\nonumber \\
    =&-\tau \begin{pmatrix}
    \nabla f(\vx^n) \\
    \vp^n
    \end{pmatrix}^T \begin{pmatrix}
    \nabla^2 f(\vx^n) &0 \\
    0 &\sI 
    \end{pmatrix} \cdot \mN(\vx^n) \begin{pmatrix}
    \nabla f(\vx^n) \\
    \vp^n
    \end{pmatrix}\nonumber \\
    &+\frac{\tau^2}{2}\begin{pmatrix}
    \nabla f(\vx^n) \\
    \vp^n
    \end{pmatrix}^T
    \mN(\vx^n)^T \nabla^2 \gI(\tilde{\vx},\tilde{\vp})
    \mN(\vx^n)
    \begin{pmatrix}
    \nabla f(\vx^n) \\
    \vp^n
    \end{pmatrix}\nonumber \\
    =&-\tau \begin{pmatrix}
    \nabla f(\vx^n) \\
    \vp^n
    \end{pmatrix}^T \mH(\vx^n) \begin{pmatrix}
    \nabla f(\vx^n) \\
    \vp^n
    \end{pmatrix}\nonumber \\
    &+ \frac{\tau^2}{2}\begin{pmatrix}
    \nabla f(\vx^n) \\
    \vp^n
    \end{pmatrix}^T
    \mN(\vx^n)^T \nabla^2 \gI(\tilde{\vx},\tilde{\vp})
    \mN(\vx^n)
    \begin{pmatrix}
    \nabla f(\vx^n) \\
    \vp^n
    \end{pmatrix}\,,
\end{align}
From \eqref{eq:I_second_taylor_inexact} and our assumption on $\mN(\vx)$ and $\mH(\vx)$, we obtain 
\begin{align}
    \gI(\vx^{n+1},\vp^{n+1})-\gI(\vx^{n},\vp^{n})&\leq \big(-\tau \lambda + \frac{\tau^2 M_1}{2}\big) \gI(\vx^{n},\vp^{n}) \nonumber \\
    &= \frac{M_1}{2}\big((\tau-\frac{\lambda}{M_1})^2 - \frac{\lambda^2}{M_1^2}\big) \gI(\vx^{n},\vp^{n})\nonumber \\
    &=(a^2-2a)\frac{\lambda^2}{2M_1}\gI(\vx^{n},\vp^{n})\,,
\end{align}
where we used $\tau = a \frac{\lambda}{M_1}$. Hence,
$$\gI(\vx^{n+1},\vp^{n+1}) \leq \gI(\vx^{n},\vp^{n})\big(1+(a^2-2a)\frac{\lambda^2}{2M_1}\big) \leq \gI(\vx^{0},\vp^{0})\big(1+(a^2-2a)\frac{\lambda^2}{2M_1}\big)^{n+1}\,.
$$
When $0<a<2$, we have $a^2-2a<0$. Thus we obtain the desired convergence result.
\end{proof}

\begin{theorem}\label{thm:discrete_exp_decay_2}
Let $f:\RR^d \to \RR$ be a $\mathcal{C}^4$ strongly convex function. Suppose $(\vx^0,\vp^0)$ satisfies 
\begin{equation}\label{condition:x0p0}
     \gI(\vx^0,\vp^0)^{1/2} \leq \frac{\delta}{\tau D_0 \|\mN(\vx)\|_2^3 }  \,,
\end{equation}
for some $\delta>0$ and all $\vx$. Here 
$$ D_0  = \sup_{\vx,\vp,\vx',\vp'} \frac{ \begin{pmatrix}
    \vx' \\
    \vp'
    \end{pmatrix}^T  \left( \nabla^3 \gI(\vx,\vp) \begin{pmatrix}
    \vx' \\
    \vp'
    \end{pmatrix}\right)  \begin{pmatrix}
    \vx' \\
    \vp'
    \end{pmatrix}}{\left\| \begin{pmatrix}
    \vx' \\
    \vp'
    \end{pmatrix}\right\|_2^3 }\,. 
$$
Suppose further that there exists positive constants $\lambda, M_2 \in \RR_+$ such that 
\begin{align*}
    \mH(\vx) &\succeq \lambda \sI\,,\\
    \mN(\vx)^T \nabla^2 \gI(\vx,\vp)
    \mN(\vx) &\preceq M_2\sI
\end{align*}
for all $\vx\in \RR^n$. If $\tau = a \frac{\lambda}{M_2+\delta}$ for some $a \in (0,2)$, then the functional $\gI(\vx^n,\vp^n)$ decreases geometrically, i.e. 
$$\gI(\vx^{n},\vp^{n}) \leq \gI(\vx^{0},\vp^{0})\big(1+\frac{a^2-2a}{2}\frac{\lambda^2}{M_2+\delta}\big)^n\,.
$$  
\end{theorem}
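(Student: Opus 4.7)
The plan is to mimic the proof of Theorem \ref{thm:discrete_exp_decay_1} line-for-line up through the second-order Taylor identity \eqref{eq:I_second_taylor_inexact}, and then to replace the uniform Hessian bound (which is not available here) by a local bound centered at $(\vx^n,\vp^n)$ plus a third-order correction. Concretely, I would write
\begin{equation*}
\nabla^2 \gI(\tilde{\vx},\tilde{\vp}) = \nabla^2 \gI(\vx^n,\vp^n) + \int_0^1 \nabla^3 \gI\bigl(\vx^n + s(\tilde\vx - \vx^n),\ \vp^n + s(\tilde\vp - \vp^n)\bigr)\bigl[(\tilde\vx-\vx^n,\tilde\vp-\vp^n)\bigr]\,\d s,
\end{equation*}
and then use the definition of $D_0$ to bound the operator norm of the remainder by $D_0 \|(\tilde\vx-\vx^n,\tilde\vp-\vp^n)\|_2$.

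The next step is to turn this displacement into something controlled by $\gI$. Since $(\tilde\vx,\tilde\vp)$ lies on the segment between $(\vx^n,\vp^n)$ and $(\vx^{n+1},\vp^{n+1})$, identity \eqref{eq:xp_N_gradf_p} gives
\begin{equation*}
\|(\tilde\vx - \vx^n,\tilde\vp - \vp^n)\|_2 \leq \tau\,\|\mN(\vx^n)\|_2 \cdot \|(\nabla f(\vx^n),\vp^n)\|_2 = \tau\,\|\mN(\vx^n)\|_2 \cdot \sqrt{2\,\gI(\vx^n,\vp^n)}.
\end{equation*}
Sandwiching this between two copies of $\mN(\vx^n)$ inside the quadratic form in \eqref{eq:I_second_taylor_inexact} therefore contributes an extra term of the shape $\tau\,D_0\,\|\mN(\vx^n)\|_2^3\,\sqrt{2\gI(\vx^n,\vp^n)}$ times the $\gI$-factor. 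The hypothesis \eqref{condition:x0p0} is designed precisely so that this extra contribution is at most $\delta$, making the effective upper bound on the Hessian term $M_2 + \delta$ rather than $M_2$. Carrying that substitution through the rest of the calculation in the proof of Theorem \ref{thm:discrete_exp_decay_1}, with $\tau = a\lambda/(M_2+\delta)$, yields the one-step decay
\begin{equation*}
\gI(\vx^{n+1},\vp^{n+1}) \leq \Bigl(1 + (a^2-2a)\frac{\lambda^2}{2(M_2+\delta)}\Bigr)\,\gI(\vx^n,\vp^n).
\end{equation*}

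The main obstacle is that the above estimate is only legitimate while $\gI(\vx^n,\vp^n)^{1/2}$ remains below the threshold $\delta/(\tau D_0 \|\mN(\vx^n)\|_2^3)$ used to absorb the cubic correction; otherwise the third-order remainder cannot be bounded by $\delta$. I would therefore close the argument by a short induction: assuming the threshold holds at step $n$, the one-step estimate gives $\gI^{n+1} \leq \gI^n$ because $a\in(0,2)$ forces $a^2-2a<0$, so in particular $\gI^{n+1} \leq \gI^0$, and the threshold from \eqref{condition:x0p0} continues to hold at step $n+1$. The base case is exactly the assumption \eqref{condition:x0p0}. Iterating the one-step bound then gives the claimed geometric rate $\bigl(1+\tfrac{a^2-2a}{2}\tfrac{\lambda^2}{M_2+\delta}\bigr)^n$. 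The only subtle point is verifying that the $\|\mN(\vx)\|_2^3$ factor in \eqref{condition:x0p0} is taken as a supremum over $\vx$ so that the inductive invariant is uniform, which is what the quantifier in the hypothesis already provides.
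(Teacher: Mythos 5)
Your overall strategy is the same as the paper's: expand $\gI$ around $(\vx^n,\vp^n)$ keeping the Hessian term at the base point so that the assumption on $\mN^T\nabla^2\gI\,\mN$ applies, control the leftover third-order piece through $D_0$ and the threshold \peqref{condition:x0p0}, and close with an induction that propagates the threshold via monotone decrease of $\gI$. Your induction step, and your remark that the supremum over $\vx$ in \peqref{condition:x0p0} makes the invariant uniform, both match the paper. The only structural difference is the organization of the Taylor expansion: the paper expands $\gI$ to third order with a Lagrange remainder carrying the coefficient $\tfrac16$, whereas you use the mean-value second-order form and then perturb the intermediate Hessian back to the base point, which carries the coefficient $\tfrac12$.

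That difference is where a genuine quantitative gap appears, and it breaks the claimed bound on part of the stated parameter range. Write $z^n=(\vx^n,\vp^n)$, $v^n=(\nabla f(\vx^n),\vp^n)$, $\Delta=-\tau\mN(\vx^n)v^n$. By your own displacement estimate the Hessian correction has size at most $D_0\|\tilde z-z^n\|_2\le D_0\tau\|\mN\|_2\sqrt{2\gI^n}$ (note the $\sqrt2$, since $\|v^n\|_2^2=2\gI^n$), so after sandwiching between two copies of $\mN$ and invoking \peqref{condition:x0p0} your extra term is bounded by $\tfrac12\,\sqrt2\,\delta\cdot\tau^2\|v^n\|_2^2=\sqrt2\,\tau^2\delta\,\gI^n$, not by $\tau^2\delta\,\gI^n$ as your "at most $\delta$" claim requires. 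The paper's expansion produces the same quantity but with $\tfrac16$ in place of $\tfrac12$, i.e.\ $\tfrac{2\sqrt2}{6}\tau^2\delta\,\gI^n<\tau^2\delta\,\gI^n$, and it is exactly this factor of $3$ of slack that absorbs the stray $\sqrt2$; your version has $\tfrac{2\sqrt2}{2}=\sqrt2>1$ and cannot. Concretely, your chain of inequalities yields $\gI^{n+1}-\gI^n\le\big(-2\tau\lambda+\tau^2(M_2+\sqrt2\,\delta)\big)\gI^n$, and with $\tau=a\lambda/(M_2+\delta)$ this implies the stated rate only when $a\big(\tfrac12 M_2+(\sqrt2-\tfrac12)\delta\big)\le M_2+\delta$; when $\delta$ is comparable to or larger than $M_2$ this fails for $a$ near $2$ (for $M_2\ll\delta$ it forces roughly $a\le 1.09$). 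The fix is to replace the mean-value form by the integral form of Taylor's theorem,
\begin{equation*}
\gI(z^n+\Delta)=\gI(z^n)+\nabla\gI(z^n)^T\Delta+\tfrac12\Delta^T\nabla^2\gI(z^n)\Delta+\int_0^1(1-s)\,\Delta^T\big(\nabla^2\gI(z^n+s\Delta)-\nabla^2\gI(z^n)\big)\Delta\,\d{s}\,,
\end{equation*}
whose remainder is bounded by $D_0\|\Delta\|_2^3\int_0^1 s(1-s)\,\d{s}=\tfrac16 D_0\|\Delta\|_2^3$; this recovers the paper's constant and makes your argument close for every $a\in(0,2)$.
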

\begin{remark}
Note that the constant $M_2$ in Theorem \ref{thm:discrete_exp_decay_2} can be better than the constant $M_1$ in Theorem \ref{thm:discrete_exp_decay_1} because $\mN$ and $\nabla^2 \gI$ are evaluated at the same $\vx$ in Theorem \ref{thm:discrete_exp_decay_2}. 
\end{remark}
\begin{proof}
We will prove it by induction. Using the mean-value theorem, we have 
\begin{align}\label{eq:I_third_taylor_upperbound}
    &\gI(\vx^{n+1},\vp^{n+1})-\gI(\vx^{n},\vp^{n}) \nonumber \\
    = &\begin{pmatrix}
    \nabla_{\vx}\gI(\vx^n,\vp^n)\\
    \nabla_{\vp}\gI(\vx^n,\vp^n)
    \end{pmatrix}^T 
    \begin{pmatrix}
    \vx^{n+1}-\vx^n \\
    \vp^{n+1} - \vp^n
    \end{pmatrix} + \frac{1}{2}\begin{pmatrix}
    \vx^{n+1}-\vx^n \\
    \vp^{n+1} - \vp^n
    \end{pmatrix}^T  \nabla^2 \gI(\vx^n,\vp^n)\begin{pmatrix}
    \vx^{n+1}-\vx^n \\
    \vp^{n+1} - \vp^n 
    \end{pmatrix} \nonumber \\
    &\qquad + \frac{1}{6} \begin{pmatrix}
    \vx^{n+1}-\vx^n \\
    \vp^{n+1} - \vp^n
    \end{pmatrix}^T \left(\nabla^3 \gI(\tilde{\vx}^n,\tilde{\vp}^n)    \begin{pmatrix}
    \vx^{n+1}-\vx^n \\
    \vp^{n+1} - \vp^n
    \end{pmatrix} \right) \begin{pmatrix}
    \vx^{n+1}-\vx^n \\
    \vp^{n+1} - \vp^n
    \end{pmatrix} \,,
\end{align}
where $(\tilde{\vx}^n,\tilde{\vp}^n)$ is in between $(\vx^{n+1},\vp^{n+1})$ and $(\vx^n,\vp^n)$. By \eqref{eq:I_third_taylor_upperbound} and \eqref{eq:xp_N_gradf_p}, we can bound  
\begin{align}\label{eq:I_third_taylor_inexact}
    &\gI(\vx^{1},\vp^{1})-\gI(\vx^{0},\vp^{0})\nonumber \\
    =&-\tau \begin{pmatrix}
    \nabla f(\vx^0) \\
    \vp^0
    \end{pmatrix}^T \mH(\vx^0) \begin{pmatrix}
    \nabla f(\vx^0) \\
    \vp^0
    \end{pmatrix}\nonumber \\
    &+\frac{\tau^2}{2}\begin{pmatrix}
    \nabla f(\vx^0) \\
    \vp^0
    \end{pmatrix}^T
    \mN(\vx^0)^T \nabla^2 \gI(\vx^0,\vp^0)
    \mN(\vx^0)
    \begin{pmatrix}
    \nabla f(\vx^0) \\
    \vp^0
    \end{pmatrix}\nonumber \\
    &- \frac{\tau^3}{6} \begin{pmatrix}
    \nabla f(\vx^0) \\
    \vp^0
    \end{pmatrix}^T
    \mN(\vx^0)^T \left( \nabla^3 \gI(\tilde{\vx}^0,\tilde{\vp}^0)
    \mN(\vx^0)
    \begin{pmatrix}
    \nabla f(\vx^0) \\
    \vp^0
    \end{pmatrix}\right)\mN(\vx^0)
    \begin{pmatrix}
    \nabla f(\vx^0) \\
    \vp^0
    \end{pmatrix} \nonumber \\
    \leq & -\tau \begin{pmatrix}
    \nabla f(\vx^0) \\
    \vp^0
    \end{pmatrix}^T\mH(\vx^0) \begin{pmatrix}
    \nabla f(\vx^0) \\
    \vp^0
    \end{pmatrix}\nonumber \\
    &+\frac{\tau^2}{2}\begin{pmatrix}
    \nabla f(\vx^0) \\
    \vp^0
    \end{pmatrix}^T
    \mN(\vx^0)^T \nabla^2 \gI(\vx^0,\vp^0)
    \mN(\vx^0)
    \begin{pmatrix}
    \nabla f(\vx^0) \\
    \vp^0
    \end{pmatrix}\nonumber \\
    &+ \frac{\tau^3}{6} \begin{pmatrix}
    \nabla f(\vx^0) \\
    \vp^0
    \end{pmatrix}^T
    \left( D_0 
    \|\mN(\vx^0)\|_2^3 \left\|
    \begin{pmatrix}
    \nabla f(\vx^0) \\
    \vp^0
    \end{pmatrix} \right\|_2\right)
    \begin{pmatrix}
    \nabla f(\vx^0) \\
    \vp^0
    \end{pmatrix}\nonumber \\
    =& -\tau \begin{pmatrix}
    \nabla f(\vx^0) \\
    \vp^0
    \end{pmatrix}^T \mH(\vx^0) \begin{pmatrix}
    \nabla f(\vx^0) \\
    \vp^0
    \end{pmatrix}\nonumber \\
    &+\frac{\tau^2}{2}\begin{pmatrix}
    \nabla f(\vx^0) \\
    \vp^0
    \end{pmatrix}^T
    \mN(\vx^0)^T \nabla^2 \gI(\vx^0,\vp^0)
    \mN(\vx^0)
    \begin{pmatrix}
    \nabla f(\vx^0) \\
    \vp^0
    \end{pmatrix}\nonumber \\
    &+ \frac{\tau^3}{6} \begin{pmatrix}
    \nabla f(\vx^0) \\
    \vp^0
    \end{pmatrix}^T
      D_0 
    \|\mN(\vx^0)\|_2^3 \gI(\vx_0,\vp_0)^{1/2} 
    \begin{pmatrix}
    \nabla f(\vx^0) \\
    \vp^0
    \end{pmatrix} \nonumber \\
    \leq & -\tau \begin{pmatrix}
    \nabla f(\vx^0) \\
    \vp^0
    \end{pmatrix}^T \mH(\vx^0) \begin{pmatrix}
    \nabla f(\vx^0) \\
    \vp^0
    \end{pmatrix}\nonumber \\
    &+\frac{\tau^2}{2}\begin{pmatrix}
    \nabla f(\vx^0) \\
    \vp^0
    \end{pmatrix}^T
    \mN(\vx^0)^T \nabla^2 \gI(\vx^0,\vp^0)
    \mN(\vx^0)
    \begin{pmatrix}
    \nabla f(\vx^0) \\
    \vp^0
    \end{pmatrix}\nonumber \\
    &+ \frac{\tau^2 \delta}{6} \begin{pmatrix}
    \nabla f(\vx^0) \\
    \vp^0
    \end{pmatrix}^T
    \begin{pmatrix}
    \nabla f(\vx^0) \\
    \vp^0
    \end{pmatrix}\,,
\end{align}
where the last inequality is by our assumption on $(\vx^0,\vp^0)$. 
Using our assumptions on the lower bound of $\mH$ and the upper bound of $\mN^T \cdot \nabla^2 \gI \cdot \mN$, we obtain 
\begin{align}\label{eq:I0_I1}
    \gI(\vx^{1},\vp^{1})-\gI(\vx^{0},\vp^{0}) &\leq \big(-\tau \lambda + \frac{\tau^2 \delta}{6} + \frac{\tau^2 M_2 }{2} \big) \gI(\vx^{0},\vp^{0}) \nonumber \\
    &\leq \big(-\tau \lambda + \frac{\tau^2 (\delta+M_2)}{2}\big) \gI(\vx^{0},\vp^{0}) \nonumber \\
    &=\frac{1}{2}(a^2-2a)\frac{\lambda^2}{M_2+\delta}\gI(\vx^{0},\vp^{0})\,,
\end{align}
where we used $\tau = a \frac{\lambda}{M_2+\delta}$ for some $a\in (0,2)$. Hence,
$$\gI(\vx^{1},\vp^{1}) \leq \gI(\vx^{0},\vp^{0})\big(1+\frac{a^2-2a}{2}\frac{\lambda^2}{M_2+\delta}\big)\,.
$$
This proves the base case. Now suppose it holds that  
$$\gI(\vx^{n},\vp^{n}) \leq \gI(\vx^{0},\vp^{0})\big(1+\frac{a^2-2a}{2}\frac{\lambda^2}{M_2+\delta}\big)^n\,,
$$
for some $n\geq 1$. In particular, this implies that 
$$\gI(\vx^{n},\vp^{n}) < \gI(\vx^{0},\vp^{0})\,,
$$ 
which yields 
$$\tau D_0 \|\mN(\vx)\|_2^3 \gI(\vx^n,\vp^n)^{1/2} < \tau D_0 \|\mN(\vx)\|_2^3 \gI(\vx^0,\vp^0)^{1/2} \leq \delta \,.
$$
Then, repeating the derivation of \eqref{eq:I_third_taylor_inexact} and \eqref{eq:I0_I1} yields
$$\gI(\vx^{n+1},\vp^{n+1}) \leq \gI(\vx^{n},\vp^{n})\big(1+\frac{a^2-2a}{2}\frac{\lambda^2}{M_2+\delta}\big)\,.
$$ 
Combining with our induction hypothesis, we conclude that 
$$ \gI(\vx^{n+1},\vp^{n+1}) \leq \gI(\vx^{0},\vp^{0})\big(1+\frac{a^2-2a}{2}\frac{\lambda^2}{M_2+\delta}\big)^{n+1}\,.
$$ 
The proof is complete by induction.
\end{proof}
\begin{corollary}\label{cor:H}
Suppose Assumption \ref{assumption:mu_L} and Assumption \ref{assumption:L'} hold. When $\sigma=\tau$, $\gamma = \frac{1-\sigma \mu }{\mu}, \varepsilon = 1, A =  \frac{\mu+L}{2+(\mu+L)\varepsilon\gamma}$, we have 
$$\mH(\vx) \succeq \frac{\mu}{4}\sI.
$$
\end{corollary}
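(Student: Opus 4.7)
The plan is to exploit an algebraic cancellation that occurs under the prescribed parameter choice, reducing everything to a $\sigma$-independent $2 \times 2$ problem. Two identities drive the reduction. From $\sigma + \gamma = 1/\mu$ we have $(\sigma + \gamma)A = A/\mu$; from the defining equation $A[2 + (\mu+L)\gamma] = \mu + L$ we have $\gamma A = 1 - 2A/(\mu+L)$, hence $1 - \gamma A = 2A/(\mu+L)$. The second identity is the crucial one: it shows that the off-diagonal scalar $1 - \gamma A$ is $\sigma$-independent. Substituting into the formula for $\mN(\vx)$ (with $\sigma = \tau$, $\varepsilon = 1$) and then for $\mH(\vx)$, and using symmetry of $\mC_0(\vx) = \nabla^2 f(\vx)\mB(\vx)\nabla^2 f(\vx)$, I would obtain the clean factorization
\begin{equation*}
\mH(\vx) \;=\; \frac{A}{1 + \sigma A}\begin{pmatrix} \mC_0(\vx)/\mu & \mC_0(\vx)/(\mu+L) - \tfrac{1}{2}\sI \\ \mC_0(\vx)/(\mu+L) - \tfrac{1}{2}\sI & \sI \end{pmatrix}.
\end{equation*}
Call the bracketed matrix $M_0(\vx)$; it does not depend on $\sigma$. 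A further computation substituting $\sigma = 1/\mu - \gamma$ into $1 + \sigma A$ and using $\gamma A = 1 - 2A/(\mu+L)$ yields $1 + \sigma A = A(3\mu+L)/[\mu(\mu+L)]$, so the scalar prefactor reduces to $\mu(\mu+L)/(3\mu+L)$. Consequently, the target bound $\mH(\vx) \succeq (\mu/4)\sI$ is equivalent to the $\sigma$-free inequality $M_0(\vx) \succeq c\,\sI$ with $c := (3\mu+L)/[4(\mu+L)]$.

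To prove the reduced inequality I would diagonalize $\mC_0(\vx)$ using Assumption~\ref{assumption:mu_L} and work in its eigenbasis. In each eigenspace with eigenvalue $\xi \in [\mu, L]$, the problem becomes showing $M_{0,\xi} - c\sI \succeq 0$ for a $2 \times 2$ symmetric matrix. Both diagonal entries, $\xi/\mu - c$ and $1 - c$, are non-negative because $c \leq 1$ (equivalent to $3\mu + L \leq 4(\mu+L)$). It remains to verify the determinant condition; introducing $\eta := 2\xi - (\mu+L) \in [\mu-L, L-\mu]$ so that the off-diagonal entry becomes $\eta/[2(\mu+L)]$, this determinant condition takes the form
\begin{equation*}
P(\eta) \;:=\; -4\mu\eta^2 + 2(\mu+L)(\mu+3L)\,\eta + (2L^2 + 3\mu L - \mu^2)(\mu+3L) \;\geq\; 0.
\end{equation*}
Since $P$ opens downward, I would locate its vertex $\eta^* = (\mu+L)(\mu+3L)/(4\mu)$ and observe $\eta^* - (L-\mu) = (5\mu^2 + 3L^2)/(4\mu) > 0$, so $\eta^*$ lies to the right of the interval and $P$ is monotone increasing on $[\mu-L, L-\mu]$. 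Thus it suffices to verify $P(\mu - L) \geq 0$.

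The main obstacle, such as it is, is this final endpoint check; but it is elementary once one notices the simplification $(2L^2 + 3\mu L - \mu^2) - 2(\mu+L)(L-\mu) = \mu(\mu+3L)$, which collapses $P(\mu-L)$ to $\mu[(\mu+3L)^2 - 4(L-\mu)^2]$. Expanding and factoring yields $3\mu(5L - \mu)(\mu + L/3)$, non-negative for $0 < \mu \leq L$. The more conceptually substantive step is really the recognition of the factorization $\mH = \tfrac{A}{1+\sigma A}M_0$: once the $\sigma$-dependence is stripped from the matrix part, what remains is routine $2\times 2$ linear algebra, and the prefactor arithmetic conspires to cancel exactly against the constant $c$, yielding the clean $\mu/4$ lower bound.
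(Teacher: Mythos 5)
Your proof is correct, and it takes a genuinely different route from the paper's. The paper proves this corollary by writing out the quadratic form of $(1+\sigma\varepsilon A)\mH(\vx)$, observing it is ``almost the same'' as the continuous-time expression \eqref{eq:Lyapunov_continuous_derivative}, and then rerunning the procedure of Theorem \ref{thm:continuous_Lyapunov_eps}: Lemma \ref{lemma:matrix_CS} bounds the cross term eigenvalue-wise (a Young's-inequality-type estimate), the resulting piecewise-linear functions of $\xi$ are minimized at the endpoints of $[\mu,L]$, and the parameter identities yield $\lambda \geq \frac{\mu}{2}\frac{1+A\sigma/2}{1+\sigma A} \geq \frac{\mu}{4}$. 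You instead isolate the two identities $(\sigma+\gamma)A = A/\mu$ and $1-\gamma A = 2A/(\mu+L)$, which give the exact factorization $\mH = \frac{A}{1+\sigma A}M_0$ with $M_0$ independent of $\sigma$ and prefactor $\frac{A}{1+\sigma A} = \frac{\mu(\mu+L)}{3\mu+L}$ (I verified this identity; it is correct, and notably equals the value of $A$ in the continuous-time theorem), and then certify $M_0 \succeq c\,\sI$ by the exact $2\times 2$ criterion (nonnegative diagonal plus determinant) in the eigenbasis of $\mC_0$, reducing to the endpoint check $P(\mu-L) = \mu(3\mu+L)(5L-\mu) \geq 0$, whose algebra I also verified. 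What your route buys: it is self-contained (no appeal to the continuous-time theorem), it makes the $\sigma$-independence of the bound structurally transparent rather than an artifact of cancellation at the end, and because the determinant criterion is exact rather than an inequality, your argument could be pushed to the sharp constant $\mu/2$ (your target $c = \frac{3\mu+L}{4(\mu+L)}$ corresponds to $\mu/4$, but $c = \frac{3\mu+L}{2(\mu+L)}$ makes the endpoint determinant exactly zero). What the paper's route buys: it reuses machinery already established, emphasizing that the discrete-time bound is the continuous-time Lyapunov computation in disguise, and it avoids the explicit quadratic-polynomial bookkeeping in $\eta$.
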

\begin{proof}
 By definition of $\mH$, we can compute 
$$ (1+\sigma \varepsilon A)\cdot\mH(\vx) = \begin{pmatrix}
    \mC_0(\vx)(\sigma\mA+\gamma \mA) & \frac{1}{2}\mC_0(\vx)(1-\varepsilon\gamma A)-\frac{1}{2} \eta\mA\\
    \frac{1}{2}\mC_0(\vx)(1-\varepsilon\gamma A)-\frac{1}{2} \eta\mA &\eta\varepsilon \mA
\end{pmatrix}\,,
$$
where $\eta=\sigma/\tau=1$, $\mC_0(\vx) = \nabla^2 f(\vx) \mB(\vx) \nabla^2f(\vx)$. We want to find some constant $\lambda>0$, such that 
$$
 \begin{pmatrix}
     \vz \\ \vw 
 \end{pmatrix}^T \mH(\vx) \begin{pmatrix}
     \vz \\ \vw 
 \end{pmatrix} \geq \lambda (\|\vz\|^2+\|\vw\|^2)\,.
$$
Observe that 
\begin{align}
    &\begin{pmatrix}
     \vz \\ \vw 
 \end{pmatrix}^T \mH(\vx) \begin{pmatrix}
     \vz \\ \vw 
 \end{pmatrix} -\lambda (\|\vz\|^2+\|\vw\|^2) \nonumber \\
 &= \vz^T \big(\mC_0 (\gamma+\sigma) A/(1+\sigma \varepsilon A) -\lambda \sI \big)\vz  + \vw^T\big(\varepsilon A /(1+\sigma \varepsilon A)  - \lambda \sI \big) \vw \nonumber \\
    &\qquad + \vz^T \big(-A + \mC_0(\sI-\varepsilon \gamma A ) \big)\vw/(1+\sigma \varepsilon A) \,,
\end{align}
which is almost the same as \eqref{eq:Lyapunov_continuous_derivative}. Thus, following a similar procedure in Theorem \ref{thm:continuous_Lyapunov_eps} with the provided parameters, we obtain that  
$$
\lambda \geq \frac{\mu}{2}\frac{1+\frac{A\sigma}{2}}{1+\sigma A} \geq \frac{\mu}{4}\,.
$$
This implies 
\begin{align}
    \mH(\vx) \succeq \frac{\mu}{4}\sI\,.
\end{align}
\end{proof}
\begin{corollary}\label{cor:NIN}
Let $f:\RR^d \to \RR$ be a $\mathcal{C}^4$ strongly convex function. Suppose Assumption \ref{assumption:mu_L} and Assumption \ref{assumption:L'} hold. If $\sigma=\tau$, $\gamma = \frac{1-\sigma \mu }{\mu}, \varepsilon = 1, A =  \frac{\mu+L}{2+(\mu+L)\varepsilon\gamma}$, we have  
\begin{enumerate}
\item[(1)] 
$$
\| \mN(\vx) \|_2 \leq \frac{\max\{L,1\} \cdot \big(A(\sigma + 2\gamma + 2) + 1\big)}{(1+\sigma A)}\,.
$$
    \item[(2)] $$
\mN(\vx)^T \nabla^2 \gI(\vx,\vp)
    \mN(\vx) \preceq  \frac{(3+\sigma A+2A)^2}{(1+\sigma  A)^2}\cdot  \max\{L',1\}\cdot  \sI\, . 
$$
\end{enumerate}

\end{corollary}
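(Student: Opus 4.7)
The plan is to substitute the specified parameter values into the definition of $\mN(\vx)$ and then reduce both claims to elementary norm inequalities, drawing on Assumption \ref{assumption:mu_L} for the operator norm bound and on Assumption \ref{assumption:L'} for the positive-semidefinite inequality. With $\sigma = \tau$ and $\varepsilon = 1$, the matrix collapses to
\begin{equation*}
\mN(\vx) = \frac{1}{1+\sigma A}\begin{pmatrix} a\,\mC(\vx) & b\,\mC(\vx) \\ -A\sI & A\sI \end{pmatrix}, \qquad a := (\sigma+\gamma)A,\quad b := 1 - \gamma A.
\end{equation*}
A direct computation using $A = (\mu+L)/(2+(\mu+L)\gamma)$ gives $\gamma A = (\mu+L)\gamma/(2+(\mu+L)\gamma) \in [0,1)$, hence $|a|+|b| \leq 1 + \sigma A + 2\gamma A$ and in particular $|a|+|b|+2A \leq 3 + \sigma A + 2A$.

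For part (1), I apply $\mN(\vx)$ to an arbitrary $(\vu,\vv)^T \in \RR^{2d}$, obtaining $\mN(\vx)(\vu,\vv)^T = (\mC(a\vu+b\vv),\, A(\vv-\vu))^T/(1+\sigma A)$. Taking norms block by block, using the triangle inequality, and invoking Assumption \ref{assumption:mu_L} (under which the preconditioner structure yields $\|\mC(\vx)\|_2 \leq \max\{L,1\}$) produces
\begin{equation*}
\|\mN(\vx)(\vu,\vv)^T\| \leq \frac{\max\{L,1\}(|a|+|b|+2A)}{1+\sigma A}\sqrt{\|\vu\|^2+\|\vv\|^2},
\end{equation*}
and the $|a|+|b|$ estimate above, rearranged as $A(\sigma+2\gamma+2)+1$, gives the claimed bound on $\|\mN(\vx)\|_2$.

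For part (2), the key observation is that the quadratic form of $\mN^T \nabla^2\gI\,\mN$ decouples cleanly into a sandwich piece and a scalar piece. Writing $\mR(\vx) := \nabla^3 f(\vx)\nabla f(\vx) + (\nabla^2 f(\vx))^2$, a block multiplication yields
\begin{equation*}
(\vu,\vv)^T \mN^T\nabla^2\gI\,\mN(\vu,\vv) = \frac{(a\vu+b\vv)^T \mC^T \mR(\vx) \mC (a\vu+b\vv) + A^2\|\vv-\vu\|^2}{(1+\sigma A)^2}.
\end{equation*}
Assumption \ref{assumption:L'} controls the first summand by $L' \|a\vu+b\vv\|^2$; combining with $A^2\|\vv-\vu\|^2$, extracting the factor $\max\{L',1\}$, and using $x^2+y^2 \leq (x+y)^2$ followed by Cauchy--Schwarz gives the upper bound $\max\{L',1\}(|a|+|b|+2A)^2(\|\vu\|^2+\|\vv\|^2)/(1+\sigma A)^2$. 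Replacing $|a|+|b|+2A$ by $3 + \sigma A + 2A$ via $\gamma A \leq 1$ then delivers the stated matrix inequality.

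The only step that is not pure bookkeeping is passing from Assumption \ref{assumption:mu_L}, which bounds the symmetric composite $\mC_0 = \nabla^2 f\,\mB\,\nabla^2 f$, to a norm estimate on $\mC = \mB\nabla^2 f$ itself in part (1); this requires an implicit structural condition on $\mB$ (symmetric positive definite with uniformly bounded spectrum) that is consistent with the paper's setting. Part (2) sidesteps this concern entirely because Assumption \ref{assumption:L'} is phrased exactly in terms of the sandwich $\mC^T (\,\cdot\,) \mC$ that appears in the quadratic form.
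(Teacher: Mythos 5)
Your proof is correct and follows essentially the same route as the paper's: both exploit the factorization of $(1+\sigma A)\mN(\vx)$ into the block-diagonal factor $\mathrm{diag}(\mC(\vx),\sI)$ times a coefficient matrix bounded by $A(\sigma+2\gamma+2)+1$, use $\gamma A<1$ to pass to $3+\sigma A+2A$, and for part (2) bound the middle block $\mC(\vx)^T\big(\nabla^3 f(\vx)\nabla f(\vx)+(\nabla^2 f(\vx))^2\big)\mC(\vx)\preceq L'\sI$ via Assumption \ref{assumption:L'}; you merely carry out the estimates on vectors and quadratic forms where the paper uses operator-norm submultiplicativity of the factored matrices. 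The caveat you flag in part (1) --- that Assumption \ref{assumption:mu_L} controls $\mC_0=\nabla^2 f\,\mB\,\nabla^2 f$ rather than $\mC=\mB\nabla^2 f$, so the bound $\|\mathrm{diag}(\mC(\vx),\sI)\|_2\leq\max\{L,1\}$ requires an additional structural condition on $\mB$ --- is equally present, and unacknowledged, in the paper's own proof, so your treatment is on the same footing.
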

\begin{proof}

We can decompose 
$$(1+\sigma  A)\cdot \mN(\vx) = \begin{pmatrix}
    \mB(\vx) \nabla^2 f(\vx) & 0 \\
    0 & \sI 
\end{pmatrix}
\begin{pmatrix}
    (\sigma+\gamma)\mA & (\sI-\gamma\mA) \\
    - \mA &   \mA
\end{pmatrix}\,.
$$
Observe that 
$$
\begin{pmatrix}
    (\sigma+\gamma)\mA & (\sI-\gamma\mA) \\
    - \mA &   \mA
\end{pmatrix} = \begin{pmatrix}
    (\sigma+\gamma)\sI & -\gamma \sI \\
    -\sI & \sI 
\end{pmatrix} \cdot \begin{pmatrix}
    \mA & 0 \\
    0 &\mA 
\end{pmatrix} + \begin{pmatrix}
    0&\sI\\
    0&0
\end{pmatrix}\,.
$$
Therefore, \begin{align} \label{eq:N_norm}
\bigg\| \begin{pmatrix}
    (\sigma+\gamma)\mA & (\sI-\gamma\mA) \\
    - \mA &   \mA
\end{pmatrix} \bigg\|_2 & \leq  A \bigg\|\begin{pmatrix}
    (\sigma+\gamma)\sI & -\gamma \sI \\
    -\sI & \sI 
\end{pmatrix} \bigg\|_2 + \bigg\|\begin{pmatrix}
    0&\sI\\
    0&0
\end{pmatrix} \bigg\|_2 \nonumber \\
&\leq A \bigg\|\begin{pmatrix}
    (\sigma+\gamma)\sI & -\gamma \sI \\
    -\sI & \sI 
\end{pmatrix} \bigg\|_2 + 1 \nonumber \\
&\leq A(\sigma + 2\gamma + 2) + 1\,.
\end{align}
To get the last inequality, we consider $(\vz,\vw)$ such that $\|\vz\|^2 + \|\vw\|^2 = 1$. Thus 
\begin{align*}
    \bigg\| \begin{pmatrix}
    (\sigma+\gamma)\sI & -\gamma \sI \\
    -\sI & \sI 
\end{pmatrix} \begin{pmatrix}
    \vz \\ \vw
\end{pmatrix} \bigg\|_2 &= \bigg\| \begin{pmatrix}
    (\sigma + \gamma)\vz - \gamma \vw \\ -\vz+\vw 
\end{pmatrix} \bigg\| \\
&\leq \sigma \bigg\| \begin{pmatrix}
    \vz \\ \vw 
\end{pmatrix}\bigg\| + \gamma \| \vz-\vw \| + \| \vz - \vw\| \\ 
&\leq \sigma + 2\gamma + 2 \,. 
\end{align*}
We now have 
\begin{align*}
    (1+\sigma  A) \| \mN(\vx)\|_2  &\leq  \bigg\|\begin{pmatrix}
    \mB(\vx) \nabla^2 f(\vx) & 0 \\
    0 & \sI 
\end{pmatrix}\bigg\|_2
\bigg\| \begin{pmatrix}
    (\sigma+\gamma)\mA & (\sI-\gamma\mA) \\
    - \mA &   \mA
\end{pmatrix} \bigg\|_2 \\ 
&\leq \max\{L,1\} \cdot \big(A(\sigma + 2\gamma + 2) + 1\big)\,.
\end{align*}
This proves part (1) of our Corollary. It follows that 
\begin{align}\label{eq:N_HessianI_N}
    &\mN(\vx)^T \nabla^2 \gI(\vx,\vp)
    \mN(\vx)\nonumber \\
    &= \frac{1}{(1+\sigma A)^2} \begin{pmatrix}
    (\sigma+\gamma)\mA &  - \mA\\ 
    (\sI-\gamma\mA) &   \mA
\end{pmatrix} \cdot \begin{pmatrix}
    \mC(\vx)^T\big(\nabla^3 f(\vx) \nabla f(\vx)+(\nabla^2 f(\vx))^2\big)\mC(\vx) & 0 \\
    0& \sI
    \end{pmatrix}\nonumber \\
    &\hspace{2cm}  \cdot\begin{pmatrix}
    (\sigma+\gamma)\mA & (\sI-\gamma\mA) \\
    - \mA &   \mA
\end{pmatrix}\,,
\end{align}
where we recall $\mC(\vx) =  \mB(\vx) \nabla^2 f(\vx)$. 

By assumption, there exists $L'>0$, such that 
$$
\mC(\vx)^T\big(\nabla^3 f(\vx) \nabla f(\vx)+(\nabla^2 f(\vx))^2\big)\mC(\vx) \preceq L' \sI\, ,
$$
for all $\vx$. Then, combining \eqref{eq:N_HessianI_N} and \eqref{eq:N_norm}, we obtain that 
\begin{align}
    \|\mN(\vx)^T \nabla^2 \gI(\vx,\vp)
    \mN(\vx)\|_2 &\leq \frac{\big(A(\sigma+2\gamma + 2)+1\big)^2}{(1+\sigma  A)^2}\cdot  \max\{L',1\} \nonumber \\ 
    &\leq \frac{(3+\sigma A+2A)^2}{(1+\sigma  A)^2}\cdot  \max\{L',1\}\,,
\end{align}
where we have used $\gamma A <1$ to derive the last inequality. 
\end{proof}

\begin{theorem}[Restatement of Theorem \ref{thm:informal}]\label{thm:restatement}
Suppose Assumption \ref{assumption:mu_L} and Assumption \ref{assumption:L'} hold. Let $\sigma=\tau$, $\gamma = \frac{1-\sigma \mu }{\mu}, \varepsilon = 1, A =  \frac{\mu+L}{2+(\mu+L)\varepsilon\gamma}$. And suppose \peqref{condition:x0p0} holds for some $\delta>0$ and all $\vx$. If $\tau = \frac{1}{4}\frac{\mu}{\delta + 36\max\{L',1\}}$, then 
$$
\gI(\vx^n,\vp^n) \leq \gI(\vx^0,\vp^0) \big(1 -\frac{\mu^2/32}{\delta + 36 \max\{L',1\}}\big)^n\, .
$$
\end{theorem}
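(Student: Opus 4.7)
The proof will assemble three earlier results: Corollary \ref{cor:H}, Corollary \ref{cor:NIN}(2), and Theorem \ref{thm:discrete_exp_decay_2}. The first two supply the constants $\lambda$ and $M_2$ required by the hypotheses of the third, and the final rate is then obtained by instantiating Theorem \ref{thm:discrete_exp_decay_2} at the choice $a = 1 \in (0,2)$.

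I would proceed in three steps. First, invoke Corollary \ref{cor:H} under the prescribed parameter choices to obtain $\mH(\vx) \succeq (\mu/4)\sI$, which supplies $\lambda = \mu/4$. Second, invoke Corollary \ref{cor:NIN}(2) to get
$$ \mN(\vx)^T \nabla^2 \gI(\vx,\vp)\, \mN(\vx) \preceq \frac{(3+\sigma A + 2A)^2}{(1+\sigma A)^2}\, \max\{L',1\}\, \sI, $$
and collapse the prefactor to $36$, identifying $M_2 = 36\max\{L',1\}$. Third, plug $\lambda$, $M_2$, and the prescribed stepsize into Theorem \ref{thm:discrete_exp_decay_2}: by construction $\tau = (\mu/4)/(\delta+36\max\{L',1\}) = \lambda/(M_2+\delta)$, so one is exactly in the case $a=1$, and the resulting geometric factor is $1 + (1-2)\lambda^2/(2(M_2+\delta)) = 1 - \mu^2/(32(\delta+36\max\{L',1\}))$, as claimed. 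The initial-data hypothesis \peqref{condition:x0p0} is inherited verbatim from the theorem statement, so it triggers Theorem \ref{thm:discrete_exp_decay_2} without further work.

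The main obstacle is the second step, i.e. verifying that the prefactor satisfies $(3+\sigma A+2A)/(1+\sigma A) \leq 6$. I would reduce this to the cleaner inequality $A \leq 1$: once $A \leq 1$, the numerator is at most $5 + \sigma A \leq 5(1+\sigma A)$, giving a prefactor of at most $5 \leq 6$. To establish $A \leq 1$, I would substitute $\gamma = (1-\sigma\mu)/\mu$ into $A = (\mu+L)/(2+(\mu+L)\varepsilon\gamma)$, which rearranges to $A = \mu(\mu+L)/(3\mu + L - \sigma\mu(\mu+L))$. Then $\mu \leq 1$ from Assumption \ref{assumption:mu_L} gives the unperturbed inequality $\mu^2 + \mu L \leq \mu + L \leq 3\mu + L$ with slack $2\mu$, and the smallness of the prescribed $\tau = \sigma$ absorbs the perturbation $\sigma\mu(\mu+L)$ (Assumption \ref{assumption:L'} forces $\max\{L',1\}$ to dominate the relevant Hessian norms, keeping $\sigma(\mu+L)$ well below $2$). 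Everything else in the argument is routine algebraic bookkeeping and a single application of Theorem \ref{thm:discrete_exp_decay_2}.
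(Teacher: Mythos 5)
Your proposal is correct and takes essentially the same route as the paper: both proofs assemble Corollary \ref{cor:H} (giving $\lambda = \mu/4$), Corollary \ref{cor:NIN}(2) with the prefactor collapsed to $36$ (giving $M_2 = 36\max\{L',1\}$), and then instantiate Theorem \ref{thm:discrete_exp_decay_2} at $a=1$ with $\tau = \lambda/(M_2+\delta)$. The only cosmetic difference is in bounding the prefactor: the paper notes $\sigma < 1/36$, hence $\gamma \geq 35/36$ and $A < 1/\gamma \leq 36/35$, so that $3+\sigma A + 2A < 6$ directly, whereas you prove the slightly stronger bound $A \leq 1$ by substituting $\gamma = (1-\sigma\mu)/\mu$ into the formula for $A$ --- both are valid pieces of routine bookkeeping resting on the same facts ($\mu \leq 1$ and $\mu \leq L \leq L'$) that the paper invokes.
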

\begin{proof}
By Assumption \ref{assumption:mu_L} and Assumption \ref{assumption:L'}, we have $\mu \leq L \leq L'$. Thus $\mu/L'\leq 1$ and $\sigma = \tau <1/36$. Moreover, 
$$
\gamma = \frac{1}{\mu} - \sigma \geq 1-\frac{1}{36}=\frac{35}{36}\,.
$$
 And 
$$
A = \frac{\mu+L}{2+(\mu+L)\varepsilon\gamma} < \frac{1}{\gamma} \leq \frac{36}{35}\,.
$$
Then it follows  
$$
\frac{3+\sigma A+2A}{1+\sigma A} \leq 3+\sigma A+2A < 6\,.
$$
By Corollary \ref{cor:NIN}, we have 
$$
\|\mN(\vx)^T \nabla^2 \gI(\vx,\vp)
    \mN(\vx)\|_2 \leq 36 \max\{L',1\}\,.
$$
Combining this with  Theorem \ref{thm:discrete_exp_decay_2} and Corollary \ref{cor:H}, we finish the proof.
    
\end{proof}
\begin{remark}
    The choice of parameters in Theorem \ref{thm:restatement} may not be optimal. The main purpose of Theorem \ref{thm:restatement} is to show the existence of geometric convergence in Algorithm \ref{alg:pdd}. 
\end{remark}

\section{Numerical experiment}\label{section:numerics}
We test our PDD algorithm using several convex and non-convex functions and compare the results with other commonly used optimizers, such as gradient descent, Nesterov's accelerated gradient (NAG), IGAHD (inertial gradient algorithm with Hessian damping) \citep{attouch2020first}, and IGAHD-SC (inertial gradient algorithm with Hessian damping for strongly convex functions) \citep{attouch2020first}. 

\subsection{Summary of algorithms}
For reference, we write down the iterations of gradient descent, NAG, IGAHD-SC, and IGAHD for better comparison. 

\textbf{Gradient descent}: 
\begin{equation*}
    \vx^{n+1} = \vx^n - \tau_{\textrm{gd}} \nabla f(\vx^n)\, , 
\end{equation*}
where $\tau_{\textrm{gd}}>0$ is a stepsize. 

\textbf{NAG}: 
\begin{align}
    \vy^{n+1} &= \vx^n - \tau_{\textrm{nag}} \nabla f(\vx^n) \,,\nonumber \\ 
    \vx^{n+1} &= \vy^{n+1} + \beta_{\textrm{nag}}(\vy^n - \vy^{n-1})\, , \nonumber 
\end{align} 
where $\tau_{\textrm{nag}}>0$ is a stepsize, and $\beta_{\textrm{nag}}>0$ is a parameter. 

\textbf{IGAHD}: Suppose $\nabla f$ is $L_1$-Lipschitz. 
\begin{align*}
    \vy^n &= \vx^n + \alpha_n(\vx^n - \vx^{n-1}) - \beta^{(1)} \sqrt{\tau_{\textrm{att}}}(\nabla f(\vx^n) - \nabla f(\vx^{n-1})) - \frac{\beta^{(1)} \sqrt{\tau_{\textrm{att}}}}{n}\nabla f(\vx^{n-1})\,, \\
    \vx^{n+1} &= \vy^n - \tau_{\textrm{att}}\nabla f(\vy^n) \,.
\end{align*}
Here $\alpha_n = 1-\frac{\alpha}{n}$ for some $\alpha\geq 3$. $\beta^{(1)}$ needs to satisfy 
$$
0\leq \beta^{(1)} \leq 2\sqrt{\tau_{\textrm{att}}}\,.
$$
And $\tau_{\textrm{att}}>0$ is a stepsize, which needs to satisfy 
$$
\tau_{\textrm{att}} \leq \frac{1}{L_1}\,. 
$$
\begin{remark}
    As mentioned earlier, in each iteration of IGAHD, $\nabla f(\cdot)$ is evaluated twice: at $\vx^n$ and at $\vy^n$. 
    This differs from one gradient evaluation in gradient descent, NAG, and our method \eqref{eq:discrete_linearized_pdhg}. \citet{chen2021unified} proposed a slightly different algorithm from IGAHD that only requires one gradient evaluation  in each iteration. %
\end{remark}
\textbf{IGHD-SC}: Suppose $f$ is $m_1$-strongly convex and $\nabla f$ is $L_1$-Lipschitz. 
\begin{equation*}
    \vx^{n+1} = \vx^n + \frac{1-\sqrt{m_1 \tau_{\textrm{att}}}}{1+\sqrt{m_1 \tau_{\textrm{att}}}}(\vx^n-\vx^{n-1}) - \frac{\beta^{(2)}\sqrt{\tau_{\textrm{att}}}}{1+\sqrt{m_1 \tau_{\textrm{att}}}}(\nabla f(\vx^n) - \nabla f(\vx^{n-1})) - \frac{\tau_{\textrm{att}}}{1+\sqrt{m_1 \tau_{\textrm{att}}}}\nabla f(\vx^n)\,.
\end{equation*}
Here $\beta^{(2)}$ and $L_1$ need to satisfy 
\begin{align}
    \beta^{(2)} \leq \frac{1}{\sqrt{m_1}}\, ,\quad 
    L_1 \leq \min \Big\{\frac{\sqrt{m_1}}{8\beta^{(2)}},\frac{\frac{\sqrt{m_1}}{2\tau_{\textrm{att}}}+\frac{m_1}{\sqrt{\tau_{\textrm{att}}}}}{2\beta^{(2)} m_1 + \frac{1}{\sqrt{\tau_{\textrm{att}}}}+ \frac{\sqrt{m_1}}{2}}   \Big\}  \,.
\end{align}

\subsection{regularized log-sum-exp}
Consider the regularized log-sum-exp function $$
f(\vx) = \log\left( \sum_{i=1}^n \exp(\vq_i ^T \vx) \right) + \frac{1}{2}\vx^T \mQ \vx \,,
$$
where $n=100$, $\mQ = \mQ^T \succ 0$ and $\vq_i^T$ is the ith row of $\mQ$. $\mQ$ is chosen to be diagonally dominant, i.e. $Q_{i,i} > \sum_{j\neq i} |Q_{i,j}|$. In this case, we may choose the diagonal preconditioner $\mC(\vx) = \big(\textrm{diag} (\mQ)\big)^{-1}$. We compare the performance of gradient descent, preconditioned gradient descent, PDD with $\mC(\vx) = \sI$, PDD with diagonal preconditioner, NAG, and IGAHD-SC (inertial gradient algorithm with Hessian damping for strongly convex functions) by \citet{attouch2020first} methods for minimizing $f$. The stepsize of gradient descent is determined by $\tau_{\textrm{gd}} = \frac{2}{\lambda_1*3 + \lambda_n}$, where $\lambda_1$ and $\lambda_n$ are the maximum and minimum eigenvalues of $\mQ$, respectively. For a pure quadratic objective function, $\vx^T \mQ \vx$, the optimal stepsize of gradient descent is $\frac{2}{\lambda_1 + \lambda_n}$. However, since our objective function also contains a log-sum-exp term, we slightly change the stepsize. Otherwise, gradient descent will not converge. Similarly, when deciding the parameters for NAG, we choose $\tau_{\textrm{nag}} = \frac{4}{30*\lambda_1 + \lambda_n}$ and $\beta_{\textrm{nag}} = \frac{\sqrt{3\kappa'+1}-2}{\sqrt{3\kappa'+1}+2}$, where $\kappa' = 10\lambda_1 / \lambda_n$, which is slightly smaller than the optimal parameters of NAG for a purely quadratic function to guarantee convergence. For PDD with $\mC(\vx) = \sI$, we choose $\tau_{\textrm{pdd}}=\sigma_{\textrm{pdd}} = \frac{2}{\lambda_1 + \lambda_n}$, $\varepsilon = 1$, $A = 10$, $\omega = 1$. For PDD with diagonal preconditioner $\mC(\vx) = \big(\textrm{diag} (\mQ)\big)^{-1}$, we choose $\tau_{\textrm{pdd}}=\sigma_{\textrm{pdd}} = 0.5$, $\varepsilon = 1$, $A = 1$, $\omega = 1$. We use the same $\mC(\vx) = \big(\textrm{diag} (\mQ)\big)^{-1}$ as a preconditioner for gradient descent. The stepsize for preconditioned gradient descent is chosen to be the same as $\tau_{\textrm{pdd}} = 0.5$. For IGAHD-SC (`att'), we need $m_1$ as the smallest eigenvalue of $\nabla^2 f(\vx)$. In this example, we may estimate $m_1$ as the smallest eigenvalue of $\mQ$. And $\tau_{\textrm{att}}= 0.0016$ via grid search. $\beta^{(2)}$ in IGAHD-SC is found by solving (see Theorem 11 Eq.~(26) of \citet{attouch2020first})
$$
\frac{\sqrt{m_1}}{8\beta^{(2)}} = \frac{\frac{\sqrt{m_1}}{2\tau_{\textrm{att}}}+\frac{\sqrt{m_1}}{\sqrt{\tau_{\textrm{att}}}}}{2\beta^{(2)} m_1 + \frac{1}{\sqrt{\tau_{\textrm{att}}}}+\frac{\sqrt{m_1}}{2}}\,,
$$
which gives 
\begin{equation}\label{eq:beta_equation}
    \beta^{(2)} = \frac{\sqrt{\tau_{\textrm{att}}}+\tau_{\textrm{att}}\sqrt{m_1}/2}{4+8\sqrt{m_1}\sqrt{\tau_{\textrm{att}}}-2m_1 \tau_{\textrm{att}}}\,.
\end{equation}

The initial condition is $\vx^0 = \textrm{np.ones}(n)*0.1$. The result is presented in Fig.~\ref{fig:convex1}. 

\subsection{Quadratic minus cosine function}
Consider the function 
$$ f(\vx) = \|\vx\|^2 - \cos(\vc^T \vx)\,,
$$
where $\vc$ is a vector in $\RR^{100}$ with $\|\vc\|^2 = 1.9$. Then a direct calculation shows that $ 0.1\sI \preceq \nabla^2 f(\vx) \preceq 3.9 \sI$ for any $\vx$. This allows us to choose the optimal stepsize for gradient descent and NAG. When minimizing $f$ using gradient descent, we can choose $\tau_{\textrm{gd}} = \frac{2}{0.1+3.9} = 0.5$. Meanwhile, for NAG, we may choose $\tau_{\textrm{nag}} = \frac{4}{3*3.9 + 0.1}$, and $\beta = \frac{\sqrt{3\kappa+1}-2}{\sqrt{3\kappa+1}+2}$, where $\kappa = 3.9 / 0.1$. For PDD with $\mC(\vx) = \sI$, we choose $\tau_{\textrm{pdd}}=\sigma_{\textrm{pdd}} = 0.5$, $\varepsilon = 1$, $A = 1$, $\omega = 1$. For IGAHD-SC (`att'), we choose $m_1=0.1$, $\tau_{\textrm{att}} = 0.55$ via grid search and $\beta^{(2)}$ is given by \eqref{eq:beta_equation}. The initial condition is $\vx^0 = \textrm{np.ones}(n)*5$. The result is presented in Fig.~\ref{fig:convex2}. 
\begin{figure}
\centering
\begin{subfigure}{.5\textwidth}
  \centering
  \includegraphics[width=\linewidth]{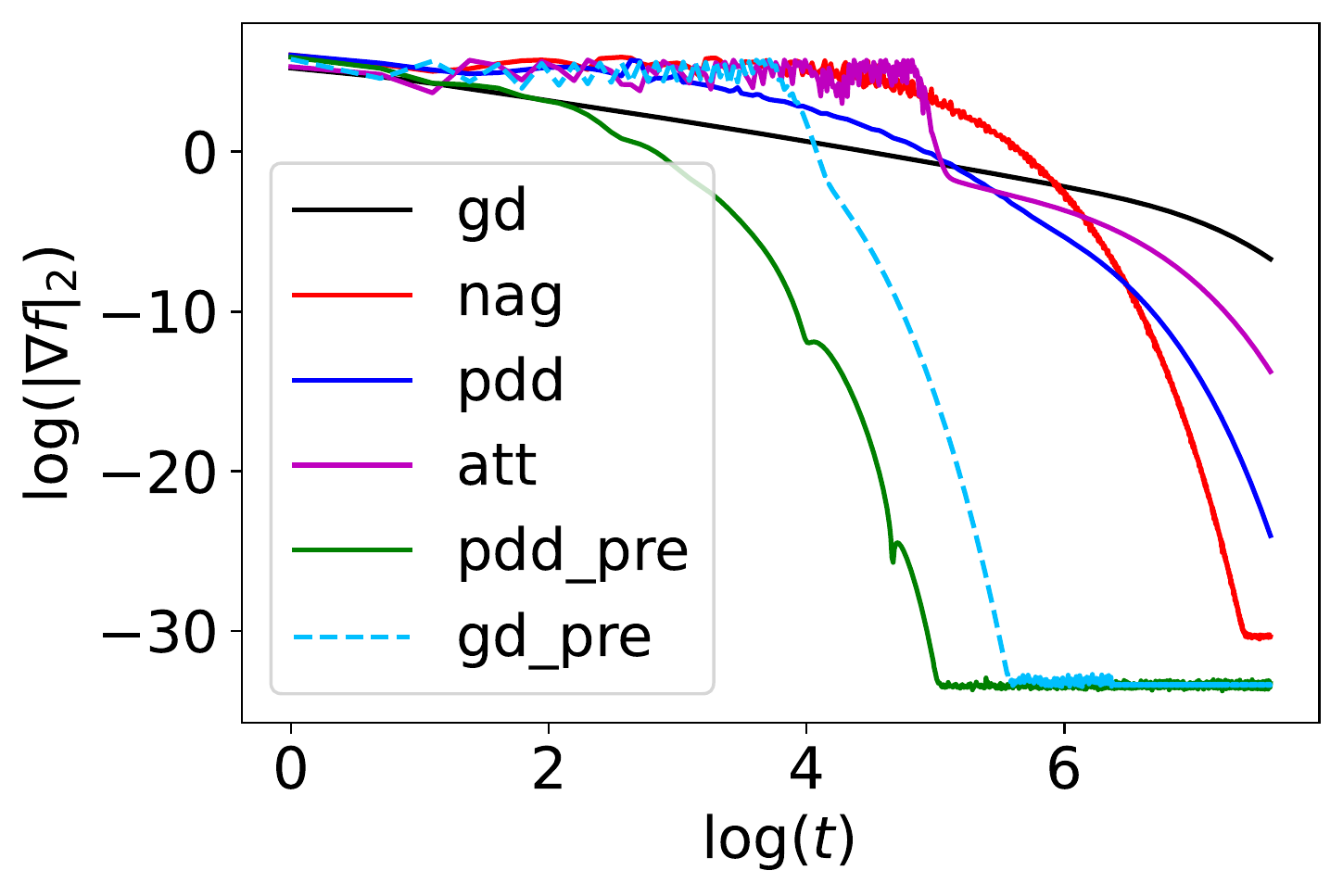}
  \caption{Regularized log-sum-exp}
  \label{fig:convex1}
\end{subfigure}%
\begin{subfigure}{.5\textwidth}
  \centering
  \includegraphics[width=\linewidth]{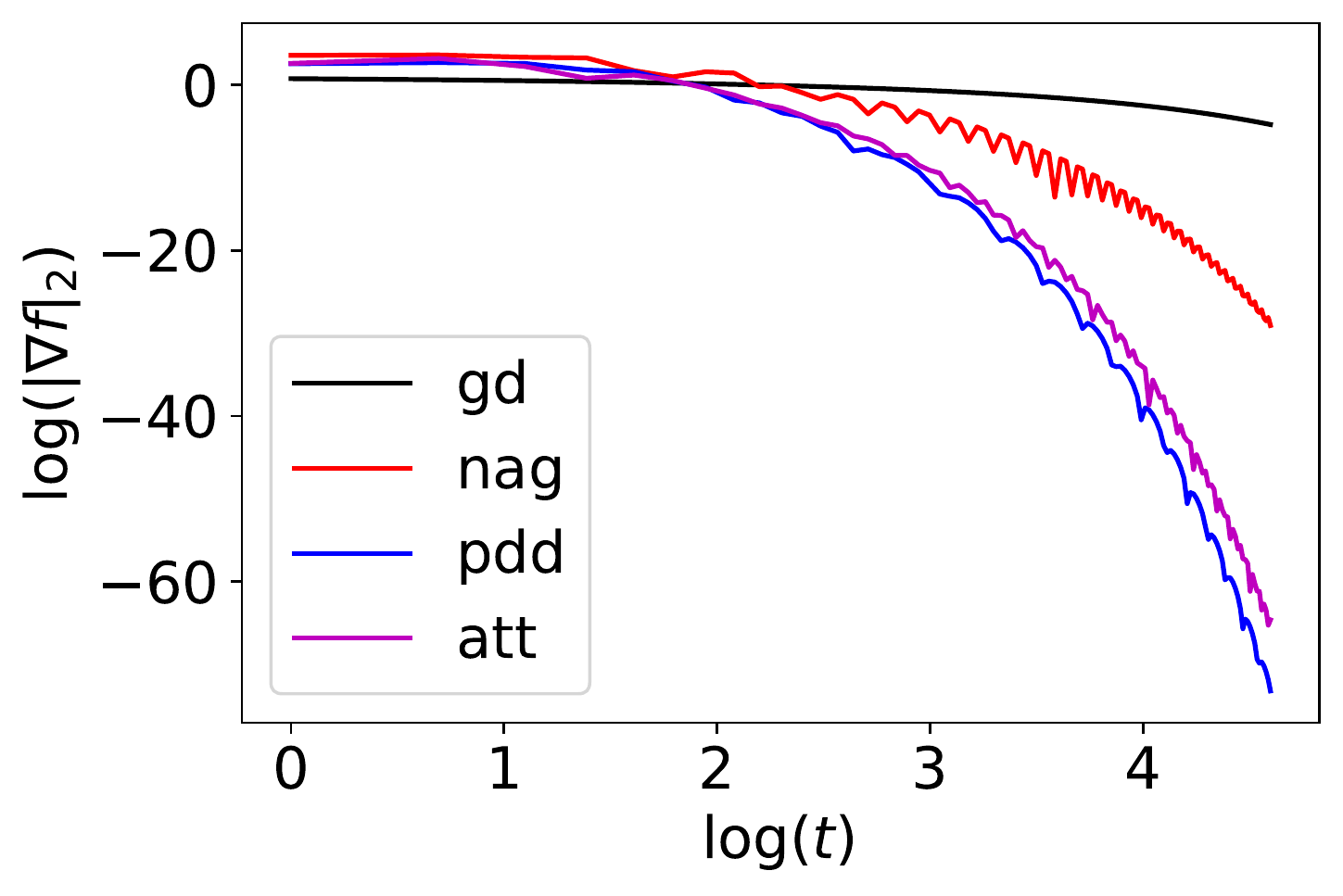}
  \caption{Qudratic minus cosine}
  \label{fig:convex2}
\end{subfigure}
\caption{Comparison of gradient descent, NAG, PDD, and IGAHD-SC (we use `att' as a shorthand for this method) on minimizing (a) the regularized log-sum-exp function and (b) the quadratic minus cosine function. The y-axis represents the 2-norm of the gradient of the objective function on a logarithmic scale. The x-axis represents the number of iterations on a logarithmic scale.}
\label{fig:convex}
\end{figure}
\subsection{Rosenbrock function}
\subsubsection{2-dimension}
The 2-dimensional Rosenbrock function is defined as 
$$ f(x,y) = (a-x)^2 + b(y-x^2)^2\,,
$$
where a common choice of parameters is $a = 1$, $b = 100$. This is a non-convex function with a global minimum of $(x^*,y^*) = (a,a^2)$. The global minimum is inside a long, narrow, parabolic-shaped flat valley. To find the valley is trivial. To converge to the global minimum, however, is difficult. We compare the performance of gradient descent, NAG, PDD with $\mC(\vx) = \sI$ and IGAHD (inertia gradient algorithm with Hessian damping) by \citet{attouch2020first} when minimizing the Rosenbrock function starting from $(-3,-4)$. The stepsize of gradient descent is $\tau_{\textrm{gd}} = 0.0002$. The stepsize of NAG is $\tau_{\textrm{nag}} = 0.0002$, $\beta_{\textrm{nag}} = 0.9$. The parameters of PDD are $\tau_{\textrm{pdd}} = \sigma_{\textrm{pdd}}= 0.005$, $\varepsilon = 1$, $\omega = 1$, $A = 5$. The stepsize of the PDD method is larger than $\tau_{\textrm{gd}}$ and $\tau_{\textrm{nag}}$ because gradient descent and NAG do not allow larger stepsizes for convergence. For IGAHD (`att'), we choose $\tau_{\textrm{att}} = 0.00045$, $\alpha = 3$, $\beta^{(1)} = \sqrt{\tau_{\textrm{att}}}/14$. The convergence result and the optimization trajectories are shown in Fig.~\ref{fig:rosenbrock}. 
\begin{figure}
\centering
\begin{subfigure}{.5\textwidth}
  \centering
  \includegraphics[width=\linewidth]{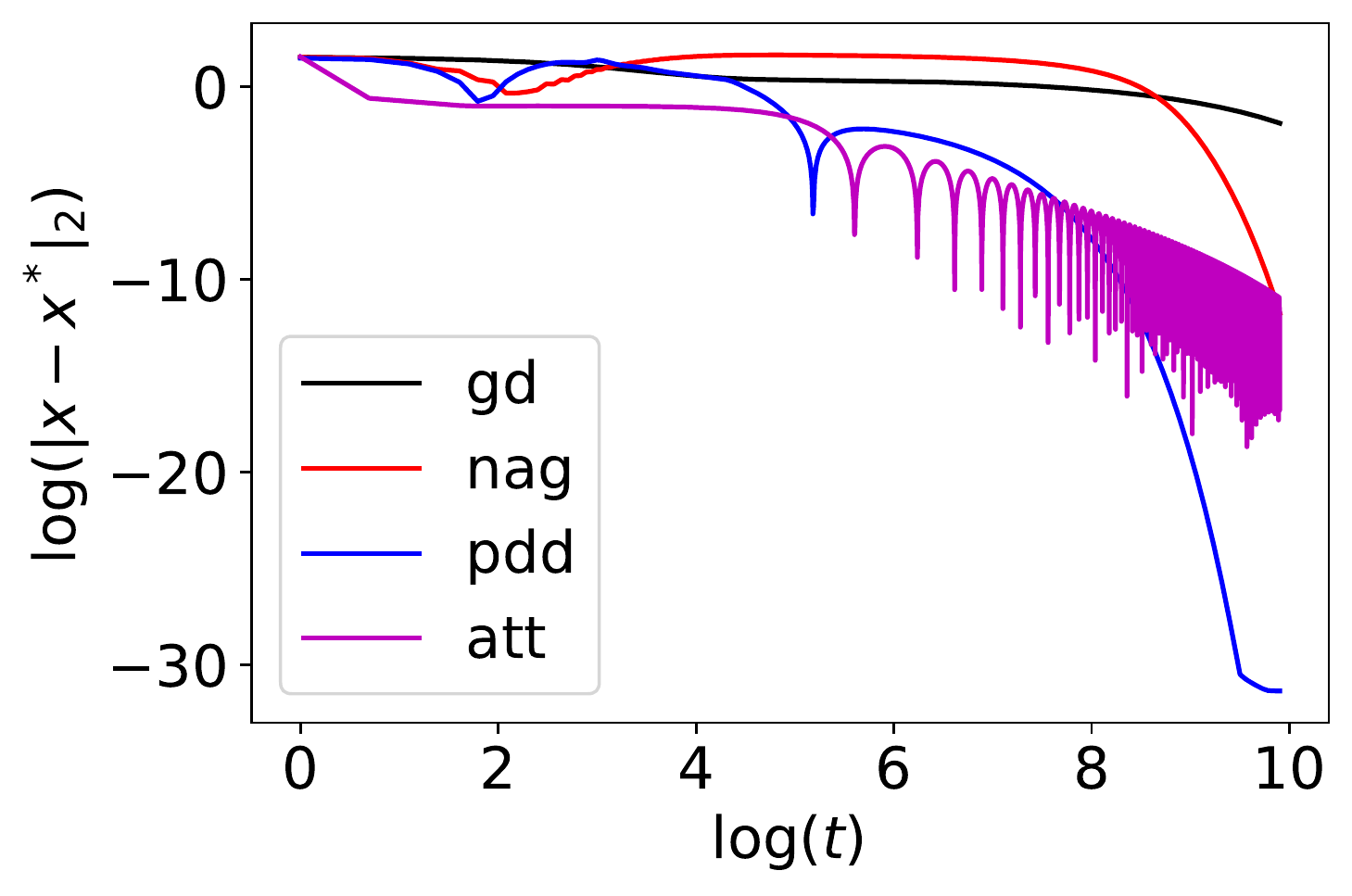}
  \caption{Convergence comparison}
  \label{fig:rosen1}
\end{subfigure}%
\begin{subfigure}{.5\textwidth}
  \centering
  \includegraphics[width=\linewidth]{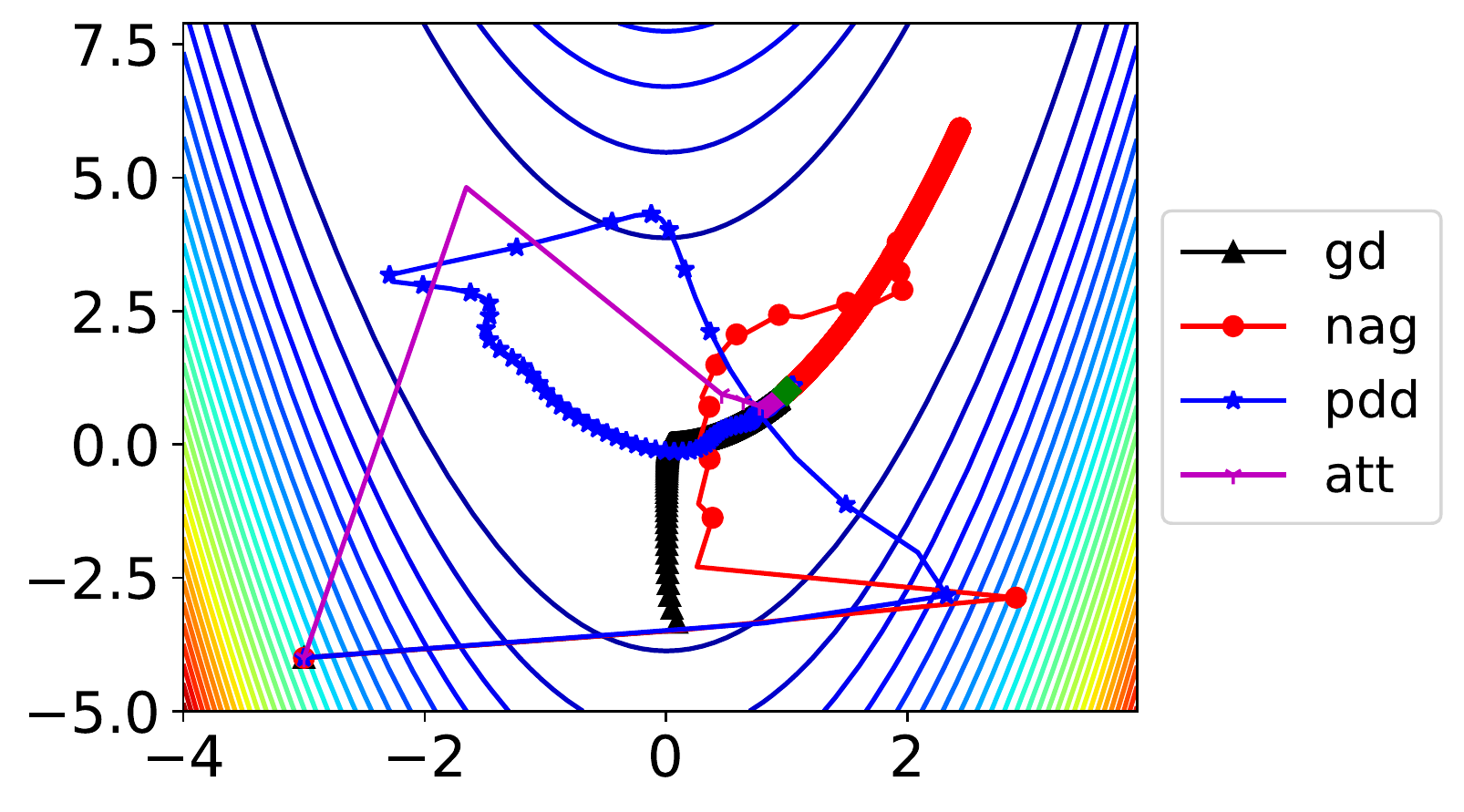}
  \caption{Optimization trajectories}
  \label{fig:rosen2}
\end{subfigure}
\caption{Minimizing the Rosenbrock function with gradient descent, NAG, PDD with $\mC(\vx) = \sI$ and IGAHD (`att'). The left panel shows the convergence speed of each method. The right panel shows the optimization trajectories of each method. }
\label{fig:rosenbrock}
\end{figure}
\subsubsection{N-dimension}
The $N$-dimensional coupled Rosenbrock function is defined as 
$$
f(\vx) = \sum_{i=1}^{N-1}\big((a-x_i)^2 + b(x_{i+1}-x_i^2)^2 \big)\,,
$$
 where we choose $a = 1$ and $b = 100$ as in the 2-dimensioal case and we set $N=100$. The global minimum is at $\vx^* = (1,1,\ldots,1)$.  Using the same stepsizes as in the 2-dimensional case, we compare the performance of the three algorithms starting from $\vx_0 = (0,\ldots,0)$. The stepsize of gradient descent is $\tau_{\textrm{gd}} = 0.001$. The stepsize of NAG is $\tau_{\textrm{nag}} = 0.0008$, $\beta = 0.95$. The parameters of PDD are $\tau_{\textrm{pdd}} = \sigma_{\textrm{pdd}}= 0.01$, $\varepsilon = 0.5$, $\omega = 1$, $A = 5$. The stepsize of the PDD method is larger than $\tau_{\textrm{gd}}$ and $\tau_{\textrm{nag}}$ because gradient descent and NAG do not allow larger stepsizes for convergence. For IGAHD (`att'), we choose $\tau_{\textrm{att}} = 0.0002$, $\alpha = 3$, $\beta^{(1)} = 2* \sqrt{\tau_{\textrm{att}}}$. The result is summarized in Fig.~\ref{fig:rosen_nd}
\begin{figure}
    \centering
    \includegraphics[width=0.5\textwidth]{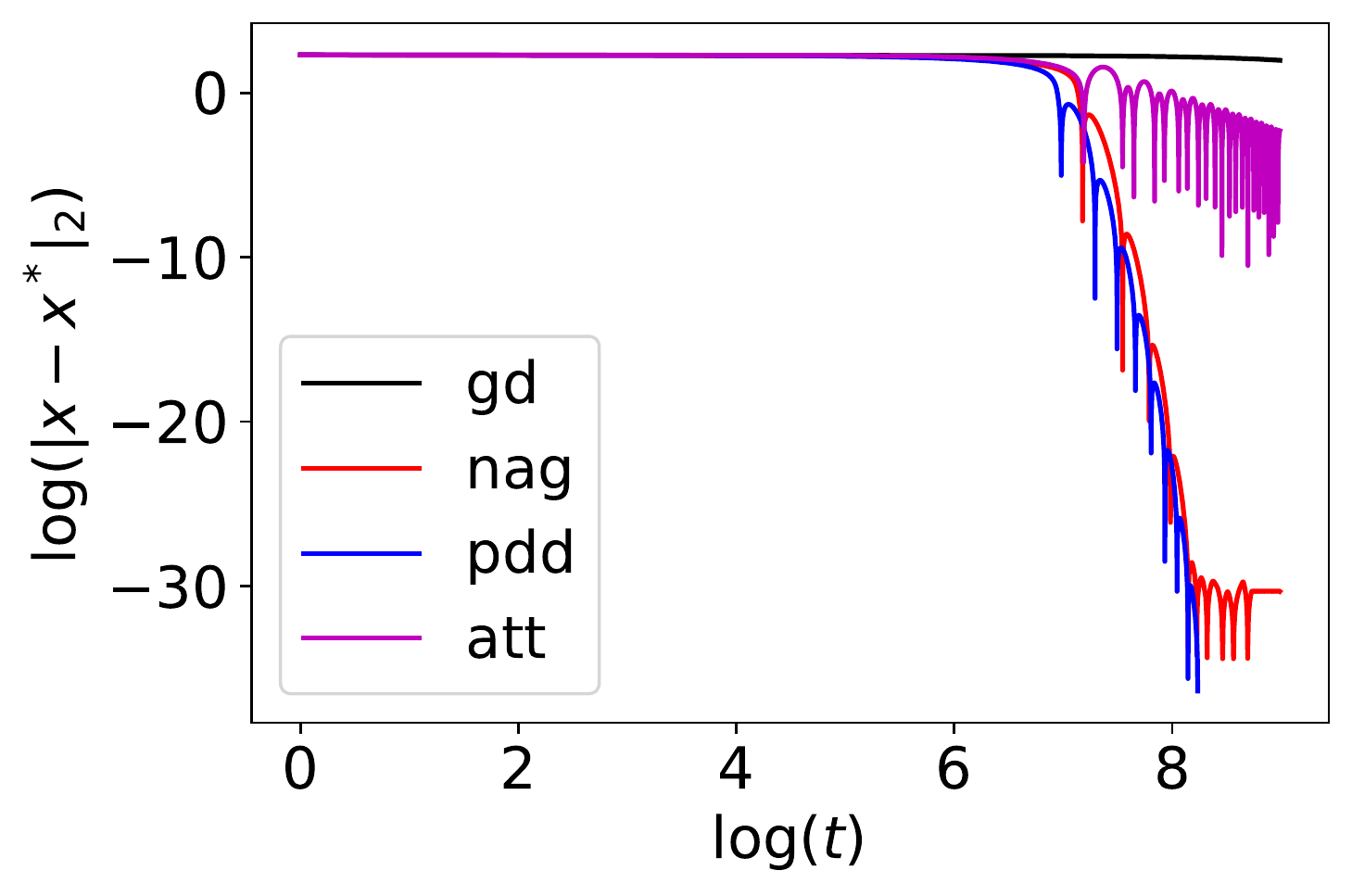}
    \caption{Comparison of gradient descent, NAG, PDD with $\mC(\vx) = \sI$ and IGAHD (`att') on minimizing the 100-dimensional coupled Rosenbrock function. The y-axis represents the distance between the current iterate and the global minimum on a logarithmic scale. The x-axis represents the number of iterations on a logarithmic scale. }
    \label{fig:rosen_nd}
\end{figure}
\subsection{Ackley function}
We consider minimizing the two-dimensional Ackley function given by 
$$ f(x,y) = -20 \exp\big(-0.2\sqrt{0.5(x^2 + y^2)}\big) - \exp \big[0.5 \big(\cos(2\pi x) + \cos(2\pi y)\big)\big] + \mathrm{e} + 20 \,,
$$
which has many local minima. The unique global minimum is located at $(x^*,y^*) = (0,0)$. We compare the performance of gradient descent, NAG, PDD, and IGAHD (`att') for minimizing the two-dimensional Ackley function starting from $(x_0,y_0) = (2.5,4)$. The stepsize of gradient descent is $\tau_{\textrm{gd}} = 0.002$. The stepsize of NAG is $\tau_{\textrm{nag}} = 0.002$, $\beta_{\textrm{nag}} = 0.9$. The parameters of PDD are $\tau_{\textrm{pdd}} = \sigma_{\textrm{pdd}}= 0.002$, $\varepsilon = 1$, $\omega = 1$, $A = 1$. For IGAHD (`att'), we choose $\tau_{\textrm{att}} = 0.01$, $\alpha = 3$, $\beta^{(1)} = 2* \sqrt{\tau_{\textrm{att}}}$. The results are summarized in Fig.~\ref{fig:ackley}. 
\begin{figure}
\centering
\begin{subfigure}{.5\textwidth}
  \centering
  \includegraphics[width=\linewidth]{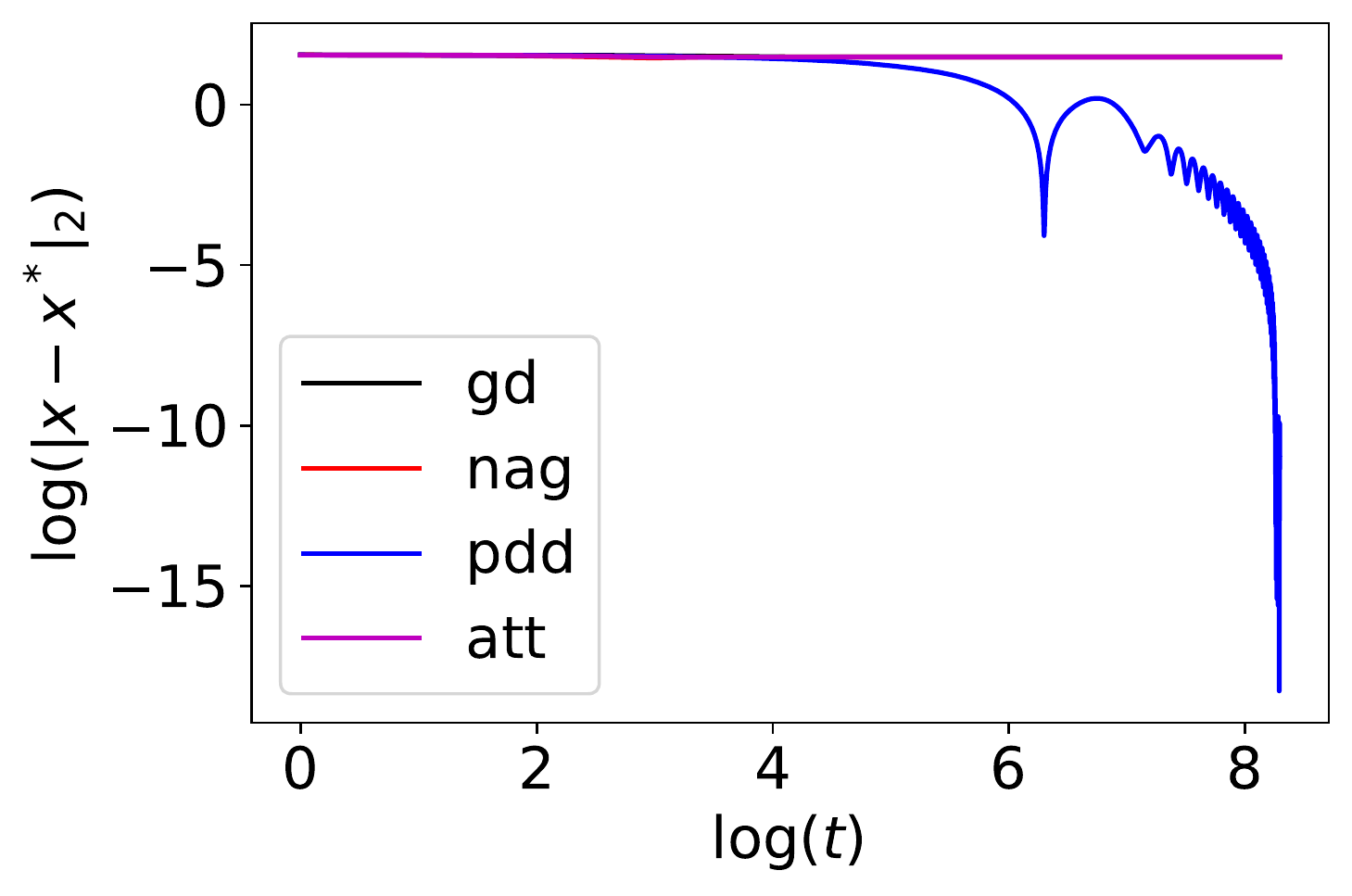}
  \caption{Convergence comparison}
  \label{fig:sub1}
\end{subfigure}%
\begin{subfigure}{.5\textwidth}
  \centering
  \includegraphics[width=\linewidth]{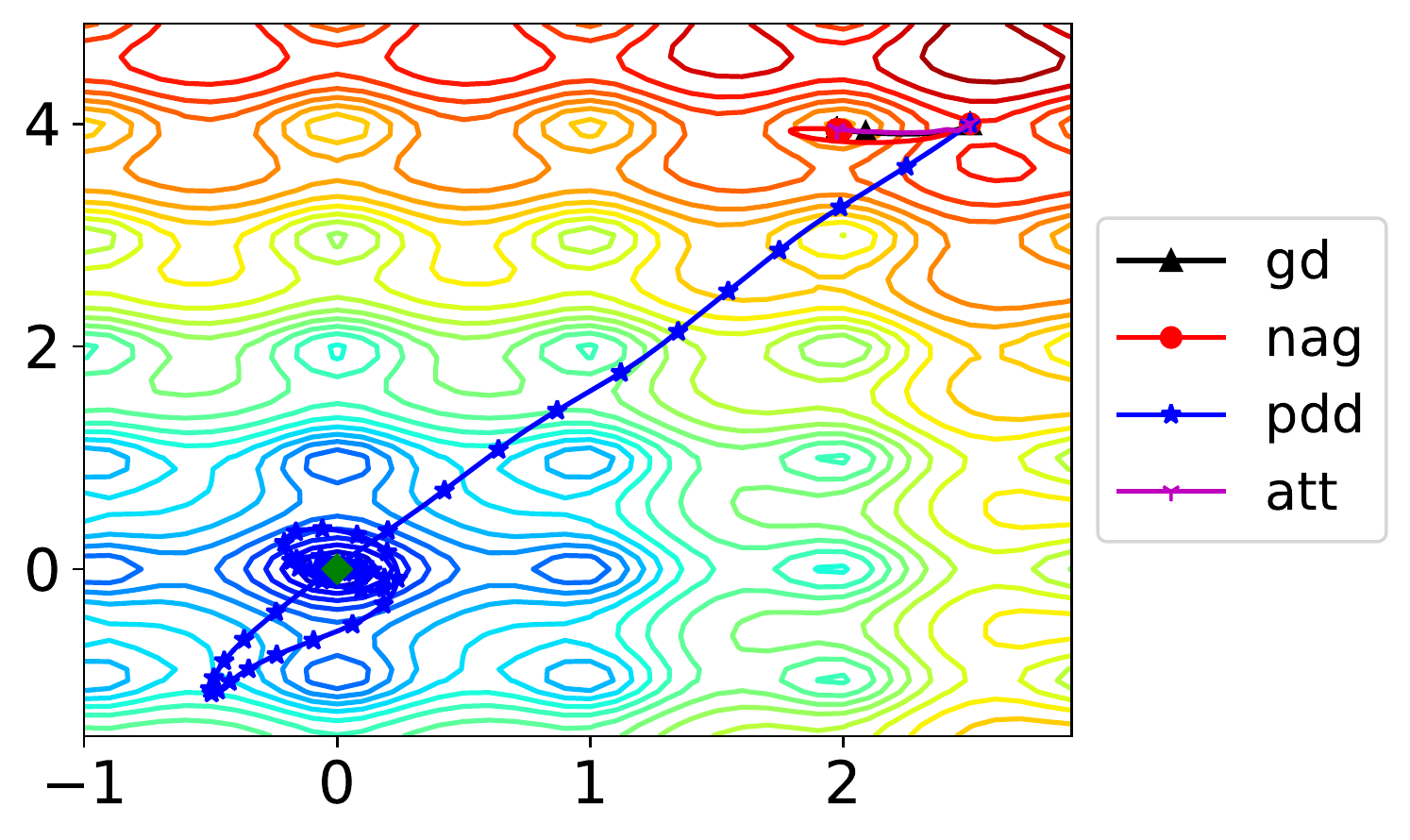}
  \caption{Optimization trajectories}
  \label{fig:sub2}
\end{subfigure}
\caption{Minimizing the Ackley function with gradient descent, NAG, PDD and IGAHD (`att'). The left panel shows the convergence speed of each method. The right panel shows the optimization trajectories of each method. }
\label{fig:ackley}
\end{figure}

\begin{remark}
    We remark that our algorithm has no stochasticity. It will not always converge to the global minimum for non-convex functions in general. For example, it will not converge for the Griewank, Drop-Wave, and Rastrigin functions. 
\end{remark}

\subsection{Neural Networks training}
\subsubsection{MNIST with Two-layer neural network}
We consider the classification problem using the MNIST handwritten digit data set with a two-layer neural network. The neural network has an input layer of size $784=28\times 28$, a hidden layer of size $32$ followed by another hidden layer of size $32$, and an output layer of size $10$. We use ReLU activation function across the layers, and the loss is evaluated using the cross-entropy loss. We use a batch size of 200 for all the algorithms. The stepsize of gradient descent is $\tau_{\textrm{gd}} = 0.001$. The stepsize of NAG is $\tau_{\textrm{nag}} = 0.001$, $\textrm{momentum} = 0.9$. The parameters of PDD are $\tau_{\textrm{pdd}} =  0.001$, $\sigma_{\textrm{pdd}}=5$, $\varepsilon = 0.005$, $\omega = 1$, $A = 1$. For IGAHD (`att'), we choose $\tau_{\textrm{att}} = 0.001$, $\alpha = 3$, $\beta^{(1)} = 0.01$. For Adam, we choose $\tau_{\textrm{adam}}=0.001$, $\beta_1 = 0.9$, $\beta_2=0.999$. 

\begin{table}[]
\centering
\begin{tabular}{|l|l|l|l|l|l|}
\hline
Algorithm & SGD                & NAG               & PDD &Adam &Att          \\ \hline
train loss  & 2.223 $\pm$ 0.034  & 0.964 $\pm$ 0.244  & \textbf{0.433 $\pm$ 0.270} &0.589 $\pm$ 0.282 & 0.591 $\pm$ 0.288 \\ \hline
test acc & 29.3 $\pm$ 8.3 \%  & 71.2 $\pm$ 9.4 \%  & \textbf{85.4 $\pm$ 10.4} \% &79.1 $\pm$ 11.3 \% &80.8 $\pm$ 11.4 \%  \\ \hline
\end{tabular}
\caption{Average training loss and test accuracy of different algorithms for MNIST handwritten digit recognition over 60 random seeds.}\label{table:mnist}
\end{table}

\begin{figure}
    \centering
    \includegraphics[width=\linewidth]{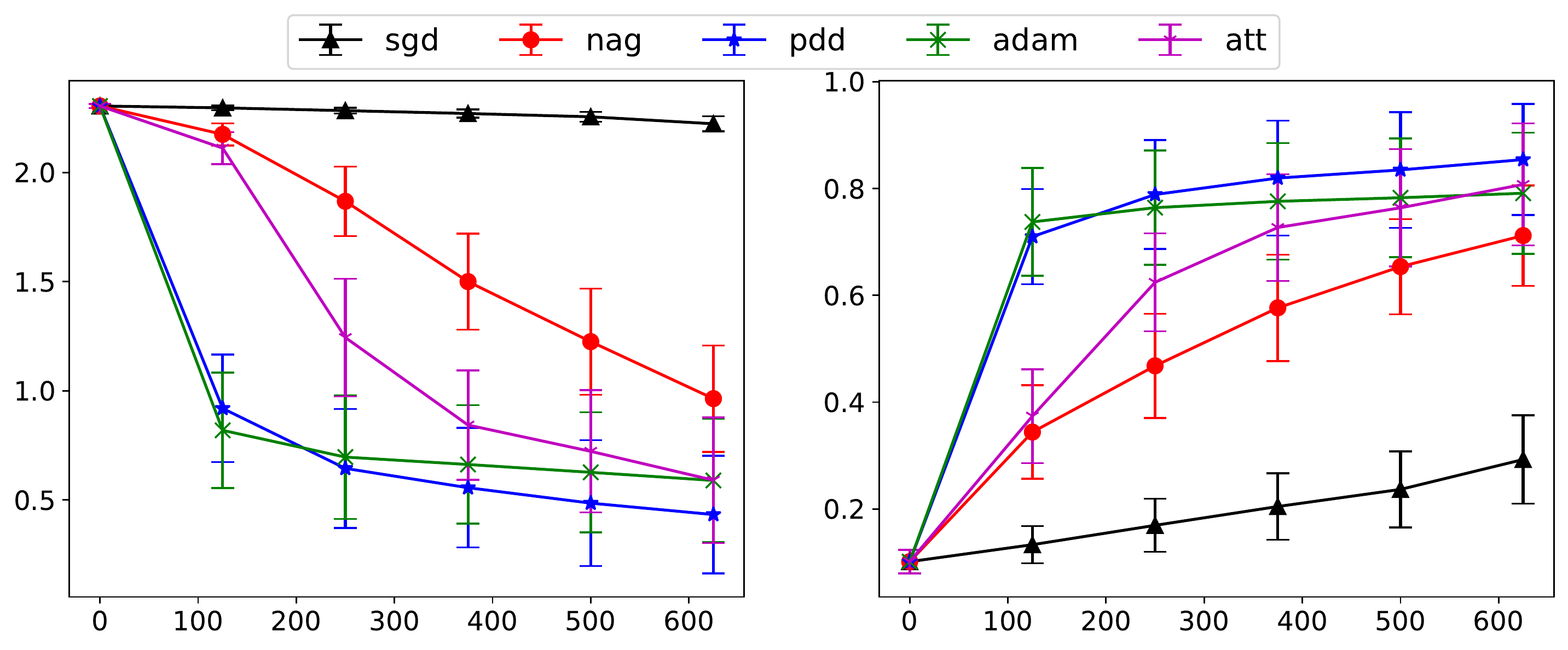}
    \caption{Training a two-layer neural network with the MNIST data set using gradient descent, NAG, PDD, Adam, and IGAHD (`att'). The left panel shows the convergence speed of training loss. The right panel shows the test accuracy of each method. The x-axis represents the number of iterations in terms of mini-batches.}
    \label{fig:MNIST}
\end{figure}

\subsubsection{CIFAR10 with CNN}
We train a convolutional neural network using the CIFAR10 datasets with SGD, Nesterov, PDD, Adam, and IGAHD (`Att'). The architecture of the network is described as follows. The network consists of two convolutional layers. The first convolutional layer has $32$ output channels, and the filter size is $3 \times 3$. The second convolutional layer has $64$ output channels, and the filter size is $4 \times 4$. Each convolutional layer is followed by a ReLU activation and then a $2 \times 2$ max-pooling layer. Lastly, we have $3$ fully connected layers of size $(64\cdot 4\cdot 4,120)$, $(120,84)$, and $(84,10)$. The loss is evaluated using the cross-entropy loss. The stepsize of gradient descent is $\tau_{\textrm{gd}} = 0.01$. The stepsize of NAG is $\tau_{\textrm{nag}} = 0.005$, $\textrm{momentum} = 0.9$. The parameters of PDD are $\tau_{\textrm{pdd}} =  0.005$, $\sigma_{\textrm{pdd}}=5$, $\varepsilon = 0.005$, $\omega = 1$, $A = 1$. For IGAHD (`att'), we choose $\tau_{\textrm{att}} = 0.005$, $\alpha = 3$, $\beta^{(1)} = 0.01$. For Adam, we choose $\tau_{\textrm{adam}}=0.005$, $\beta_1 = 0.9$, $\beta_2=0.999$.
\begin{figure}
    \centering
    \includegraphics[width=\linewidth]{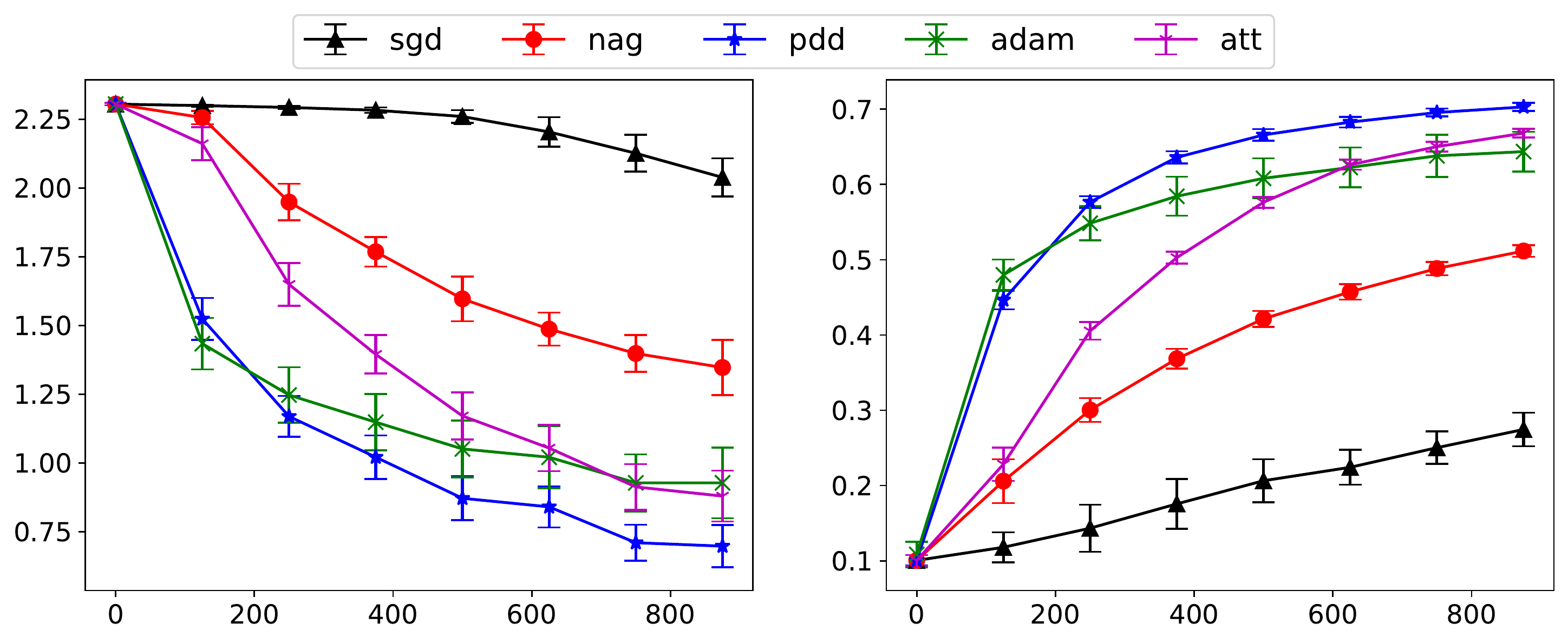}
    \caption{Training loss (left panel) and test accuracy (right panel) of a convolutional neural network on the CIFAR10 data set. The x-axis represents the number of iterations in terms of mini-batches. }
    \label{fig:CIFAR10}
\end{figure}
\begin{table}[]
\centering
\begin{tabular}{|l|l|l|l|l|l|}
\hline
Algorithm & SGD                & NAG               & PDD &Adam &Att          \\ \hline
train loss  & 2.038 $\pm$ 0.070 & 1.347 $\pm$ 0.100  & \textbf{0.697 $\pm$ 0.077} &0.927 $\pm$ 0.128 & 0.879 $\pm$ 0.092 \\ \hline
test acc & 27.5 $\pm$ 2.2 \%  & 51.2 $\pm$ 0.7 \%  & \textbf{70.3 $\pm$ 0.5} \% &64.4 $\pm$ 2.7 \% &66.8 $\pm$ 0.6 \%  \\ \hline
\end{tabular}
\caption{Average training loss and test accuracy of different algorithms for CIFAR10 data set over 60 random seeds.}\label{table:CIFAR10}
\end{table}




\section{Discussion}\label{section:discussion}
This paper presents primal-dual hybrid gradient algorithms for solving unconstrained optimization problems. We reformulate the optimality condition of the optimization problem as a saddle-point problem and then compute the proposed saddle-point problem by a preconditioned PDHG method. We present the geometric convergence analysis for the strongly convex objective functions. In numerical experiments, we demonstrate that the proposed method works efficiently in non-convex optimization problems, at least in some examples, such as  Rosenbrock and Ackley functions. In particular, it could efficiently train two-layer and convolution neural networks in supervised learning problems. 

So far, our convergence study is limited to strongly convex objective functions, not convex ones. Meanwhile, the choice of preconditioners and stepsizes are independent of time. We also have not discussed the optimal choices of parameters or general proximal operators in the updates of algorithms. These generalized choices of functions, parameters, and their convergence properties have been intensively studied in Nesterov accelerated gradient methods and Attouch's Hessian-driven damping methods. In future work, we shall explore the convergence property of PDHG methods for convex functions with time-dependent parameters. We also investigate the convergence of similar algorithms in scientific computing problems of implicit time updates of partial differential equations \citep{li2022controlling,li2023controlling,liu2023primal}. 

\noindent\textbf{Acknowledgement:} X. Zuo and S. Osher's work was partly supported by AFOSR MURI FP 9550-18-1-502 and ONR
grants:  N00014-20-1-2093 and N00014-20-1-2787. W. Li's work was supported by AFOSR MURI FP 9550-18-1-502, AFOSR YIP award No. FA9550-23-1-0087, and NSF RTG: 2038080.

\bibliography{main}
\newpage
\begin{appendices}
\section{Matrix lemma}
\begin{lemma}\label{lemma:matrix_CS}
Let $\mA, \mB, \mC \in \RR^n$ be real symmetric matrices that are simultaneously diagonalizable. Then for any $\vx,\vy \in \RR^n$, if 
$$
\lambda_{A,i} + \frac{|\lambda_{C,i}|}{2}\leq 0 \,
$$
$$
\lambda_{B,i} + \frac{|\lambda_{C,i}|}{2}\leq 0 \,
$$
for all $i$, where $\lambda_{A,i},\lambda_{B,i},\lambda_{C,i}$ are the $i$th eigenvalues of $\mA,\mB,\mC$ respectively in the same basis. Then 
$$
\vx^T \mA \vx + \vy^T \mB \vy + \vx^T \mC \vy \leq 0,
$$
for all $\vx,\vy \in \RR^n$. 
\end{lemma}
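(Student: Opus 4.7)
The plan is to diagonalize all three matrices simultaneously and reduce the inequality to a pointwise (scalar) statement, which then follows from a standard AM-GM style bound on the cross term.

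First I would invoke the hypothesis that $\mA,\mB,\mC$ are simultaneously diagonalizable to obtain a single orthogonal matrix $\mU$ such that $\mU^T \mA \mU$, $\mU^T \mB \mU$, and $\mU^T \mC \mU$ are all diagonal, with $i$-th diagonal entries $\lambda_{A,i}, \lambda_{B,i}, \lambda_{C,i}$ respectively (the enumeration is by the shared eigenbasis, matching the hypothesis). Setting $\vx' = \mU^T \vx$ and $\vy' = \mU^T \vy$ and using orthogonality, the quadratic form rewrites as
\begin{equation*}
\vx^T \mA \vx + \vy^T \mB \vy + \vx^T \mC \vy \;=\; \sum_{i=1}^n \Bigl( \lambda_{A,i} (x'_i)^2 + \lambda_{B,i} (y'_i)^2 + \lambda_{C,i}\, x'_i y'_i \Bigr).
\end{equation*}

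Next I would show that each summand is nonpositive. For fixed $i$, write $a = x'_i$, $b = y'_i$. The bound $|ab| \leq \tfrac{1}{2}(a^2 + b^2)$ (equivalent to $(|a|-|b|)^2 \geq 0$) gives
\begin{equation*}
\lambda_{C,i}\, ab \;\leq\; |\lambda_{C,i}|\,|ab| \;\leq\; \tfrac{|\lambda_{C,i}|}{2}\,(a^2 + b^2).
\end{equation*}
Combining this with the two hypotheses $\lambda_{A,i} + |\lambda_{C,i}|/2 \leq 0$ and $\lambda_{B,i} + |\lambda_{C,i}|/2 \leq 0$ yields
\begin{equation*}
\lambda_{A,i} a^2 + \lambda_{B,i} b^2 + \lambda_{C,i}\, ab \;\leq\; \Bigl(\lambda_{A,i} + \tfrac{|\lambda_{C,i}|}{2}\Bigr) a^2 + \Bigl(\lambda_{B,i} + \tfrac{|\lambda_{C,i}|}{2}\Bigr) b^2 \;\leq\; 0.
\end{equation*}
Summing over $i$ gives the desired inequality.

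There is no serious obstacle here: the simultaneous diagonalizability assumption does all the heavy lifting by decoupling the quadratic form into independent $2\times 2$ blocks, and then the cross term is absorbed by the classical inequality $|ab|\leq (a^2+b^2)/2$. The only mild care needed is to be explicit that the ordering of eigenvalues in the hypothesis matches the common eigenbasis, which is exactly what ``in the same basis'' in the lemma statement guarantees.
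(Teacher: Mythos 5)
Your proof is correct and follows essentially the same route as the paper's: both conjugate by the common orthogonal eigenbasis to decouple the quadratic form coordinatewise, then absorb the cross term via $|ab|\leq \tfrac{1}{2}(a^2+b^2)$ and apply the two eigenvalue hypotheses. Your write-up is in fact slightly more careful about the change of variables (using $\mU^T\vx$ explicitly), but the argument is identical in substance.
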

\begin{proof}
    Let $\vx,\vy \in \RR^n$. By our assumption, there exists $\mQ$ unitary such that $\mA,\mB,\mC$ are simultaneously diagonalizable by $\mQ$. Set $\tilde{\vx} = \mQ \vx$ and $\tilde{\vy} = \mQ \vy$. Then we can compute 
    \begin{align*}
        \vx^T \mA \vx + \vy^T \mB \vy + \vx^T \mC \vy &= \sum_{i=1}^n \tilde{x}_i^2 \lambda_{A,i} + \tilde{y}_i^2 \lambda_{B,i} + \tilde{x}_i \lambda_{C,i} \tilde{y}_i \\ 
        &\leq \sum_{i=1}^n \tilde{x}_i^2 \big(\lambda_{A,i}+\frac{|\lambda_{C,i}|}{2}\big) + \tilde{y}_i^2\big( \lambda_{B,i}+\frac{|\lambda_{C,i}|}{2}\big)\\
        &\leq 0 \,,
    \end{align*}
    where the first inequality follows from $\alpha x y \leq (x^2+y^2)|\alpha|/2$ for any $\alpha,x,y\in\RR$. 
\end{proof}
\section{Proof of Theorem \ref{prop:convergence_rate_original_pdd}}\label{appendix:proof_proposition_rate}
\subsection{Part (a)}
We have the following system of ODE: 
\begin{equation}\label{eq:ode_matrix}
    \begin{pmatrix}
        \dot{\vx} \\ \dot{\vp} 
    \end{pmatrix} = \begin{pmatrix}
        -\gamma \mB \mQ\mA\mQ &-\mB\mQ(\sI-\gamma \varepsilon \mA ) \\
         \mA\mQ & -\varepsilon \mA 
    \end{pmatrix} \begin{pmatrix}
        \vx \\ \vp 
    \end{pmatrix}\,.
\end{equation}
Let us compute the eigenvalues of the above system. Let $\alpha$ be an eigenvalue, then $\alpha$ satisfies 
\begin{align*}
    \mathrm{det} \begin{pmatrix}
          -\gamma \mB \mQ\mA\mQ - \alpha \sI &-\mB\mQ(\sI-\gamma \varepsilon \mA ) \\
         \mA\mQ & -\varepsilon \mA -\alpha \sI 
    \end{pmatrix} &= 0 \\
    \mathrm{det}\big( (-\gamma \mB \mQ\mA\mQ - \alpha \sI)(-\varepsilon \mA -\alpha \sI ) + \mB\mQ(\sI-\gamma \varepsilon \mA) \mA\mQ \big) &= 0\\
    \mathrm{det}\big( \alpha^2 \sI + \alpha (\varepsilon \mA + \gamma \mB \mQ \mA \mQ) + \gamma\varepsilon\mB\mQ\mA\mQ\mA + \mB\mQ\mA\mQ - \gamma \varepsilon \mB\mQ\mA\mA \mQ  \big) &= 0 \\
    \mathrm{det}\big( \alpha^2 \sI + \alpha (\varepsilon \mA + \gamma \mB \mQ \mA \mQ) + \mB\mQ\mA\mQ  \big) &= 0 \,.
\end{align*}
The late equality is because $\mA$ commutes with $\mQ$. 
We assume that $\mA$ and $\mB\mQ\mA\mQ$ are simultaneously diagonalizable. Thus, 
\begin{align*}
    0&=\alpha^2 + \alpha (\varepsilon a_i+ \gamma \mu_i) + \mu_i \,,  \\
    \alpha &= \frac{-\varepsilon a_i -\gamma\mu_i \pm \sqrt{(\varepsilon+\gamma \mu_i)^2 - 4\mu_i}}{2}\,.
\end{align*}
If $\gamma >0$ and $\varepsilon\geq 0$, then the real part of the eigenvalues are negative, and the system will converge. 
The convergence rate depends on the largest real part of the eigenvalues, which is 
$$
\max_i \frac{1}{2}\big[-\gamma \mu_i - \varepsilon a_i + \Re\big(\sqrt{(\gamma\mu_i + \varepsilon)^2-4\mu_i} \big)   \big]\,.
$$
\subsection{Part (c)}
When $\gamma = \varepsilon = 0$, we see that $\alpha$ is purely imaginary. Thus solutions to \eqref{eq:ode_matrix} will be oscillatory and will not converge. 

\subsection{Part (b)}
Let us define 
$$
g(\gamma) = \max_i\big\{ \frac{\mu_i\big(-\gamma + \Re\big(\sqrt{\gamma^2 - 4/\mu_i}\big)\big)}{2} \big\} \,.
$$
Essentially, we would like to find $\gamma^* = \argmin_{\gamma} g(\gamma)$. We then define  
\begin{align*}
    \gamma(\mu) &:= \argmin_{\gamma} \frac{\mu\big(-\gamma + \Re\big(\sqrt{\gamma^2 - 4/\mu}\big)\big)}{2} \\
    &= \frac{2}{\sqrt{\mu}} \,.
\end{align*}
Observe that if $\gamma \geq 2/\sqrt{\mu_n}$, then $\gamma^2-4/\mu_i \geq 0$ for all $i$. Thus 
\begin{align*}
    g(\gamma) &= \max_i\big\{ \frac{\mu_i\big(-\gamma + \sqrt{\gamma^2 - 4/\mu_i}\big)}{2} \big\}\,.
\end{align*}
For $\mu \in[\mu_n,\mu_1]$ and $\gamma \geq 2/\sqrt{\mu_n}$, one can check that the function $\mu\big(-\gamma + \sqrt{\gamma^2 - 4/\mu}\big)$ is increasing in $\mu$ by computing the partial derivative with respect to $\mu$. Then we get 
$$
g(\gamma) = \frac{\mu_1\big(-\gamma + \sqrt{\gamma^2 - 4/\mu_1}\big)}{2} \geq g(2/\sqrt{\mu_n})=\sqrt{\mu_1}(\sqrt{\kappa-1}-\sqrt{\kappa}) \approx -\sqrt{\mu_n}/2 \,,
$$
where $\kappa = \mu_1/\mu_n >1$. The last approximation is valid for $\mu_1/\mu_n \gg 1$. This shows that $\gamma^* \leq 2/\sqrt{\mu_n}$. Similarly, if $\gamma \leq 2/\sqrt{\mu_1}$, then $\gamma^2 - 4/\mu_i \leq 0$ for all $i$. Thus
\begin{align*}
    g(\gamma) &= \max_i\big\{ \frac{-\mu_i \gamma }{2} \big\} \\
    &= \frac{-\mu_n \gamma}{2} \\
    &\geq -\frac{\mu_n}{\sqrt{\mu_1}} = g(2/\sqrt{\mu_1})\,. 
\end{align*}
This shows that $\gamma^* \geq 2/\sqrt{\mu_1}$. Combining with our previous observation, we get $\gamma^* \in [2/\sqrt{\mu_1},2/\sqrt{\mu_n}]$. Now let us fix some $\gamma' \in [2/\sqrt{\mu_1},2/\sqrt{\mu_n}]$. Let $j = \inf \{i: 1\leq i\leq n, \gamma'^2 -4/\mu_i \leq 0 \}$. By our assumption on $\gamma'$, we know that $1<j<n$. Now for $1\leq i \leq j-1$, we have 
$$
\frac{\mu_i\big(-\gamma' + \Re\big(\sqrt{\gamma'^2 - 4/\mu_i}\big)\big)}{2} = \frac{\mu_i\big(-\gamma' + \sqrt{\gamma'^2 - 4/\mu_i}\big)}{2} \leq  \frac{\mu_1\big(-\gamma' + \sqrt{\gamma'^2 - 4/\mu_1}\big)}{2} \,.
$$
And for $j\leq k \leq n$, we have 
$$
\frac{\mu_k\big(-\gamma' + \Re\big(\sqrt{\gamma'^2 - 4/\mu_k}\big)\big)}{2} = \frac{-\mu_k\gamma' }{2} \leq  \frac{-\mu_n \gamma' }{2} \,.
$$
It is thus clear that for $\gamma'\in [2/\sqrt{\mu_1},2/\sqrt{\mu_n}]$, 
\begin{align*}
    g(\gamma') &= \max\big\{ \frac{\mu_1\big(-\gamma' + \sqrt{\gamma'^2 - 4/\mu_1}\big)}{2}, \frac{-\mu_n \gamma' }{2} \big\} \,.
\end{align*}
It is straightforward to calculate that for $\gamma \in [\frac{2}{\sqrt{\mu_1}},\frac{2\sqrt{\mu_1}}{\sqrt{\mu_n(2\mu_1-\mu_n)}}]$, we have 
$$
\frac{-\mu_n \gamma }{2} \geq \frac{\mu_1\big(-\gamma + \sqrt{\gamma^2 - 4/\mu_1}\big)}{2}\,.
$$
So 
$$
g(\gamma) = \frac{-\mu_n \gamma }{2} \geq g(\frac{2\sqrt{\mu_1}}{\sqrt{\mu_n(2\mu_1-\mu_n)}}) = \frac{-\sqrt{\mu_n}}{\sqrt{2-\frac{1}{\kappa}}}\,. 
$$
And for $\gamma \in [\frac{2\sqrt{\mu_1}}{\sqrt{\mu_n(2\mu_1-\mu_n)}},2/\sqrt{\mu_n}] $ we have 
$$
\frac{-\mu_n \gamma }{2} \leq \frac{\mu_1\big(-\gamma + \sqrt{\gamma^2 - 4/\mu_1}\big)}{2}\,.
$$
This implies 
$$
g(\gamma) = \frac{\mu_1\big(-\gamma + \sqrt{\gamma^2 - 4/\mu_1}\big)}{2} \geq g(\frac{2\sqrt{\mu_1}}{\sqrt{\mu_n(2\mu_1-\mu_n)}}) = \frac{-\sqrt{\mu_n}}{\sqrt{2-\frac{1}{\kappa}}}\,. 
$$
This shows $\gamma^* = \frac{2\sqrt{\mu_1}}{\sqrt{\mu_n(2\mu_1-\mu_n)}}$. 
\subsection{Part (d)}
Define $\Delta_{\gamma}(\mu,\varepsilon) = (\gamma \mu+\varepsilon)^2-4\mu$. Also define $g_{\gamma}(\mu)= 2\sqrt{\mu}-\gamma \mu$. Then for $\mu\geq 0$, we have $\Delta_{\gamma}(\mu,\varepsilon) \leq 0 $ if and only if $\varepsilon \leq g_{\gamma}(\mu)$. Note that $g_{\gamma}'(\mu) = \frac{1}{\sqrt{\mu}}-\gamma \geq 0 $ for $\mu \leq \mu_1$ if $\gamma \leq \frac{1}{\sqrt{\mu_1}}$. Then $\Delta_{\gamma}(\mu,\varepsilon) \leq 0$ for all $\mu \leq \mu_1$ if $\gamma \leq \frac{1}{\sqrt{\mu_1}}$ and $\varepsilon \leq g_{\gamma}(\mu_n)$. In particular, $\Delta_{\gamma}(\mu,\varepsilon) \leq 0$ for all $\mu \leq \mu_1$ if $\varepsilon = g_{\gamma}(\mu') $ for some $\mu' \leq \mu_n$. We have  
\begin{align*}
    \alpha&=\max_i \frac{1}{2}\big[-\gamma \mu_i - \varepsilon + \Re\big(\sqrt{(\gamma\mu_i + \varepsilon)^2-4\mu_i} \big)   \big] \\
    &= \max_i \frac{1}{2}\big[-\gamma \mu_i - \varepsilon  \big] \\
    &= \max_i \frac{1}{2}\big[-\gamma \mu_i - 2 \sqrt{\mu'}+\gamma \mu'  \big] \\
    &= -\sqrt{\mu'} - \frac{\gamma(\mu_n - \mu')}{2}.
\end{align*}

\section{Proof of Proposition \ref{prop:pdd_ode_regu} }\label{appendix:proof_pdd_ode_regu}
We directly compute 
\begin{align} 
    \ddot{\vx} &= -\mC\big((\sI-\gamma \varepsilon \mA) \dot{\vp} + \gamma \mA \nabla^2 f(\vx) \dot{\vx}\big) -\dot{\mC}\big((\sI-\gamma \varepsilon \mA )\vp + \gamma \mA\nabla f(\vx) \big)   \nonumber \\
    &= -\mC\big((\sI-\gamma \varepsilon \mA ) (\mA \nabla f(\vx) - \varepsilon \mA   \vp)   + \gamma \mA \nabla^2 f(\vx) \dot{\vx}\big) \nonumber \\
    &\qquad-\dot{\mC}\big((\sI-\gamma \varepsilon \mA  )\vp + \gamma \mA\nabla f(\vx) \big)   \nonumber \\
    &=-\mC\big[(\sI-\gamma \varepsilon \mA  ) \mA \nabla f(\vx) + \varepsilon \mA   (\mC^{-1}\dot{\vx} + \gamma \mA \nabla f(\vx))  + \gamma \mA\nabla^2 f(\vx) \dot{\vx} \big] +\dot{\mC} \mC^{-1}\dot{\vx} \nonumber \\
    &= -\mC\big[\mA \nabla f(\vx) + \varepsilon \mA  \mC^{-1}\dot{\vx}  + \gamma \mA\nabla^2 f(\vx) \dot{\vx}  \big] +\dot{\mC} \mC^{-1}\dot{\vx} \,.\nonumber 
\end{align}

\end{appendices}

\end{document}